\newtheorem{thm}{Theorem}[section]
\newtheorem{prop}[thm]{Proposition}
\newtheorem{lem}[thm]{Lemma}
\newtheorem{cor}[thm]{Corollary}
\newtheorem{con}[thm]{Conjecture}
\theoremstyle{definition}
\newtheorem{defn}[thm]{Definition}
\theoremstyle{remark}
\newtheorem{rmk}[thm]{Remark}
\newcommand{\Sv}{\Sigma_\varphi}
\newcommand{\Spsi}{\Sigma_\psi}
\newcommand{\Sd}{\Sigma_\delta}
\newcommand{\Sdd}{\Sigma_{\delta'}}
\newcommand{\gmk}{\gamma_{\varphi_-^k}}
\newcommand{\gpk}{\gamma_{\varphi_+^k}}
\date{\today}
\author{\textsc{Jullian} Yann}
\title{An algorithm to identify automorphisms which arise from self-induced interval exchange transformations}
\begin{document}
\pagenumbering{arabic}
\begin{center}
\vbox{
	\vspace*{4.7cm}
	{\LARGE An algorithm to identify automorphisms which arise from self-induced interval exchange transformations}

	\vspace*{0.5cm}

	{\large Yann {\scshape Jullian}}\\

	\vspace*{0.8cm}

	\begin{abstract}
	We give an algorithm to determine if the dynamical system generated by a positive automorphism
	of the free group can also be generated by a self-induced interval exchange transformation.
	The algorithm effectively yields the interval exchange transformation in case of success.
	\end{abstract}

}
\end{center}

\setcounter{tocdepth}{2} 
\tableofcontents{}

\section*{Introduction}

This article deals with the problem of the geometric representation of the symbolic systems induced by free group automorphisms (see section \ref{subsec:attsub}).
It is now well known (\cite{GJLL}, \cite{CHL09}, \cite{Cou}, \cite{CH}) that these can be represented
by systems of partial isometries on $\mathds{R}$-trees (geodesic and $0$-hyperbolic metric spaces). In \cite{CH},
the authors study fully irreducible automorphisms, and explain the representation of their dynamics depends on an attribute of
the automorphism called the index (\cite{GJLL}, \cite{Jul}).
This index depends on the fixed subgroup and the infinite fixed points of the automorphism, and determines whether
the dynamics of the automorphism is represented by a system of partial isometries on a finite union of finite (with respect to
the number of points with degree at least $3$) trees (this includes intervals), a finite union of non-finite trees
(an example is given in \cite{Jul0}),
or a Cantor set whose convex hull is a finite (an example is given in \cite{BK}) or non-finite tree.

If we fix a basis $A_N$ of the free group $F_N$ with $N\ge 2$ generators, the free group is seen as the
set of reduced (two adjacent letters cannot be the inverse of each other) words with letters in $A_N$ or $A_N^{-1}$.
In this article, we consider $A_N$-positive primitive automorphisms;
an automorphism $\varphi$ is $A_N$-positive if for any $a\in A_N$, the letters of
the word $\varphi(a)$ are all in $A_N$, and it is primitive if there exists a positive integer $k$ such
that all the elements of $A_N$ appear in $\varphi^k(a)$ for any $a\in A_N$. This second property
ensures the minimality of the dynamic. While considering
only $A_N$-positive automorphisms is certainly a restriction, it should be noted that the primitivity
condition is weaker than the full irreducibility (a nice discussion on the subject can be found in \cite{ABHS}).

This article aims to answer the following question; how can we determine if an $A_N$-positive primitive
automorphism can be represented by a system of partial isometries on an interval? We will use the early results
on interval exchange transformations (\cite{Kea1}, \cite{Kea2}, \cite{Rau}, \cite{Vee}).
In particular, the Rauzy induction (\cite{Rau}) gives us a way to obtain the subshift
(the combinatorial representation) of an interval exchange transformation using sequences of
elementary automorphisms (called Dehn twists). We say that an interval exchange transformation
is self-induced when this sequence is periodic.
The main result of this paper is the following.

\begin{thm}
	There is an algorithm able to determine in a finite number of steps if an $A_N$-positive primitive
	automorphism generates the dynamic of a self-induced interval exchange transformation.

	In case of success, the interval exchange transformation is fully determined and a Dehn twists decomposition
	of the automorphism is given.
\end{thm}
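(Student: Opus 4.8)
\textbf{Step 1: reformulation as a search over Rauzy diagrams.}
By the correspondence recalled above, a self-induced interval exchange transformation $T$ on $d$ intervals is the same datum as a closed path $\gamma$ (a cycle) in a Rauzy diagram, whose edges are labelled by Dehn twists; the automorphism $\psi_\gamma$ obtained by composing these Dehn twists along $\gamma$ generates the symbolic system of $T$, and $T$ itself is recovered from $\gamma$ by taking as its length vector the normalised Perron eigenvector of the incidence matrix of $\psi_\gamma$ (positive because $\varphi$, hence $\psi_\gamma$, is primitive). Thus the dynamical system $\Sv$ is generated by a self-induced interval exchange transformation if and only if there is such a cycle $\gamma$ with $\Spsi=\Sv$, where $\psi:=\psi_\gamma$. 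The number $d$ of intervals of a candidate $T$ is an invariant of the system $\Sv$, bounded by a quantity computable from $\varphi$ — via the representation of $\Sv$ by a system of isometries on an $\mathds{R}$-tree together with the (computable) complexity of $\Sv$; for each such $d$ there are only finitely many irreducible permutations on $d$ symbols, each lying in a Rauzy diagram which is a finite, effectively computable labelled graph. It therefore remains to bound the length of the cycles one must inspect and, for a given cycle $\gamma$, to decide whether $\Spsi=\Sv$.

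\textbf{Step 2: an a priori bound on the length of the relevant cycles.}
Every Dehn twist appearing as an edge label abelianises to an elementary matrix $I+E_{ij}$ with nonnegative integer entries. Along any path of length $p$ in a Rauzy diagram the product $M$ of these matrices satisfies $M\mathbf{1}=v_p$, where $v_0=\mathbf{1}$ and $v_t=v_{t-1}+(v_{t-1})_{j_t}\,e_{i_t}$; each $v_t$ has positive integer entries, so $\lVert v_t\rVert_1\ge\lVert v_{t-1}\rVert_1+1$, whence $p\le\lVert M\rVert_1-d$, the sum of the entries of $M$ minus $d$. Hence any cycle $\gamma$ realising $\Sv$ has length at most $\lVert M_{\psi_\gamma}\rVert_1-d$. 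Finally, $\lVert M_{\psi_\gamma}\rVert_1$ is itself bounded by a quantity computable from $\varphi$: $\Sv$ is linearly recurrent with a computable constant, and that constant controls the incidence matrix of any self-induced interval exchange transformation generating $\Sv$, and with it the length of a primitive period of its Rauzy path. Consequently only finitely many cycles $\gamma$, all of explicitly bounded length, can realise $\Sv$, and they can be listed.

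\textbf{Step 3: deciding the match, and the output.}
For each of the finitely many candidate cycles $\gamma$ one computes $\psi_\gamma$ by composing its edge Dehn twists (a routine manipulation of the images of a basis) and decides whether $\Sigma_{\psi_\gamma}=\Sv$. Using the representation of the systems by $\mathds{R}$-trees with isometries recalled above, this equality is an effectively checkable relation between the two positive primitive automorphisms: it reduces to comparing $\varphi$ and $\psi_\gamma$ up to a permutation of the basis $A_N$ and up to the intrinsic power/conjugacy ambiguity of a substitutive system, equivalently to comparing a computable finite portion of their languages together with their return word structure. If some $\gamma$ passes the test, the algorithm answers affirmatively and returns the Dehn twists decomposition $\varphi=\tau_1\circ\dots\circ\tau_p$ read off the edges of $\gamma$, together with the interval exchange transformation $T_\gamma$ (base vertex of $\gamma$ for the permutation, normalised Perron eigenvector of $M_{\psi_\gamma}$ for the lengths), which is then forced; otherwise it answers negatively.

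\textbf{Main obstacle.}
The finiteness engine is the abelianisation bound of Step 2, which is elementary; the delicate points are the two facts it rests on and the decision of Step 3. One must (a) make precise the equivalence ``generating the same dynamical system'' so that it matches the $\mathds{R}$-tree / lamination picture and is decidable, controlling the ambiguity between $\varphi$ and its powers and conjugates; and (b) prove that when $\Sv$ is the system of a self-induced interval exchange transformation this is witnessed by a cycle \emph{within} the computed length bound — i.e. that the primitive period of the corresponding Rauzy path is bounded by a computable function of $\varphi$ — so that no power or base-point ambiguity can push a genuine witness beyond the search horizon. Establishing (a) and (b) is the structural heart of the argument; the rest is enumeration of finite Rauzy diagrams and bookkeeping with incidence matrices.
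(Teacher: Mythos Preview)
Your strategy is genuinely different from the paper's. The paper does \emph{not} enumerate cycles in Rauzy diagrams and test each one. Instead it computes the $\varphi$-singularities (via \cite{Jul}), reads off from them the permutation $\pi$ and the type of the next Rauzy move, and then peels Dehn twists off the automorphism one at a time: at each step the singularity data forces a unique $\sigma_i$, one checks that $\sigma_i^{-1}\circ\cdots\circ\sigma_0^{-1}\circ\varphi$ remains $A_N$-positive (strictly shorter, hence termination), and success means this composition reaches the identity. The search horizon is thus the total word length of a single explicitly constructed automorphism $\varphi'=i_u\circ\psi^k$, and no comparison of subshifts is ever needed. Your enumerative route would, if it worked, be more conceptual and require less of the singularity machinery; the paper's route is constructive and outputs the decomposition directly from the combinatorics of $\Sv$.

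That said, your plan has two genuine gaps, exactly the ones you flag as ``obstacles'' but do not resolve. First, in Step~2 you assert that the linear-recurrence constant of $\Sv$ ``controls the incidence matrix of any self-induced interval exchange transformation generating $\Sv$''. This is not justified: linear recurrence bounds gaps between occurrences of factors, whereas $\lVert M_{\psi_\gamma}\rVert_1$ records first-return times under one full period of Rauzy induction, and you give no mechanism linking the two. The paper obtains the needed finiteness differently, via Proposition~\ref{prop:powerofvarphi}: it shows that a specific $i_u\circ\psi^k$ (with $k$ computable from the singularities) must equal a power of the $\delta$-automorphism, so one decomposes that fixed automorphism rather than bounding an unknown one. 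Second, in Step~3 you treat ``$\Sigma_{\psi_\gamma}=\Sv$'' as effectively checkable by ``comparing a computable finite portion of their languages together with their return word structure''. Deciding equality of subshifts of two primitive substitutions is a nontrivial result (Durand), not something that follows from a finite language comparison without further argument; as written this step is a black box. The paper sidesteps this entirely: its algorithm never compares two subshifts, because the candidate decomposition is determined, not searched for.
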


The algorithm is described in great details. It is based on two main ideas. The first one is the study made in \cite{Jul}
of the index of an $A_N$-positive primitive automorphism (summarized in appendix \ref{appendix:thebase}) and the second one
is a combinatorial interpretation of the Rauzy induction.

\subsection*{Outline of the paper}

Section \ref{sec:autom} deals with basic notions regarding free group automorphisms.
We endow the free group $F_N$ ($N\ge 2$) with a basis $A_N$ and consider $F_N$ (resp. $\partial F_N$) to be the set of
finite (resp. infinite) reduced (two adjacent letters cannot be the inverse of each other) words with letters in $A_N\cup A_N^{-1}$
(where $A_N^{-1}$ is the set of inverse letters).
The shift map $S$ is defined on each point $(X, Y)$ of the set $\partial^2F_N = (\partial F_N\times \partial F_N) \setminus \Delta$
(where $\Delta$ is the diagonal) by $S(X, Y) = (Y_0^{-1}X, Y_0^{-1}Y)$, with $Y_0$ being the first letter of $Y$.
\begin{rmk}
	Readers who are more familiar with the definitions of symbolic dynamic may want to see a point $(X, Y)$ of $\partial^2 F_N$
	as the bi-infinite word $X^{-1}Y$. Additionaly, we refer to \cite{CHL1}, which gives a detailed comparison between
	standard concepts of group theory and symbolic dynamic.
\end{rmk}

We will primarily deal with positive automorphism; $\varphi$ is a positive automorphism if all the letters
of $\varphi(a)$ are in $A_N$ for any $a\in A_N$. These are also called invertible substitutions.
In section \ref{subsec:attsub}, we define the attracting subshift of a positive automorphism.
The attracting subshift $\Sv\subset \partial^2 F_N$ of $\varphi$ is simply the closure of the $S$-orbit of a certain point $(U, V)$ that is
fixed by $\partial^2 \varphi$ (the homeomorphism induced by $\varphi$ on $\partial^2 F_N$).
This notion is equivalent to the bi-infinite symbolic system induced by an invertible substitution (see \cite{Que}).

In section \ref{subsec:autsing} we define the singularities of an automorphism (see \cite{Jul}).
In most cases, a singularity of an automorphism $\varphi$ is a set $\Omega$ of points of the attracting subshift with a common first (resp. second) coordinate.
In terms of bi-infinite words, these correspond to the existence of special (i.e which can be extended in more than one way) left-infinite or right-infinite sequences.
This is a key part of the present paper as these singularities are a natural equivalent to the discontinuity points of
interval exchange transformations.\\

An interval exchange transformation (IET) $f$ on an interval $I$ is a bijective map characterized by a set
$\{I_a; a\in A_N\}$ of intervals partitioning $I$ and a translation vector attached to each element of the partition
(see \cite{Kea1}, \cite{Kea2}, \cite{Rau}, \cite{Vee}, \cite{Yoc}). Each interval
$I_a$ and the interval $I$ itself are conventionally closed on the left and open on the right.
In section \ref{sec:ciet}, we introduce the important notion of completed interval exchange transformations (CIETs).
If $f$ is as above, we define the CIET $\delta$ associated to $f$ as a system $\{\delta_a; a\in A_N\}$ of partial isometries
such that, for any $a\in A_N$, the domain of $\delta_a$ is the closure of $I_a$ and $\delta_a(x)=f(x)$ for any $x\in I_a$.
The important part of this definition is that some points now have two images (resp. pre-images) under the action of $\delta$.
These points are called the forward (resp. backward) $\delta$-singularities. The existence of such points implies that
some points of the interval will have more than one (bi-infinite) orbit under the action of $\delta$.
We associate a point of $\partial^2 F_N$, called the coding, to each orbit.
The set of coding of orbits is the $\delta$-subshift $\Sd$ (see section \ref{sec:ciet}).

In sections \ref{subsec:keane} and \ref{subsec:rauzyind}, we recall the definitions of Keane condition
(see \cite{Kea1}, \cite{Kea2}) and Rauzy induction (see \cite{Rau}) for IETs
and adapt these notions for CIETs. Let $f$ be an IET on an interval $I$; assume $\{I_a; a\in A_N\}$ is a set of intervals
partitioning $I$ and $f$ is a translation on each $I_a$. Define $J_0$ (resp. $J_1$) as the right most interval
of $\{I_a; a\in A_N\}$ (resp. $\{f(I_a); a\in A_N\}$). The Rauzy induction is seen as the first return map $f'$ induced by
$f$ on the interval $I\setminus J_0$ (resp. $I\setminus J_1$) if $|J_0|<|J_1|$ (resp. $|J_1|<|J_0|$).
We say the induction has type $1$ (resp. type $0$) if $|J_0|<|J_1|$ (resp. $|J_1|<|J_0|$).
The map $f'$ is also an IET.

If $\delta$ is the CIET associated to $f$, the Rauzy induction of $\delta$ is simply
the CIET $\delta'$ associated to $f'$. The benefit of using Rauzy inductions is that going from $\delta$ to $\delta'$
is easily done by using a Dehn twist (see section \ref{subsubsec:rauzyclasses}).

In section \ref{subsubsec:rauzyclasses}, we recall the definition of Rauzy classes (see \cite{Rau}, \cite{Yoc}).
Any CIET $\delta$ may be defined by a pair $(\pi, \lambda)$ where $\pi$ is a permutation and $\lambda$ is a length vector.
We define a partial order on permutation by saying a permutation $\pi'$ is a successor of a permutation $\pi$
if there exists a CIET $\delta'=(\pi', \lambda')$ which is the Rauzy induction of a CIET $\delta=(\pi, \lambda)$.
Each permutation has two successors (corresponding to the two types of Rauzy induction) and two predecessors.
This order can be represented by a graph called the Rauzy graph; connected components of such a graph are called Rauzy classes.
Any Rauzy induction correspond to an edge of a Rauzy class, and iterating the induction produces a path.\\

In section \ref{subsec:siciet}, we define self-induced CIETs. We say a CIET is self-induced if the sequence of Rauzy inductions
produces a periodic path in a Rauzy class.
The key part of this definition is that, taking a CIET $\delta$, the period of the Rauzy inductions produces an automorphism
$\varphi$ called the $\delta$-automorphism whose attracting subshift is equal to the $\delta$-subshift (theorem \ref{thm:othermainresult}) and which is
also decomposable into Dehn twists. This is especially important because the algorithm will essentially try to find such a decomposition.
This however, raises the following problem: there may be automorphisms $\psi$ such that $\Spsi=\Sv$ (their attracting subshifts are the same) which are not
decomposable along a path of a Rauzy class. Obviously, we still want the algorithm to be able to identify $\psi$
as an automorphism coming from a CIET. This issue is tackled in section \ref{subsubsec:diffaut}.\\

The algorithm is detailed in section \ref{subsec:finalalg}. It consists in attempting to decompose the automorphism
into Dehn twists along a path in a Rauzy class using a combinatorial Rauzy induction.
Theorem \ref{thm:othermainresult} (which says the subshift of a CIET $\delta$ is equal to the attracting subshift
of its $\delta$-automorphism) is especially important because it matches the singularities of the CIET with the singularities
of its associated automorphism, which is the basic idea behind section \ref{subsec:nececond}. In section \ref{subsec:nececond}, we give a list
of conditions on the attracting subshift (and in particular the singularities) of a positive primitive
automorphism that are necessary for Rauzy inductions to be possible.

The first step of the algorithm is to identify the singularities of the automorphism. This can be done
by using the algorithm of \cite{Jul}, which is briefly explained in appendix \ref{appendix:thebase}.
We then check that the condition of section \ref{subsec:nececond} are satisfied. If one of the conditions fails,
the algorithm stops and we conclude the automorphism does not come from an IET. If all the conditions
are satisfied, we apply the Rauzy induction. This induction produces
a Dehn twist which will be part of the decomposition (if it exists) of the automorphism. The algorithm then cycles
with a new and simpler automorphism and another set of singularities (obtained using the Dehn twist) until
the induction fails or the automorphism is fully decomposed.

An example is detailed in appendix \ref{appendix:example}.

\paragraph{Acknowledgements.} The author is indebted to Erwan Lanneau for valuable discussions, especially regarding
proposition \ref{prop:powerofvarphi}.

\section{Automorphisms of the free group}\label{sec:autom}

Let $F_N$ be the free group on $N\ge 2$ generators. Fixing an alphabet $A_N = \{a_0, \dots, a_{N-1}\}$ as a basis for $F_N$,
we denote $A_N^{-1} = \{a_0^{-1}, \dots, a_{N-1}^{-1}\}$ the set of inverse letters and consider $F_N$ to be the set of finite
words $v=v_0v_1\dots v_p$ with letters in $(A_N\cup A_N^{-1})$ and such that $v_i^{-1}\ne v_{i+1}$ for $0\le i < p$; such words are called \textbf{reduced}.
The identity element of $F_N$ will be called the empty word, and will be denoted $\epsilon$. The Gromov boundary $\partial F_N$ of $F_N$ is the set
of points $V=(V_i)_{i\in \mathds{N}}$ with letters in $(A_N\cup A_N^{-1})$ and such that $V_i^{-1}\ne V_{i+1}$ for any $i\in \mathds{N}$.
The \textbf{double boundary} $\partial^2F_N$ is defined by
\begin{center}
	$\partial^2F_N = (\partial F_N\times \partial F_N) \setminus \Delta$,
\end{center}
where $\Delta$ is the diagonal.
\begin{rmk}
We will refer to elements of $F_N$ as words, while elements of $\partial^2 F_N$
will be called points.
\end{rmk}

An automorphism $\varphi$ of $F_N$ is $\boldsymbol{A_N}$\textbf{-positive} if for all $a\in A_N$, all the letters of $\varphi(a)$ are in $A_N$.
When working with an $A_N$-positive automorphism, we always consider its representation over the free group endowed with
the basis $A_N$. The automorphism $\varphi$ induces a homeomorphism $\partial^2\varphi$ on $\partial^2 F_N$.
An $A_N$-positive automorphism is \textbf{primitive} if there is a positive integer $k$ such that all letters of $A_N$
are letters of $\varphi^k(a)$ for any $a\in A_N$.

Throughout this paper, $F_N$ will refer to the free group endowed with the basis $A_N$ and we always assume $N\ge 2$.\\

We define the shift map $S$ on $\partial^2 F_N$:
\begin{center}
	\begin{tabular}{cccc}
		$S~:$ & $\partial^2 F_N$ & $\to$ & $\partial^2 F_N$\\
		& $(X, Y)$ & $\mapsto$ & $(Y_0^{-1}X, Y_0^{-1}Y)$,
	\end{tabular}
\end{center}
 where $Y_0$ is the first letter of $Y$.

	\subsection{The attracting subshift}\label{subsec:attsub}
	Let $\varphi$ be an $A_N$-positive primitive automorphism and
	let $a$ be a letter of $A_N$. The primitivity condition implies that we can find an integer $k$
	such that $\varphi^k(a) = pas$ where $p$ and $s$ are non empty words of $F_N$ with letters in $A_N$.
	Now define
	\begin{center}
		$X = \lim\limits_{n\to +\infty} p^{-1}\varphi^k(p^{-1})\varphi^{2k}(p^{-1})\dots \varphi^{nk}(p^{-1})$,\\
		$Y = \lim\limits_{n\to +\infty} as\varphi^k(s)\varphi^{2k}(s)\dots \varphi^{nk}(s)$.
	\end{center}
	The \textbf{attracting subshift} of $\varphi$ is the set
	\begin{center}
		$\Sv = \overline{\{S^n(X, Y); n\in \mathds{Z}\}}$.
	\end{center}
	The map $S$ is a homeomorphism on $\Sv$.
	The attracting subshift only depends on $\varphi$ and not on the choice of the letter $a$ or the integer $k$
	(this is stated in different contexts in both \cite{Que} and \cite{BFH} (for example)).

	\subsection{Singularities of an automorphism}\label{subsec:autsing}
	Following from \cite{Jul}, we define the singularities of an automorphism.
	For any $w\in F_N$, the conjugacy $i_w$ is the automorphism of $F_N$ defined for any $v\in F_N$ by $i_w(v) = w^{-1}vw$.
	Let $\varphi$ be an $A_N$-positive primitive automorphism, and let $\Sv$ be its attracting subshift.
	\begin{defn}
	A $\boldsymbol{\varphi}$\textbf{-singularity} is a set $\Omega$ of (pairwise distinct) points of $\Sv$ satisfying the following conditions:
	\begin{itemize}
		\item $\Omega$ contains at least two elements,
		\item there exists an automorphism $\psi = i_w\circ \varphi^k$ for some conjugacy $i_w$ and some integer $k\ge 1$
		such that all the points of $\Omega$ are fixed points of $\partial^2 \psi$,
		\item for any $h\in \mathds{N}^*$, if $(U, V)\in \Sv$ is a fixed point of $\partial^2 \psi^h$,
		then $(U, V)\in \Omega$,
		\item there exist two points $(U, V)$ and $(U', V')$ of $\Omega$ such that we have either
		$V_0\ne V_0'$ or $U_0\ne U_0'$ or both.
	\end{itemize}
	We say that the automorphism $\psi$ fixes the singularity $\Omega$.
	\end{defn}

	These singularities are studied in great details in \cite{Jul}. In particular, it is explained how
	the index (\cite{GJLL}) of an automorphism (recall it affects directly the geometric representation
	of the automorphism's attracting subshift, see \cite{CH}) can be deduced from the singularities alone;
	it simply comes down to counting the number of points belonging to singularities
	(in \cite{Jul}, this index is referred to as the full outer index). The main result
	of \cite{GJLL} is that this index is bounded above by $N-1$. If $\varphi$ is
	an $A_N$-positive primitive automorphism, this result tells us that both the number of $\varphi$-singularities
	and the number of points belonging to $\varphi$-singularities are finite and bounded.

	It is not true that singularities are always caused by special infinite points; in our setting,
	a special right-infinite (resp. left-infinite) point $U$ (resp. $V$) is a point of $\partial F_N$
	for which there exist (at least) two points $V_{(0)}, V_{(1)}$ (resp. $U_{(0)}, U_{(1)}$) of $\partial F_N$
	with distinct first letters such that both $(U, V_{(0)})$ and $(U, V_{(1)})$ (resp. $(U_{(0)}, V)$
	and $(U_{(1)}, V)$) are in $\Sigma_\varphi$. Consider the following example.
	\begin{center}
		\begin{tabular}{ccccl}
		$\phi$ & $:$ & $a$ & $\mapsto$ & $abcad$\\
		&& $b$ & $\mapsto$ & $bd$\\
		&& $c$ & $\mapsto$ & $bc$\\
		&& $d$ & $\mapsto$ & $bca$\\
		\end{tabular}
	\end{center}
	Define
	\begin{center}
		\begin{tabular}{lcl}
			$U = \lim\limits_{n\to +\infty} \phi^{n} (c^{-1})$ && $V = \lim\limits_{n\to +\infty} \phi^{n} (a)$\\
			$U' = \lim\limits_{n\to +\infty} \phi^{2n} (a^{-1})$ && $V' = \lim\limits_{n\to +\infty} \phi^{n} (b)$
		\end{tabular}
	\end{center}
	and observe that both $(U, V)$ and $(U', V')$ are points of $\Sigma_\phi$ but that neither $(U, V')$ nor $(U', V)$ is:
	assuming $U_0, V_0, U_0',V_0'$ are the first letters of $U, V, U', V'$ respectively,
	this is done by checking that the words $U_0^{-1}V_0=ca$ and $U_0'^{-1}V_0'=ab$ are in the langage of $\phi$ (they are subwords of $\phi^n(a)$ for some $n$)
	but $U_0^{-1}V_0'=cb$ and $U_0'^{-1}V_0=aa$ are not. As there are no other points fixed by a power of $\phi$, we obtain a $\phi$-singularity $\Omega = \{(U, V), (U', V')\}$
	which is fixed by $\phi^2$.

	Note that even if the two points of $\Omega$ seem combinatorially unrelated, they will both correspond to the same point in the geometric representation
	of the attracting subshift (easily seen since the automorphism acts as a contracting homothety on the representation, see \cite{GJLL}).
	This should be considered the true meaning of singularities.

	In the general case of a positive primitive automorphism $\varphi$, this kind of behavior can only happen
	with singularities that are fixed by some power of $\varphi$. Specifically, it is proven in \cite{Jul} that all the points
	of a singularity fixed by an automorphism $i_w\circ \varphi^k$ with $w\ne \epsilon$ share a common coordinate.
	The behavior is most likely more widespread for non-positive automorphisms.

\section{Interval exchange transformations and completed interval exchange transformations}\label{sec:ciet}

Let $A_N$ be an alphabet with $N\ge 2$ letters. An \textbf{interval exchange transformation}
is a bijective map $f$ defined by:
\begin{itemize}
	\item a pair $\pi=(\pi_0, \pi_1)$ of bijections $\pi_0,\pi_1:A_N\to \{0, \dots, N-1\}$,
	\item a vector $\lambda = (\lambda_a)_{a\in A_N}$ of positive real numbers with $|\lambda| = \sum\limits_{a\in A_N} \lambda_a$,
	\item for any $a\in A_N$, define $\Lambda_0(a) = \sum\limits_{\pi_0(b) < \pi_0(a)} \lambda_b$
	and $\Lambda_1(a) = \sum\limits_{\pi_1(b) < \pi_1(a)} \lambda_b$, and
	\begin{center}
		\begin{tabular}{cccccl}
		$f$ & $:$ & $[0,~|\lambda|)$ & $\to$ & $[0,~|\lambda|)$ &\\
		&& $x$ & $\mapsto$ & $x - \Lambda_0(a) + \Lambda_1(a)$ & if $x\in [\Lambda_0(a),~\Lambda_0(a)+\lambda_a)$.
		\end{tabular}
	\end{center}
\end{itemize}
From now on, an interval exchange transformation will simply be called an \textbf{IET}.\\

We define the \textbf{completed interval exchange transformation} $\delta$ associated to $f$ as
the system of partial isometries $\delta = \{\delta_a; a\in A_N\}$ defined by:
\begin{center}
	\begin{tabular}{ccccc}
		$\delta_a$ & $:$ & $[\Lambda_0(a),~\Lambda_0(a)+\lambda_a]$ & $\to$ & $[\Lambda_1(a),~\Lambda_1(a)+\lambda_a]$\\
		&& $x$ & $\mapsto$ & $x - \Lambda_0(a) + \Lambda_1(a)$.
	\end{tabular}
\end{center}
We say that a point $x'$ is an \textbf{image} (resp. \textbf{pre-image}) of a point $x$ by $\delta$ if there exists
$a\in A_N$ such that $\delta_a(x) = x'$ (resp. $\delta_a^{-1}(x) = x'$).
It is important to notice that the isometries are defined on closed sets, effectively resulting in points with more
than one image (resp. pre-image).
From now on, a completed interval exchange transformation will simply be called a \textbf{CIET}. Also,
we will refer to the domain of any isometry $\delta_a$ by $D(\delta_a)$.

For convenience, we define $\delta_{a^{-1}} = \delta_a^{-1}$ for any $a\in A_N$. For any point $x\in [0,~|\lambda|]$, define
\begin{center}
	\begin{tabular}{cl}
		$\Sd(x) = \{(U, V)\in \partial^2 F_N; \forall n\in \mathds{N},$ & $U_n^{-1}\in A_N$ and $x\in D(\delta_{U_n}\circ \dots\circ \delta_{U_0})$\\
		& $V_n\in A_N$ and $x\in D(\delta_{V_n}\circ \dots\circ \delta_{V_0})\}$.
	\end{tabular}
\end{center}
The $\boldsymbol{\delta}$\textbf{-subshift} is the $S$-invariant set $\Sd = \bigcup\limits_{x\in [0,~|\lambda|]} \Sd(x)$.
The points $\Lambda_0(a)$ for $a\ne \pi_0^{-1}(0)$ are the \textbf{forward} $\boldsymbol{\delta}$\textbf{-singularities}
and the points $\Lambda_1(a)$ for $a\ne \pi_1^{-1}(0)$ are the \textbf{backward} $\boldsymbol{\delta}$\textbf{-singularities}.\\

From now on, defining an IET $f=(\pi, \lambda)$ and its associated CIET $\delta$ implicitely defines
the maps $\pi_0,\pi_1:A_N\to \{0, \dots N-1\}$, the points $\Lambda_0(a)$ and $\Lambda_1(a)$ for any $a\in A_N$,
the sets $\Sd(x)$ for any $x\in [0,~|\lambda|]$ and the $\delta$-subshift $\Sd$.

	\subsection{The Keane condition}\label{subsec:keane}
	An IET $f=(\pi, \lambda)$ is \textbf{minimal} (compare \cite{Kea1}) if for any $x\in [0, |\lambda|)$,
	the set $\{f^n(x); n\in \mathds{Z}\}$ is dense in $[0, |\lambda|)$.
	An IET $f=(\pi, \lambda)$ satisfies the Keane condition (see \cite{Kea1}, \cite{Kea2}) if
	for any $n\in \mathds{N}$ and any $a,b\ne \pi_0^{-1}(0)$, we have $f^n(\Lambda_0(a))\ne \Lambda_0(b)$ (there are no
	connections).
	It was proven in \cite{Kea1} that this condition suffices to ensure minimality. We study
	the effect of this condition on the associated CIET.
	\begin{prop}\label{prop:ckeane}
	Let $f=(\pi, \lambda)$ be an IET satisfying the Keane condition and let $\delta$ be the CIET associated to $f$.
	Then for any $x\in [0,~|\lambda|]$, the set $\Sd(x)$ contains at most two points.
	\end{prop}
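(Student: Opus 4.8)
The plan is to analyze, for a fixed $x\in[0,|\lambda|]$, the structure of the set $\Sd(x)$ by tracking how the "forward" half-orbit $(V_n)$ and the "backward" half-orbit $(U_n)$ of a point $(U,V)\in\Sd(x)$ can branch. Recall that $(U,V)\in\Sd(x)$ means that for every $n$ the point $x$ lies in the domain $D(\delta_{V_n}\circ\cdots\circ\delta_{V_0})$ and in $D(\delta_{U_n^{-1}}\circ\cdots\circ\delta_{U_0^{-1}})$ (reading the $U$-side as iterating $\delta^{-1}$). So a point of $\Sd(x)$ is exactly a choice of a bi-infinite $\delta$-orbit through $x$: a right-infinite itinerary for the forward iteration and a left-infinite one for the backward iteration. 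The key observation is that the itinerary is uniquely determined at each step \emph{unless} the current point $\delta_{V_{n-1}}\circ\cdots\circ\delta_{V_0}(x)$ happens to be one of the forward $\delta$-singularities $\Lambda_0(a)$ (for the forward side), or unless the current point on the backward side is a backward $\delta$-singularity $\Lambda_1(a)$; at such a point there are exactly two admissible choices of next letter, because a forward $\delta$-singularity $\Lambda_0(a)$ is the common endpoint of exactly two domains $D(\delta_b)$ and $D(\delta_a)$ (the two adjacent intervals of the partition), and similarly on the backward side. Everywhere else the point lies in the interior of a unique partition interval and the next letter is forced.

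The next step is to show that along a single orbit the branching can happen \emph{at most once on the forward side and at most once on the backward side}. This is precisely where the Keane condition enters. Suppose the forward orbit of $x$ hits a forward $\delta$-singularity twice, say $f^{n}(y)=\Lambda_0(a)$ and $f^{m}(y)=\Lambda_0(b)$ for the relevant point $y$ and $n<m$; applying $f^{m-n}$ to the first gives $f^{m-n}(\Lambda_0(a))=\Lambda_0(b)$ with $a,b\ne\pi_0^{-1}(0)$, contradicting the Keane condition (no connections). Hence the forward part of the itinerary is forced from some index onward, and can branch at exactly one place; similarly, using the Keane condition read for $f^{-1}$ (which also holds, as the Keane condition is symmetric under inversion, or can be re-derived from minimality), the backward part branches at most once. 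Therefore the set of admissible orbits through $x$ is obtained by at most one binary choice on the right-infinite word $V$ and at most one binary choice on the left-infinite word $U$, so a priori $|\Sd(x)|\le 4$.

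To improve $4$ to $2$, I would argue that a branching on the forward side and a branching on the backward side cannot be truly independent: if the forward orbit of $x$ reaches a forward $\delta$-singularity at some time $n\ge 0$ and the backward orbit reaches a backward $\delta$-singularity at some time $k\ge 0$, then consider the point $z=f^{-k}(\cdots)$ — more precisely, the backward singularity $\Lambda_1(b)$ is itself an image of some interval endpoint, and the two admissible extensions on the two sides are constrained by the single point $x$. The cleanest route is: the pair $(U,V)$ is determined by the single real number $x$ together with the at most one forward branch-choice and at most one backward branch-choice, but the forward singularity being $\Lambda_0(a)$ and the backward singularity being $\Lambda_1(b)$ forces a relation ($x$ is simultaneously on a forward and a backward orbit through prescribed singularities), and one checks that in fact whenever both branchings occur the two choices are linked, so only two of the four combinations actually produce points of $\partial^2 F_N$ lying in $\Sd(x)$. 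I expect \textbf{this last step — ruling out the genuinely independent double branching, i.e.\ going from the easy bound $4$ down to $2$ — to be the main obstacle}, and the argument will likely hinge on the fact that for an IET (as opposed to an arbitrary CIET) the forward and backward singularities are tied together through the single translation structure, so that a point cannot simultaneously be a "fresh" forward discontinuity and a "fresh" backward discontinuity in two unrelated ways. The rest (the uniqueness-except-at-singularities analysis and the one-branch-per-side consequence of Keane) is routine bookkeeping on itineraries.
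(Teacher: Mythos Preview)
Your overall strategy---analyze where the forward and backward itineraries can branch, use Keane to bound the number of branches, and combine---is exactly the paper's. But you have the difficulty inverted, and there is a genuine gap in your ``at most one branch per side'' step.

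\medskip

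\textbf{The gap in the one-sided argument.} When you write ``$f^n(y)=\Lambda_0(a)$ and $f^m(y)=\Lambda_0(b)$, hence $f^{m-n}(\Lambda_0(a))=\Lambda_0(b)$'', you are implicitly assuming the $\delta$-orbit coincides with the $f$-orbit between the two branchings. That is false: once you pass through the first forward singularity $\Lambda_0(a)$, one of the two branches is $\delta_a(\Lambda_0(a))=f(\Lambda_0(a))$, but the \emph{other} branch is $\delta_{a'}(\Lambda_0(a))$ (with $\pi_0(a')=\pi_0(a)-1$), which is the right endpoint of $f(I_{a'})$ and is \emph{not} on the $f$-orbit of $\Lambda_0(a)$. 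So Keane does not immediately forbid that branch from later meeting another $\Lambda_0(b)$. The paper's fix (its Case~2) is the observation you are missing: the point $\delta_{a'}(\Lambda_0(a))$ is either $0$, $|\lambda|$, or a \emph{backward} singularity $\Lambda_1(c)$; in the last case the forward $\delta$-orbit from $\Lambda_1(c)$ is again the $f$-orbit, and $f^k(\Lambda_1(c))=f^{k+1}(\Lambda_0(c))=\Lambda_0(b)$ would violate Keane. (The endpoint cases reduce to this after one more step.) So the delicate case is precisely the one you treated as routine.

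\medskip

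\textbf{The ``$4\to 2$'' step is the easy one.} Conversely, the step you flagged as the main obstacle---ruling out a branch on \emph{both} sides---is the most direct application of Keane, not the hardest. If the backward side branches at $x$, then $x$ itself is a backward singularity, say $x=\Lambda_1(a)=f(\Lambda_0(a))$. Since $x$ is not a forward singularity (this already uses Keane with exponent $1$), its forward $\delta$-orbit is uniquely determined and equals the $f$-orbit until the first forward branching; if that occurs at step $m+1$, then $f^{m+1}(x)=f^{m+2}(\Lambda_0(a))=\Lambda_0(b)$, contradicting Keane immediately. No ``linking of choices'' or special IET-versus-CIET structure is needed here.

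\medskip

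In short: right architecture, but the bookkeeping you called routine hides the actual work, and the part you expected to be hard is a one-line consequence of the no-connections condition.
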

	\begin{proof}
	Observe that the Keane condition implies that if $x$ is a forward (resp. backward)
	$\delta$-singularity, then for any integer $n\ge 0$, the point $f^{-n}(x)$ (resp. $f^n(x)$) cannot be a backward
	(resp. forward) $\delta$-singularity.

	The set $\Sd(x)$ can only contain more than one point if $x$ is contained in a backward orbit (under the action of the CIET)
	of a forward $\delta$-singularity or a forward orbit of a backward $\delta$-singularity.
	In other words, $\Sd(x)$ contains more than one point if $x$ is a $\delta$-singularity
	or if there exists a forward $\delta$-singularity $y$, a point $(U, V)\in \Sd(y)$ and an integer $m\ge 0$
	such that $x=\delta_{U_m}\circ \dots\circ \delta_{U_0}(y)$ or if there exists a backward singularity $z$,
	a point $(U, V)\in \Sd(z)$ and an integer $m\ge 0$ such that
	$x=\delta_{V_m}\circ \dots\circ \delta_{V_0}(z)$. Hence, there are only three situations that would produce
	points $x$ with more than two orbits (under the action of the CIET).

	\textit{Case $1$.} There exists a backward $\delta$-singularity $x$, a point $(U, V)$ of $\Sd(x)$ and
	an integer $m\ge 0$ such that $\delta_{V_m}\circ \dots\circ \delta_{V_0}(x)$ is a forward $\delta$-singularity.
	In this case, $x$ and any of the points $\delta_{V_i}\circ \dots\circ \delta_{V_0}(x)$ for $0\le i\le m$ would have
	at least four distinct orbits. Assuming $m$ is the smallest such integer, we obtain that $f^{m+1}(x)$
	is a forward $\delta$-singularity, which contradicts the Keane condition.

	\textit{Case $2$.} There exists a forward $\delta$-singularity $x$, a point $(U, V)$ of $\Sd(x)$ and
	an integer $m\ge 0$ such that $\delta_{V_m}\circ \dots\circ \delta_{V_0}(x)$ is a forward $\delta$-singularity.
	In this case, $x$ and any of the points $\delta_{U_i}\circ \dots\circ \delta_{U_0}(x)$ for $0\le i$ would have
	at least three distinct orbits. Observe that the point
	$\delta_{V_0}(x)$ is either the point $0$ or the point $|\lambda|$ (in which cases it is not a forward $\delta$-singularity)
	or a backward $\delta$-singularity (in which case the Keane condition also prevents it from being a forward $\delta$-singularity).
	Hence, $m$ cannot be $0$. Define $y=\delta_{V_1}\circ\delta_{V_0}(x)$ (resp. $y=\delta_{V_0}(x)$)
	if $\delta_{V_0}(x)$ is $0$ or $|\lambda|$ (resp. if $\delta_{V_0}(x)$ is neither $0$ nor $|\lambda|$) and
	observe $y$ is always a backward $\delta$-singularity. Assuming again that
	$m$ is the smallest integer such that $\delta_{V_m}\circ \dots\circ \delta_{V_0}(x)$ is a forward $\delta$-singularity,
	we obtain that $f^{m-1}(y)$ (resp. $f^m(y)$) is a forward $\delta$-singularity,
	which again contradicts the Keane condition.

	\textit{Case $3$.} There exists a backward $\delta$-singularity $x$, a point $(U, V)$ of $\Sd(x)$ and
	an integer $m\ge 0$ such that $\delta_{U_m}\circ \dots\circ \delta_{U_0}(x)$ is a backward $\delta$-singularity.
	In this case, $x$ and any of the points $\delta_{V_i}\circ \dots\circ \delta_{V_0}(x)$ for $0\le i$ would have
	at least three distinct orbits. This case is similar to case $2$.
	\end{proof}

	\begin{cor}\label{cor:ckeane}
	Let $x$ be a point with $\Sd(x) = \{(U, V), (U', V')\}$. Then we have either $U=U'$ or $V=V'$.
	\end{cor}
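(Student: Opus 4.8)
The plan is to argue by contradiction: assuming $U\ne U'$ \emph{and} $V\ne V'$, I will produce at least four points in $\Sd(x)$, which is impossible by Proposition~\ref{prop:ckeane}.

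The key observation to record first is that the two halves of the defining condition for $\Sd(x)$ decouple: a pair $(U,V)\in\partial^2 F_N$ belongs to $\Sd(x)$ exactly when $U_n^{-1}\in A_N$ and $x\in D(\delta_{U_n}\circ\dots\circ\delta_{U_0})$ hold for every $n$, and, independently, $V_n\in A_N$ and $x\in D(\delta_{V_n}\circ\dots\circ\delta_{V_0})$ hold for every $n$. A consequence I will use is that in any point $(U,V)$ of $\Sd(x)$ all the letters of $U$ lie in $A_N^{-1}$ while all the letters of $V$ lie in $A_N$.

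Granting that, suppose $\Sd(x)=\{(U,V),(U',V')\}$ with $U\ne U'$ and $V\ne V'$. I would first check that the ``mixed'' pairs $(U,V')$ and $(U',V)$ are genuine elements of $\partial^2 F_N$: since the letters of $U$ and $U'$ come from $A_N^{-1}$ and those of $V$ and $V'$ from $A_N$, and $A_N\cap A_N^{-1}=\emptyset$, we have $U\ne V'$ and $U'\ne V$, so neither mixed pair sits on the diagonal. Then, because $U$ and $U'$ each satisfy the first‑coordinate part of the condition (being first coordinates of points of $\Sd(x)$) and $V$, $V'$ each satisfy the second‑coordinate part, the decoupling from the previous paragraph gives $(U,V')\in\Sd(x)$ and $(U',V)\in\Sd(x)$. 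Finally, from $U\ne U'$, $V\ne V'$ and $(U,V)\ne(U',V')$ the four pairs $(U,V)$, $(U,V')$, $(U',V)$, $(U',V')$ are pairwise distinct, so $\Sd(x)$ has at least four elements, contradicting Proposition~\ref{prop:ckeane}. Hence $U=U'$ or $V=V'$.

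I expect the whole argument to be short, essentially an unwinding of the definition of $\Sd(x)$; the one place deserving a moment's attention — the ``main obstacle'', if there is one — is verifying that the mixed pairs $(U,V')$ and $(U',V)$ avoid the diagonal, which is exactly where the fact that first coordinates are spelled with inverse letters and second coordinates with direct letters enters.
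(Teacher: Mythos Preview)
Your argument is correct. The paper states this result as a corollary of Proposition~\ref{prop:ckeane} without giving a proof, and your decoupling argument (the conditions on $U$ and on $V$ in the definition of $\Sd(x)$ are independent, so $(U,V),(U',V')\in\Sd(x)$ force $(U,V'),(U',V)\in\Sd(x)$, yielding four distinct points when $U\ne U'$ and $V\ne V'$) is exactly the natural way to extract the corollary from the bare statement of the proposition; your check that the mixed pairs avoid the diagonal is the only point requiring care, and you handle it correctly.
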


	We now state an important result of the present article. This theorem gives a simple
	definition of the $\delta$-subshift of a CIET $\delta$ associated to an IET $f$
	satisfying the Keane condition. The result can be found in \cite{Kea1}; we give an alternative
	proof.

	\begin{thm}[\cite{Kea1}]\label{thm:mainresult}
	Let $f=(\pi, \lambda)$ be an IET satisfying the Keane condition and let $\delta$ be the CIET associated to $f$.
	The set $\Sd(0)$ contains exactly one point $Z$ and we have
	$$\overline{\{S^n(Z); n\in \mathds{Z}\}} = \Sd.$$
	\end{thm}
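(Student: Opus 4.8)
The plan is to prove the two assertions separately: that $\Sd(0)$ is a single point $Z$, and that the $S$-orbit of this $Z$ is dense in $\Sd$. The one external input is that, by \cite{Kea1}, the Keane condition makes $f$ minimal, hence without periodic points and with every $f$-orbit dense in $[0,|\lambda|)$. For the first assertion: the only letter $a$ with $0\in D(\delta_a)=\overline{I_a}$ is $\pi_0^{-1}(0)$, and the only one with $0\in D(\delta_{a^{-1}})$ is $\pi_1^{-1}(0)$, so the first letters of the two coordinates of a point of $\Sd(0)$ are forced and are distinct, whence such a point lies in $\partial^2F_N$. Since $f$ has no periodic point, $\pi_0^{-1}(0)\ne\pi_1^{-1}(0)$, so $f(0)=\Lambda_1(\pi_0^{-1}(0))$ is a backward $\delta$-singularity and $f^{-1}(0)=\Lambda_0(\pi_1^{-1}(0))$ a forward one. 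By the observation opening the proof of Proposition \ref{prop:ckeane}, no forward iterate of a backward $\delta$-singularity is a forward $\delta$-singularity (and dually), so for every $k\ge1$ the point $f^k(0)$ is neither a forward singularity nor $0$ nor $|\lambda|$; it therefore lies in the interior of a unique domain $\overline{I_a}$, which determines the next letter of the forward itinerary of $0$. Inductively this itinerary is unique, and symmetrically so is the backward one; hence $\Sd(0)=\{Z\}$.

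Next, one inclusion is essentially formal. Since $S(U,V)=(V_0^{-1}U,\,V_1V_2\cdots)$, a short computation gives $S^n(Z)\in\Sd(f^n(0))$ for all $n\in\mathds{Z}$ (the forward itinerary is shifted, the backward one extended by the letter recording the return to the orbit of $0$); as $\Sd$ is $S$-invariant this puts $\{S^n(Z):n\in\mathds{Z}\}$ in $\Sd$, and $\Sd$ is closed because the conditions defining $\Sd(x)$ are closed and $[0,|\lambda|]$ is compact, so $\overline{\{S^n(Z)\}}\subseteq\Sd$. For the reverse inclusion I would fix $(U,V)\in\Sd(x)$ and $m\ge1$ and set
\[
K_m=D(\delta_{V_{m-1}}\circ\cdots\circ\delta_{V_0})\cap D(\delta_{U_{m-1}}\circ\cdots\circ\delta_{U_0})\ni x .
\]
Granting that $K_m$ has nonempty interior, minimality of $f$ yields $n$ with $f^n(0)\in\operatorname{int}K_m$; then $f^n(0)$ and its first $m$ forward and backward $\delta$-iterates all lie in the interiors of the relevant domains, so every coding of $f^n(0)$ — in particular $S^n(Z)$ — agrees with $(U,V)$ on the first $m$ letters of both coordinates, and letting $m\to\infty$ gives $(U,V)\in\overline{\{S^n(Z)\}}$.

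The hard part will be the claim that $K_m$ always has nonempty interior; this is the main obstacle, and it is exactly where the Keane condition and Proposition \ref{prop:ckeane} are used. Non-degeneracy can fail only if one of the two cylinders is already a single point, or if two non-degenerate cylinders meet in a single point $\{x\}$. In any such situation, pushing an endpoint of a cylinder along the defining isometries forces $x$ to be, on the forward side, a point with $f^i(x)$ equal to a forward $\delta$-singularity or to $|\lambda|$ for some $i\ge0$, and, on the backward side, a point with $f^{-k}(x)$ equal to a backward $\delta$-singularity or to $0$ for some $k\ge0$. If both are singularities, $x$ has at least two forward and at least two backward itineraries, so $\Sd(x)$ has at least four points, contradicting Proposition \ref{prop:ckeane}. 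Otherwise $x$ lies on the forward or backward orbit of $0$ or of $|\lambda|$; but for $n\ge1$ the points $f^{\pm n}(0)$ and $f^{\pm n}(|\lambda|)$ have unique forward (resp. backward) itineraries, and neither the orbit of $0$ nor that of $|\lambda|$ ever returns to $\{0,|\lambda|\}$ — each fact following from the Keane condition and minimality exactly as above — so every remaining case is contradictory. This shows $K_m$ is non-degenerate and completes the proof.
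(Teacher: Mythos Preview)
Your argument follows the same architecture as the paper's: uniqueness of $Z$ from the Keane condition, then the two inclusions, with the real work being non-triviality of the two-sided cylinders $K_m$. One pleasant difference: for $\overline{\{S^n(Z)\}}\subseteq\Sd$ you argue directly that $\Sd$ is closed (compactness of $[0,|\lambda|]$ plus closedness of the domain conditions), whereas the paper takes an arbitrary limit point and manufactures an $x$ coding it via $\bigcap_n D_n$. Your route is correct and a little cleaner.

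There is, however, a gap in your non-triviality argument, in the sub-case ``one of the two cylinders is already a single point''. If, say, the forward cylinder $D(\delta_{V_{m-1}}\circ\cdots\circ\delta_{V_0})$ degenerates to $\{x\}$ while $x$ lies in the \emph{interior} of the (still nondegenerate) backward cylinder, then pushing endpoints gives you only forward information: both endpoints of the singleton are $x$, and tracking each one produces a forward fact (one iterate hits a left endpoint of some $D(\delta_{V_i})$, another hits a right endpoint), not one forward and one backward fact as your write-up assumes. Your dichotomy ``if both are singularities \dots\ otherwise $x$ lies on the orbit of $0$ or $|\lambda|$'' is therefore not available as stated. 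The paper avoids this by first replacing $W$ by a suitable $S^k(W)$ so that a \emph{one-sided} cylinder is already trivial, choosing $m$ minimal, and then showing that both $x$ and $\delta_{V_{m-1}}\circ\cdots\circ\delta_{V_0}(x)$ lie in $\{0,|\lambda|\}\cup\{\text{forward }\delta\text{-singularities}\}$; running through the possibilities reproduces precisely the configurations already excluded in the proof of Proposition~\ref{prop:ckeane}. Your treatment of the ``two nondegenerate cylinders meeting in a point'' case is essentially this same reasoning reorganised and is fine (indeed the $|\lambda|$ option is in fact vacuous there, as $f^i(x)\in[0,|\lambda|)$); it is only the singleton-cylinder case that needs the patch.
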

	\begin{proof}
	We first prove that $\Sd(0)$ contains only one point. Since $0$ is not a $\delta$-singularity, then
	the point $f^{-1}(0)$ (resp. $f(0)$) is its only pre-image (resp. image) under the action of $\delta$.
	Observe that $f^{-1}(0)$ (resp. $f(0)$) is a forward (resp. backward) $\delta$-singularity.
	The Keane condition then implies that for any $i > 0$ the point $f^{-i}(0)$ (resp. $f^i(0)$)
	is not a backward (resp. forward) $\delta$-singularity and we conclude $\Sd(0)$ contains only one point $Z$.

	Let $W=(U, V)$ be a point of $\overline{\{S^n(Z); n\in \mathds{Z}\}}$. For any $n\in \mathds{N}$,
	define $D_n = D(\delta_{V_n}\circ \dots \circ \delta_{V_0})\cap D(\delta_{U_n}\circ \dots \circ \delta_{U_0})$
	(where $D$ denotes the domain);
	note that $D_n$ is a closed interval and that $D_{n+1}\subset D_n$.
	Moreover, for any $n\in \mathds{N}$, there exists a point
	$W'=(U', V')\in \{S^p(Z); p\in \mathds{Z}\}$ such that, for any $0\le i\le n$, we have
	$V_i' = V_i$ and $U_i'=U_i$, meaning $D_n$ is not empty. We conclude $\bigcap\limits_{n\in \mathds{N}} D_n$
	contains (at least) one point $x$ with $\Sd(x)=W$, and $\overline{\{S^n(Z); n\in \mathds{Z}\}}\subset \Sd$.

	Let $W=(U, V)$ be a point of $\Sd$. We define again
	$D_n = D(\delta_{V_n}\circ \dots \circ \delta_{V_0})\cap D(\delta_{U_n}\circ \dots \circ \delta_{U_0})$
	and we prove the following lemma.
	\begin{lem}
	For any $n\in \mathds{N}$, the set $D_n$ is a non trivial (not a singleton) closed interval.
	\end{lem}
	\begin{proof}
	It is obvious that $D_n$ is a non empty (since $W\in \Sd$) closed interval for any $n\in \mathds{N}$.
	Suppose $D_n$ is trivial for some $n$. We may simply assume there exists an integer $m\ge 2$
	such that $D(\delta_{V_m}\circ \dots \circ \delta_{V_0}) = \{x\}$ is trivial (if it is not
	the case, we can work with $S^k(W)$ for some $k\in \mathds{Z}$). Also assume
	$m$ is minimal in the sense that neither $D(\delta_{V_m}\circ \dots \circ \delta_{V_1})$
	nor $D(\delta_{V_{m-1}}\circ \dots \circ \delta_{V_0})$ is trivial.

	In that case, $x$ is either $0$ or $|\lambda|$ or a forward $\delta$-singularity.
	If it were not, we could choose a positive real number $\tau$ such that
	$(x-\tau,~x+\tau)\subset D(\delta_{V_0})$, and since $D(\delta_{V_m}\circ \dots \circ \delta_{V_1})$
	is not trivial and contains $\delta_{V_0}(x)$, the set $D(\delta_{V_m}\circ \dots \circ \delta_{V_0})$ would
	not be trivial.

	Similarly, the point $\delta_{V_{m-1}}\circ \dots \circ \delta_{V_0} (x)$ must also be
	either $0$ or $|\lambda|$ or a forward $\delta$-singularity. If it were not,
	we could again choose $\tau$ so that $(y-\tau,~y+\tau)\subset D(\delta_{V_m})$ with $y = \delta_{V_{m-1}}\circ \dots \circ \delta_{V_0}(x)$,
	resulting in a non trivial set $D(\delta_{V_m}\circ \dots \circ \delta_{V_0})$.

	This contradicts proposition \ref{prop:ckeane}.
	\end{proof}
	Recall from \cite{Kea1}, that since $f$ satisfies the Keane condition, $f$ is minimal. In particular,
	for any $n\in \mathds{N}$, there exists $k_n\in \mathds{N}$ such that $f^{k_n}(0)\in D_n$.
	We choose $k_n\ge n+1$ to avoid problems related to the fact that $f(0)$ is a backward
	$\delta$-singularity. Define $S^{k_n}(Z) = W^{(n)}$ for any $n\in \mathds{N}$, observe $(W^{(n)})_n$
	converges to $W$ and conclude.
	\end{proof}

	From now on, we only work with CIETs. A CIET $\delta$ will simply be defined as a pair $(\pi, \lambda)$,
	and following from proposition \ref{prop:ckeane}, we say $\delta$ verifies the Keane condition if
	$\Sd(x)$ contains at most two points for any $x\in [0,~|\lambda|]$.

	\subsection{Rauzy induction}\label{subsec:rauzyind}
	The following constructions are given (for IETs) in \cite{Rau}.
	Let $\delta=(\pi, \lambda)$ be a CIET. Define $\alpha_0 = \pi_0^{-1}(N-1)$ and $\alpha_1 = \pi_1^{-1}(N-1)$
	and assume $\lambda_{\alpha_0}\ne \lambda_{\alpha_1}$ (note this is automatically true
	if $\delta$ satisfies the Keane condition).

	Suppose $\lambda_{\alpha_0} > \lambda_{\alpha_1}$. We say $\delta$ has \textbf{type} $\boldsymbol{0}$.
	The \textbf{completed Rauzy induction} of $\delta$ is the CIET $\delta'=(\pi', \lambda')$ defined by
	\begin{itemize}
		\item $\pi_0' = \pi_0$,
		\begin{itemize}
			\item $\forall a\in A_N;~\pi_1(a)\le \pi_1(\alpha_0),~\pi_1'(a) = \pi_1(a)$,
			\item $\pi_1'(\alpha_1) = \pi_1(\alpha_0)+1$,
			\item $\forall a\in A_N;~\pi_1(\alpha_0) < \pi_1(a) < \pi_1(\alpha_1),~\pi_1'(a)=\pi_1(a)+1$.
		\end{itemize}
		\item $\lambda_{\alpha_0}' = \lambda_{\alpha_0}-\lambda_{\alpha_1}$ and $\forall a\ne \alpha_0,~\lambda_a'=\lambda_a$.
	\end{itemize}
	If we simply name $I_a'$ the domain of $\delta_a'$ for any $a\in A_N$, then 
	$\delta_{\alpha_1}'(I_{\alpha_1}') = \delta_{\alpha_0}\circ \delta_{\alpha_1}(I_{\alpha_1}')$
	and $\delta_a'(I_a') = \delta_a(I_a')$ for any $a\ne \alpha_1$.
	\begin{figure}[h!]
	\begin{center}
		\scalebox{1}{\includegraphics{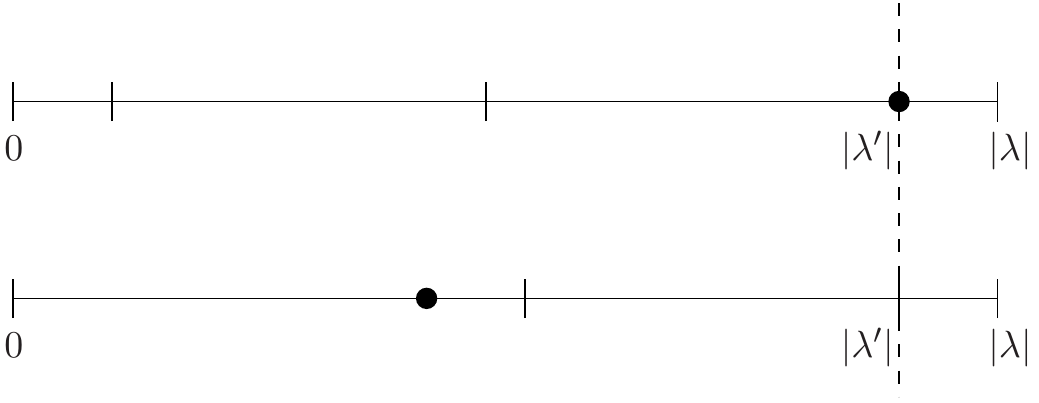}}
	\end{center}
	\caption{An example of type $0$ induction.}
	\label{fig:type0}
	\end{figure}

	If $\lambda_{\alpha_0} < \lambda_{\alpha_1}$, then we say $\delta$ has \textbf{type} $\boldsymbol{1}$.
	The \textbf{completed Rauzy induction} of $\delta$ is the CIET $\delta'=(\pi', \lambda')$ defined by
	\begin{itemize}
		\item $\pi_1' = \pi_1$,
		\begin{itemize}
			\item $\forall a\in A_N;~\pi_0(a)\le \pi_0(\alpha_1),~\pi_0'(a) = \pi_0(a)$,
			\item $\pi_0'(\alpha_0) = \pi_0(\alpha_1)+1$,
			\item $\forall a\in A_N;~\pi_0(\alpha_1) < \pi_0(a) < \pi_0(\alpha_0),~\pi_0'(a)=\pi_0(a)+1$.
		\end{itemize}
		\item $\lambda_{\alpha_1}' = \lambda_{\alpha_1}-\lambda_{\alpha_0}$ and $\forall a\ne \alpha_1,~\lambda_a'=\lambda_a$.
	\end{itemize}
	Simply naming $I_a'$ the domain of $\delta_a'$ for any $a\in A_N$, we have
	$\delta_{\alpha_0}'(I_{\alpha_0}') = \delta_{\alpha_0}\circ \delta_{\alpha_1}(I_{\alpha_0}')$
	and $\delta_a'(I_a') = \delta_a(I_a')$ for any $a\ne \alpha_0$.
	\begin{figure}[h!]
	\begin{center}
		\scalebox{1}{\includegraphics{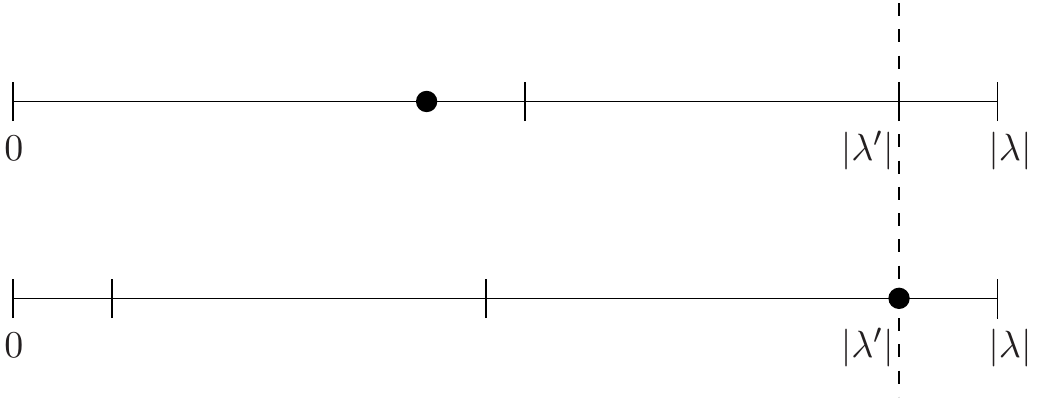}}
	\end{center}
	\caption{An example of type $1$ induction.}
	\label{fig:type1}
	\end{figure}

	We write $\delta' = R(\delta)$ in both cases.

	\begin{rmk}\label{rmk:firstreturn}
	Observe that the induction can be seen as a first return system on $[0,~|\lambda'|]$ except for one point: the left end point of
	$I_{\alpha_1}'$ (resp. $I_{\alpha_0}'$) if $\delta$ has type $0$ (resp. type $1$).
	\end{rmk}

		\subsubsection{Rauzy classes}\label{subsubsec:rauzyclasses}
		A pair $\pi=(\pi_0, \pi_1)$ is called \textbf{reducible} if $\pi_1\circ\pi_0^{-1}(\{0, \dots, k\}) = \{0, \dots, k\}$ for some
		$k<N-1$. Irreducibility is a consequence of the Keane condition, and in fact, we are only interested in irreducible pairs.

		Given irreducible pairs $\pi=(\pi_0, \pi_1)$ and $\pi'=(\pi_0', \pi_1')$ where $\pi_0, \pi_1, \pi_0', \pi_1':A_N\to \{0, \dots, N-1\}$
		are bijections, we say that $\pi'$ is a successor of $\pi$ if there exist two vectors $\lambda, \lambda'$ of $\mathds{R}^N$ with positive entries
		such that $R(\pi, \lambda) = (\pi', \lambda')$. Any pair $\pi$ has exactly two successors, corresponding to types $0$ and $1$,
		and any pair $\pi'$ is the successor of exactly two pairs. This relation defines a partial order on the sets of irreducible pairs
		that may be represented by a directed graph called the \textbf{Rauzy graph}; obviously, there is one graph per value of $N$.
		The connected components of such graphs are called \textbf{Rauzy classes}. An example is given on figure \ref{fig:rauzy}.

		We now assume $\delta=(\pi, \lambda)$ is a CIET satisfying the Keane condition. Observe then that for any $n\in \mathds{N}$,
		the map $R^n(\delta)$ is defined and that $\delta$ determines an infinite path in one of the Rauzy classes.
		Moreover, each edge of the Rauzy graph comes with an elementary automorphism (a Dehn twist) intended to specify
		the transition from one subshift to the other (see figure \ref{fig:rauzy}).
		Define $R(\delta) = \delta' = (\pi', \lambda')$ and $\alpha_0 = \pi_0^{-1}(N-1), \alpha_1 = \pi_1^{-1}(N-1)$.
		If $\delta$ has type $0$, define the automorphism $\sigma$ of $F_N$ by
		\begin{center}
			\begin{tabular}{cccccl}
			$\sigma$ & $:$ & $\alpha_1$ & $\mapsto$ & $\alpha_1\alpha_0$ &\\
			&& $a$ & $\mapsto$ & $a$ & if $a\ne \alpha_1$.
			\end{tabular}
		\end{center}
		If $\delta$ has type $1$, define the automorphism $\sigma$ of $F_N$ by
		\begin{center}
			\begin{tabular}{cccccl}
			$\sigma$ & $:$ & $\alpha_0$ & $\mapsto$ & $\alpha_1\alpha_0$ &\\
			&& $a$ & $\mapsto$ & $a$ & if $a\ne \alpha_0$.
			\end{tabular}
		\end{center}
		We obtain the following proposition.
		\begin{prop}\label{prop:recoding}
		For any point $x\in [0,~|\lambda'|)$, we have $\Sd(x) = \{\partial^2\sigma(W); W\in \Sdd(x)\}$.
		The set $\Sdd(|\lambda'|)$ contains only one point $W$ and $\partial^2 \sigma(W)\in \Sd(|\lambda'|)$.
		\end{prop}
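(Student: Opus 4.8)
The plan is to unwind the definition of $\Sd(x)$ in terms of nested domains of compositions of the isometries $\delta_a$, and to match these with the compositions of $\delta_a'$ using the relations recorded at the end of section \ref{subsec:rauzyind}, namely $\delta_{\alpha_1}'(I_{\alpha_1}') = \delta_{\alpha_0}\circ\delta_{\alpha_1}(I_{\alpha_1}')$ (type $0$ case) and $\delta_{\alpha_0}'(I_{\alpha_0}') = \delta_{\alpha_0}\circ\delta_{\alpha_1}(I_{\alpha_0}')$ (type $1$ case), together with $\delta_a' = \delta_a$ on $I_a'$ otherwise. I will treat the type $0$ case in detail; the type $1$ case is symmetric, swapping the roles of the two coordinates and of $\alpha_0,\alpha_1$. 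Throughout, note that $[0,|\lambda'|]\subset[0,|\lambda|]$ and that, by remark \ref{rmk:firstreturn}, for $x$ in the (half-open) interval $[0,|\lambda'|)$ the induced CIET $\delta'$ genuinely realizes the first-return map of $\delta$ to $[0,|\lambda'|]$.

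First I would establish the coordinate-wise recoding. Fix $x\in[0,|\lambda'|)$ and a point $W=(U,V)\in\Sdd(x)$. Consider the forward coordinate $V$. Reading $V$ letter by letter, the orbit $\delta_{V_n}'\circ\dots\circ\delta_{V_0}'(x)$ stays in $[0,|\lambda'|]$ (by definition of $\Sdd(x)$), so it is a subsequence of the $\delta$-orbit of $x$: each application of $\delta'_{\alpha_1}$ is replaced by the two-step block $\delta_{\alpha_0}\circ\delta_{\alpha_1}$, and every other letter $a$ acts as $\delta_a$. Since $\sigma(\alpha_1)=\alpha_1\alpha_0$ and $\sigma(a)=a$ for $a\neq\alpha_1$, this substitution of letters is exactly the action of $\sigma$ on the infinite word $V$: the image $\partial\sigma$-coordinate of $W$ is the word obtained by replacing each occurrence of $\alpha_1$ in $V$ by $\alpha_1\alpha_0$ and leaving other letters fixed. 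Hence the $\delta$-orbit read along $\partial\sigma(V)$ is well-defined and stays in $D(\delta_{(\partial\sigma V)_n}\circ\dots\circ\delta_{(\partial\sigma V)_0})$ for all $n$, i.e. the domain conditions for membership in $\Sd(x)$ on the forward side hold. For the backward coordinate one runs the same argument with inverses, using $\delta_{a^{-1}}=\delta_a^{-1}$; since $\sigma$ fixes $\alpha_0$, the letter $\alpha_0$ in $U$ gets no extra expansion beyond what is forced, and one checks $\partial\sigma$ acts on $U$ compatibly. This gives $\partial^2\sigma(W)\in\Sd(x)$, hence the inclusion $\{\partial^2\sigma(W); W\in\Sdd(x)\}\subseteq\Sd(x)$.

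For the reverse inclusion, take $(X,Y)\in\Sd(x)$ with $x\in[0,|\lambda'|)$. Since $x<|\lambda'|$, the first-return description of remark \ref{rmk:firstreturn} applies, so the full $\delta$-orbit of $x$ decomposes uniquely into return blocks to $[0,|\lambda'|]$; each such block is either a single letter $a\neq\alpha_1$ or the block $\alpha_1\alpha_0$ (on the forward side) — this is precisely the structure of a word in the image of $\partial\sigma$. Therefore $Y$ lies in the image of $\partial\sigma$ and $X$ in the image of the inverse-letter analogue, so $(X,Y)=\partial^2\sigma(W)$ for a (necessarily unique, as $\sigma$ is injective) $W=(U,V)$; reading the return times shows $W\in\Sdd(x)$. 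The point requiring care here is that the block decomposition really is forced and unique — this is where I expect the main obstacle, since at the excluded left endpoint of $I_{\alpha_1}'$ the first-return picture degenerates, and one must check this endpoint is not in $[0,|\lambda'|)$ (it is, so one must instead verify that excluding it from the statement's range is exactly what avoids the problem, which is why the proposition is stated for $x\in[0,|\lambda'|)$ rather than the closed interval).

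Finally, for the endpoint assertion: $|\lambda'|$ is not a $\delta'$-singularity (singularities are the interior points $\Lambda_0'(a),\Lambda_1'(a)$), so it has a unique $\delta'$-preimage and a unique $\delta'$-image, and the Keane-type argument of theorem \ref{thm:mainresult} — applied at $|\lambda'|$ in place of $0$, which is legitimate since $|\lambda'|$ plays the symmetric role of a non-singular endpoint — shows $\Sdd(|\lambda'|)$ is a single point $W$. To see $\partial^2\sigma(W)\in\Sd(|\lambda'|)$, note $|\lambda'|\in[0,|\lambda|]$ and rerun the forward-direction block-substitution argument of the second paragraph; the only subtlety is that $|\lambda'|$ sits at the boundary, but since it is not a $\delta$-singularity either (in the type $0$ case $|\lambda'|=|\lambda|-\lambda_{\alpha_1}$ is a point of the $\delta$-orbit of the right endpoint, not a cut point), the domain inclusions $D(\delta'_{\cdots})\subseteq D(\delta_{\cdots})$ used above still hold at this point, giving the claim. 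I would then remark that the type $1$ case follows by the evident symmetry exchanging the two coordinates, $\alpha_0\leftrightarrow\alpha_1$, and $\pi_0\leftrightarrow\pi_1$.
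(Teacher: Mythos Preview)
The paper does not supply a proof of this proposition; it is stated as following immediately from the definition of the Dehn twist $\sigma$ and the relations $\delta_{\alpha_1}' = \delta_{\alpha_0}\circ\delta_{\alpha_1}$ (type $0$) or $\delta_{\alpha_0}' = \delta_{\alpha_0}\circ\delta_{\alpha_1}$ (type $1$) recorded just before it. The only commentary the paper offers is the note immediately after the proposition, explaining why the equality fails at $|\lambda'|$.

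Your block-substitution approach is the natural one and would yield a proof, but your handling of the exceptional points contains two genuine errors.

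First, you claim that $|\lambda'|$ is not a $\delta$-singularity. This is false: in the type $0$ case, $|\lambda'| = |\lambda| - \lambda_{\alpha_1} = \Lambda_1(\alpha_1)$, which is precisely a backward $\delta$-singularity. That is exactly why $\Sd(|\lambda'|)$ contains two points (as the paper's note emphasises), and why the proposition asserts only an inclusion at $|\lambda'|$, not an equality. The second point $(U,V)\in\Sd(|\lambda'|)$ has $U_0=\alpha_1^{-1}$ and $V_0=\alpha_0$, and $\partial^2\sigma^{-1}(U,V)$ fails to lie in $\Sdd$ because its first $U$-coordinate letter lands in $A_N$ rather than $A_N^{-1}$.

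Second, your discussion of the left endpoint of $I_{\alpha_1}'$ is internally inconsistent. You say one must check this point is not in $[0,|\lambda'|)$, then acknowledge it is, then conclude that this explains the half-open restriction---but restricting to $[0,|\lambda'|)$ does not exclude $\Lambda_0(\alpha_1)$, which is strictly below $\Lambda_0(\alpha_0) < |\lambda'|$. The first-return discrepancy of remark \ref{rmk:firstreturn} is a different phenomenon from the one forcing the half-open interval. The recoding does work at $\Lambda_0(\alpha_1)$: one has $\delta_{\alpha_1}(\Lambda_0(\alpha_1)) = |\lambda'|$, which lies in the interior of $D(\delta_{\alpha_0})$, so the next forward letter is forced to be $\alpha_0$ and the block $\alpha_1\alpha_0$ is still uniquely determined there. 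The reason $|\lambda'|$ itself is excluded from the equality is the backward-singularity issue of the previous paragraph, not the first-return issue.
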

		Note that $\Sd(|\lambda'|)$ contains two points. However, the point $|\lambda'|$
		is the only point of $[0,~|\lambda'|]$ such that $\Sd(|\lambda'|)$ contains a point $(U, V)$ with
		$U_0 = \alpha_1^{-1}$ and $V_0= \alpha_0$. This means $(U', V') = \partial^2 \sigma^{-1}(U, V)$ cannot be a point
		of $\Sdd$; in particular the first letter of $U'$ (resp. $V'$) if $\delta$ has type $0$ (resp. type $1$)
		is in $A_N$ (resp. $A_N^{-1}$).

		We end this section with an example of Rauzy class, complete with the Dehn twists associated with each transition.\\
		\begin{figure}[h!]
		\begin{center}
			\scalebox{0.9}{\includegraphics{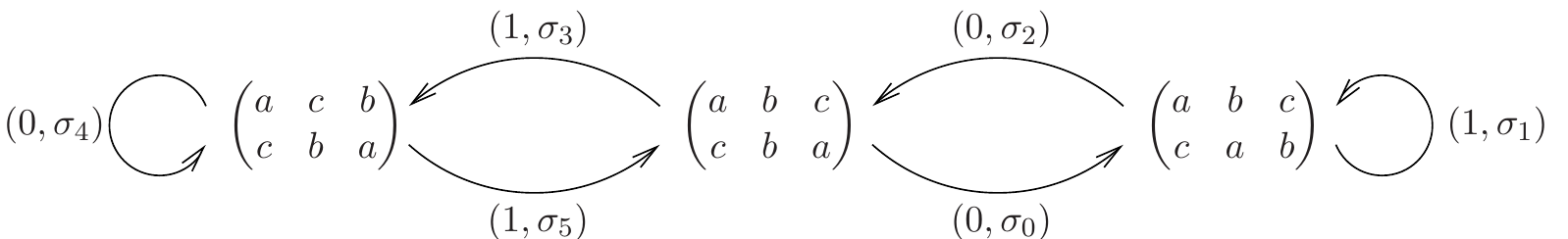}}
		\end{center}
		\caption{An example of Rauzy class.}
		\label{fig:rauzy}
		\end{figure}\\
		Each edge is labeled with a pair $(\tau, \sigma)$ representing the type and the associated recoding automorphism;
		these automorphisms are defined below.
		\begin{center}
		\begin{tabular}{llllllllllll}
			$\sigma_0:$ & $a\mapsto ac$ & $\sigma_1:$ & $a\mapsto a$ & $\sigma_2:$ & $a\mapsto a$ & $\sigma_3:$ & $a\mapsto a$ & $\sigma_4:$ & $a\mapsto ab$ & $\sigma_5:$ & $a\mapsto a$\\
			& $b\mapsto b$ && $b\mapsto b$ && $b\mapsto bc$ && $b\mapsto b$ && $b\mapsto b$ && $b\mapsto ab$\\
			& $c\mapsto c$ && $c\mapsto bc$ && $c\mapsto c$ && $c\mapsto ac$ && $c\mapsto c$ && $c\mapsto c$\\
		\end{tabular}
		\end{center}

		\subsubsection{Rauzy induction and singularities}
		In this section, we study the evolution of the set of singularities under the action of the Rauzy induction.
		We assume $\delta = (\lambda, \pi)$ with $\pi=(\pi_0, \pi_1)$ and $\pi_0, \pi_1: A_N\to \{0, \dots, N-1\}$ is a CIET which satisfies the Keane condition and we define
		$\delta'=R(\delta)$, $\alpha_0 = \pi_0^{-1}(N-1)$ and $\alpha_1 = \pi_1^{-1}(N-1)$. Observe that if $S_\delta$ (resp. $S_{\delta'}$) is the set
		of $\delta$-singularities (resp. $\delta'$-singularities) then $S_{\delta'}\setminus S_\delta$ (resp. $S_\delta\setminus S_{\delta'}$)
		contains exactly one point $x$ (resp. $y$).
		
		The point $y$ is obviously the right most $\delta$-singularity; it is the left end point of the domain of $\delta_{\alpha_1^{-1}}$
		(resp. $\delta_{\alpha_0}$) if $\delta$ has type $0$ (resp. $1$).

		By definition of the induction, the point $x$ is the left end point of the domain of $\delta'_{\alpha_1^{-1}}$ (resp. $\delta'_{\alpha_0}$)
		and we have $x = \delta_{\alpha_0}(y)$ (resp. $x = \delta_{\alpha_1^{-1}}(y)$) if $\delta$ has type $0$ (resp. $1$).

		Observe that, as a consequence of the Keane condition, the point $x$ only has one image and one pre-image by $\delta$ (regardless of type).
		The following proposition summarizes the discussion above and will be useful later.
		\begin{prop}\label{prop:newsing}
		If $\delta$ has type $0$ (resp. type $1$), then the pre-image (resp. image) by $\delta$ of the left end point of the domain
		of $\delta_{\alpha_1^{-1}}'$ (resp. $\delta_{\alpha_0}'$) is the left end point of the domain of $\delta_{\alpha_1^{-1}}$
		(resp. $\delta_{\alpha_0}$).
		\end{prop}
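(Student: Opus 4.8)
The plan is to unwind the definitions of the completed Rauzy induction given in section \ref{subsec:rauzyind} and match them against the description of the new singularity $x$ given just above the statement. First I would treat the type $0$ case; the type $1$ case is symmetric and I would obtain it by the obvious left-right duality (swapping the roles of $\pi_0,\pi_1$, of $\alpha_0,\alpha_1$, and of forward/backward singularities, as is done throughout this subsection). So assume $\lambda_{\alpha_0}>\lambda_{\alpha_1}$.

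The key geometric fact is remark \ref{rmk:firstreturn}: the induction is a first return map on $[0,~|\lambda'|]$ where $|\lambda'| = |\lambda| - \lambda_{\alpha_1}$, i.e.\ we have removed the rightmost interval $J_1 = D(\delta_{\alpha_1^{-1}})$ of length $\lambda_{\alpha_1}$ from the right end of $[0,~|\lambda|]$. By the formula $\delta_{\alpha_1}'(I_{\alpha_1}') = \delta_{\alpha_0}\circ\delta_{\alpha_1}(I_{\alpha_1}')$ and $\delta_a'(I_a')=\delta_a(I_a')$ for $a\ne\alpha_1$, the only isometry that changes is the one indexed by $\alpha_1$: its image interval is now $\delta_{\alpha_0}(D(\delta_{\alpha_1^{-1}}))$ sitting inside $[\Lambda_1(\alpha_0),~\Lambda_1(\alpha_0)+\lambda_{\alpha_0}]$, i.e.\ where the old $\alpha_0$-image block used to be. Consequently the domain $D(\delta_{\alpha_1^{-1}}')$ of $\delta_{\alpha_1^{-1}}' = (\delta_{\alpha_1}')^{-1}$ is exactly $\delta_{\alpha_0}\circ\delta_{\alpha_1}(D(\delta_{\alpha_1^{-1}}'))$, which lies in the range of $\delta_{\alpha_0}$; its left end point $x$ therefore satisfies $x = \delta_{\alpha_0}(y')$ where $y'$ is the left end point of $D(\delta_{\alpha_1^{-1}}')$ pulled back by $\delta_{\alpha_0}$. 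But $\delta_{\alpha_0}^{-1}$ applied to $x$ lands at the left end point of the $\alpha_0$-image block minus the part still occupied after induction; a direct length bookkeeping ($\lambda_{\alpha_0}' = \lambda_{\alpha_0}-\lambda_{\alpha_1}$, and the $\alpha_1$-block of width $\lambda_{\alpha_1}$ is reinserted immediately after $\pi_1(\alpha_0)$) shows this is precisely $\Lambda_0(\alpha_1)$, the left end point of $D(\delta_{\alpha_1})$, equivalently the left end point of the domain of $\delta_{\alpha_1^{-1}}$ once we recall $D(\delta_{\alpha_1^{-1}}) = \delta_{\alpha_1}(D(\delta_{\alpha_1}))$ and track the same translation. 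This gives $\delta_{\alpha_0}^{-1}(x) = y$, i.e.\ the pre-image by $\delta$ of the left end point of $D(\delta_{\alpha_1^{-1}}')$ is the left end point of $D(\delta_{\alpha_1^{-1}})$, which is the claim.

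Two small points need care. First, one must check that the pre-image in the statement is unambiguous, i.e.\ that $x$ has a single pre-image by $\delta$; this is exactly the observation recorded just before the proposition (a consequence of the Keane condition, since $x$ is the sole point of $S_{\delta'}\setminus S_\delta$ and lies in no forward orbit of a backward $\delta$-singularity by the argument of proposition \ref{prop:ckeane}), so I would simply cite it. Second, one must verify that the relevant point is attained by $\delta_{\alpha_0}$ and not, say, by $\delta_{\alpha_0^{-1}}$ or another index — this is where the type $0$ hypothesis $\lambda_{\alpha_0}>\lambda_{\alpha_1}$ is used, guaranteeing $\lambda_{\alpha_0}'>0$ so that the $\alpha_0$-block genuinely survives and $x$ really sits in its interior-or-left-boundary. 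The main obstacle is purely bookkeeping: carefully writing the positions $\Lambda_0'(a),\Lambda_1'(a)$ from the permutation formulas for $\pi'$ and confirming that the two candidate descriptions of $x$ — "left end point of $D(\delta_{\alpha_1^{-1}}')$" and "$\delta_{\alpha_0}$ applied to the left end point of $D(\delta_{\alpha_1^{-1}})$" — literally coincide as real numbers. I expect no conceptual difficulty beyond this, and a labelled figure (as in figures \ref{fig:type0} and \ref{fig:type1}) makes the identification transparent.
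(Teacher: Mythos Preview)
Your approach is correct and is essentially what the paper does: the paper does not give a separate proof but presents the proposition as a summary of the discussion immediately preceding it, the key point being that $\delta_{\alpha_1}' = \delta_{\alpha_0}\circ\delta_{\alpha_1}$ on $I_{\alpha_1}' = I_{\alpha_1}$ (the domain is unchanged since $\pi_0'=\pi_0$ and only $\lambda_{\alpha_0}$ shrinks) yields $D(\delta_{\alpha_1^{-1}}') = \delta_{\alpha_0}\bigl(D(\delta_{\alpha_1^{-1}})\bigr)$ directly, whence $x=\delta_{\alpha_0}(y)$, with uniqueness of the pre-image coming from the Keane condition as you note.

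One correction, however: your middle paragraph conflates $\Lambda_0(\alpha_1)$ (the left endpoint of $D(\delta_{\alpha_1})$) with $\Lambda_1(\alpha_1)$ (the left endpoint of $D(\delta_{\alpha_1^{-1}})$, i.e.\ of the \emph{range} of $\delta_{\alpha_1}$). The pre-image $\delta_{\alpha_0}^{-1}(x)$ is the latter, equal to $|\lambda|-\lambda_{\alpha_1}$, not the former. The ``length bookkeeping'' detour is where this confusion creeps in and is unnecessary: the one-line range identity you yourself state at the end of your plan is the entire argument, and is exactly what the paper records.
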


		We deduce that the pre-image (resp. image) of $x$ by $\delta$ is a backward (resp. forward) $\delta$-singularity if $\delta$ has type $0$ (resp. type $1$),
		meaning $\Sd(x) = \{(U, V), (U', V')\}$ will always contain two points. Observe that we have
		$U_0U_1 = \alpha_0^{-1}\alpha_1^{-1}$ and $U_0'U_1'= \alpha_0^{-1}\beta_0^{-1}$ with $\pi_1(\beta_0)=N-2$
		(resp. $V_0V_1 = \alpha_1\alpha_0$ and $V_0'V_1'= \alpha_1\beta_1$ with $\pi_0(\beta_1)=N-2$)
		if $\delta$ has type $0$ (resp. type $1$), and conclude $\Sdd(x) = \{\partial^2 \sigma^{-1}(U, V), \partial^2 \sigma^{-1}(U', V')\}$
		(where $\sigma$ is the Dehn twist associated with the induction, as defined in the previous section) effectively is a $\delta'$-singularity.

	\subsection{Self-induced CIETs}\label{subsec:siciet}

	We now relate $A_N$-positive primitive automorphisms with CIETs by introducing self-induced CIETs.
	We say that a CIET $\delta^{(0)}=(\pi^{(0)}, \lambda^{(0)})$ is \textbf{self-induced} if there exists a positive integer $n$ such
	that $R^n(\delta^{(0)}) = (\pi^{(n)}, \lambda^{(n)})$ with 
	\begin{itemize}
		\item $\pi^{(n)} = \pi^{(0)}$,
		\item $\lambda^{(n)} = \eta\lambda^{(0)}$ for some positive real number $\eta$.
	\end{itemize}
	Alternatively, one can say a CIET is self-induced if its associated path in the Rauzy graph
	is periodic. The path $(\pi^{(0)}, \pi^{(1)}, \dots, \pi^{(n)}=\pi^{(0)})$ is a cycle of the Rauzy graph.
	Assume it is minimal: it is not a power of a smaller cycle. Suppose $\sigma_i$ is the automorphism
	associated (as in the previous section) to the edge $(\pi^{(i)}, \pi^{(i+1)})$ for any $0\le i < n$.

	We define the $\boldsymbol{\delta}$\textbf{-automorphism} $\varphi$ as the automorphism $\varphi = \sigma_0\circ \dots\circ \sigma_{n-1}$.
	The automorphism $\varphi$ is obviously $A_N$-positive. Moreover,
	we get from \cite[corollary 3]{Yoc} that $\delta$ satisfies the Keane condition, and from
	\cite[corollary 4]{Yoc} that $\varphi$ is primitive.

	\begin{thm}\label{thm:othermainresult}
	Let $\delta = (\pi, \lambda)$ be a self-induced CIET and let $\varphi$ be the associated $\delta$-automorphism.
	Define the $\delta$-subshift $\Sd$ and the attracting subshift $\Sv$ of $\varphi$. We have
	$\Sd = \Sv$.
	\end{thm}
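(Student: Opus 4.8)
The plan is to prove the two inclusions $\Sd\subset\Sv$ and $\Sv\subset\Sd$ separately, using the periodicity of the Rauzy induction together with Proposition \ref{prop:recoding} applied iteratively. Write $\delta^{(0)}=\delta$ and $\delta^{(i)}=R^i(\delta^{(0)})$ for $0\le i\le n$, so that $\delta^{(n)}=(\pi,\eta\lambda)$ is, up to rescaling the interval, the same CIET as $\delta^{(0)}$; in particular $\Sigma_{\delta^{(n)}}=\Sd$ since the $\delta$-subshift is a subset of $\partial^2F_N$ and is unchanged by the homothety $x\mapsto\eta x$. Proposition \ref{prop:recoding} gives, for each $i$, a relation $\Sigma_{\delta^{(i)}}(x)=\{\partial^2\sigma_i(W);W\in\Sigma_{\delta^{(i+1)}}(x)\}$ on the common interval $[0,|\lambda^{(i+1)}|)$, plus a boundary correction at the right end point. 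Composing these over one period, and using $\varphi=\sigma_0\circ\dots\circ\sigma_{n-1}$, I would first establish that $\partial^2\varphi$ maps $\Sd$ into itself: starting from a point $W\in\Sd$ realized at some $x\in[0,|\lambda|)$, push it through the $n$ recodings to land in $\Sigma_{\delta^{(n)}}=\Sd$, taking care of the finitely many points sitting over the right end points at each stage (these are exactly the points whose first letters are ``wrong'' in the sense discussed after Proposition \ref{prop:recoding}, and they are handled by shifting, i.e. replacing $W$ by $S^k(W)$, which is legitimate since both $\Sd$ and $\Sv$ are $S$-invariant).

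For the inclusion $\Sv\subset\Sd$: by Theorem \ref{thm:mainresult}, $\Sd=\overline{\{S^n(Z);n\in\mathds{Z}\}}$ where $\{Z\}=\Sd(0)$. I would identify the coding $Z$ of the orbit of $0$ with (a shift of) the fixed point defining $\Sv$. Concretely, iterating Proposition \ref{prop:recoding} once around the loop sends $Z$, the coding of $0$ for $\delta^{(n)}=\delta$ rescaled, to $\partial^2\varphi(Z)$ up to the boundary correction; since $0$ is fixed by all the $\delta^{(i)}$ (it is never a singularity and $f^{(i)}(0)$ involves only left extensions), one gets that $\partial^2\varphi$ fixes the pair of half-infinite words obtained from $Z$, matching the defining limits $X=\lim p^{-1}\varphi^k(p^{-1})\cdots$ and $Y=\lim as\varphi^k(s)\cdots$ of Section \ref{subsec:attsub} (here $k=1$ works because $\varphi$ is already the full period). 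Because $\Sv$ is by definition the closure of the $S$-orbit of this fixed point, and $Z$ (or a bounded shift of it) is that fixed point, $S$-invariance and closedness of $\Sd$ give $\Sv\subset\Sd$.

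For the reverse inclusion $\Sd\subset\Sv$, having shown $\partial^2\varphi(\Sd)\subset\Sd$ and that $\Sd$ is closed and $S$-invariant, I would argue that $\Sd$ is the \emph{attractor}: any point of $\Sd$ is realized at some $x\in[0,|\lambda|]$, and by minimality of $f$ (which holds since $\delta$ satisfies the Keane condition, via \cite[corollary 3]{Yoc}) the forward $S$-orbit of $Z$ comes arbitrarily close to $W$, exactly as in the last paragraph of the proof of Theorem \ref{thm:mainresult}; hence $W\in\overline{\{S^n(Z)\}}\subset\Sv$ using the previous paragraph. The main obstacle I anticipate is the bookkeeping of the right–end–point exceptions in Proposition \ref{prop:recoding} as one composes $n$ inductions: one must check that the finitely many ``bad'' codings appearing along the loop are precisely absorbed by passing to a shift, so that no point of $\Sd$ is lost or spuriously added, and that the rescaling $\lambda^{(n)}=\eta\lambda$ genuinely makes $\delta^{(n)}$ the original CIET on the nose (up to the harmless homothety) rather than merely a combinatorially equivalent one. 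Once that matching of singularities across the period is nailed down — which is the content alluded to in the introduction, ``it matches the singularities of the CIET with the singularities of its associated automorphism'' — both inclusions follow.
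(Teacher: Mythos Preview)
Your approach is essentially the paper's: reduce to showing that the unique point $Z\in\Sd(0)$ lies in $\Sv$, via Theorem \ref{thm:mainresult} and the identity $\partial^2\varphi(Z)=Z$ obtained by iterating Proposition \ref{prop:recoding} through one period. You correctly note that $0$ never coincides with the exceptional right end point $|\lambda^{(i)}|$ at any stage, so the boundary correction in Proposition \ref{prop:recoding} is irrelevant for tracking $Z$; consequently your worry about bookkeeping those corrections for arbitrary points of $\Sd$, and the auxiliary step $\partial^2\varphi(\Sd)\subset\Sd$, are unnecessary --- the paper tracks only the single point $0$.

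There is, however, one genuine gap. Knowing $\partial^2\varphi(Z)=Z$ with $Z=(X,Y)$ positive does \emph{not} by itself make $Z$ a defining fixed point of $\Sv$ in the sense of Section \ref{subsec:attsub}, and your claim that ``$k=1$ works'' fails: since $Y_0$ is the first letter of $\varphi(Y_0)$, the decomposition $\varphi(Y_0)=pY_0s$ has $p=\epsilon$, which is not admissible. What must actually be checked is that the two-letter junction $X_0^{-1}Y_0$ lies in the language of $\varphi$ (is a factor of some $\varphi^k(a)$); once this holds, iterating $\varphi$ shows every factor of the bi-infinite word $X^{-1}Y$ is in the language, hence $Z\in\Sv$. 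The paper supplies this with a short minimality argument: $Y_0=\pi_0^{-1}(0)$ and $X_0^{-1}=\pi_1^{-1}(0)$, so $D(\delta_{Y_0})\cap D(\delta_{X_0})$ is a nontrivial interval containing $0$, and minimality of $f$ forces the forward orbit of $0$ to revisit it, yielding an occurrence of $X_0^{-1}Y_0$ inside $Y=\lim_n\varphi^n(Y_0)$. With that fix your argument goes through, and your separate paragraph for $\Sd\subset\Sv$ becomes redundant: once $Z\in\Sv$, Theorem \ref{thm:mainresult} gives $\Sd=\overline{\{S^n(Z);n\in\mathds{Z}\}}\subset\Sv$, while identifying $Z$ with a defining fixed point gives the reverse inclusion directly.
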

	\begin{proof}
	Recall that since $\delta$ satisfies the Keane condition, the set $\Sd(0)$ contains exactly one point $Z$.
	Thanks to theorem \ref{thm:mainresult}, we only need to prove $Z$ is in $\Sv$.

	Let $n$ be the smallest positive integer such that $R^n(\delta) = \delta' = (\pi', \lambda')$ with
	$\pi' = \pi$ and $\lambda' = \eta\lambda$ ($\eta>0$). Observe that $\Sd(0) = \Sdd(0) = \{Z\}$.
	We deduce from proposition \ref{prop:recoding} that $\partial^2 \varphi (Z) = Z$.
	Define $Z = (X, Y)$. Considering any letter $Y_n$ (resp. $X_n^{-1}$) is an element of $A_N$ and since $\varphi$ is $A_N$-positive
	primitive, it is easy to see that $Y = \lim\limits_{n\to +\infty} \varphi^n(Y_0)$ and $X = \lim\limits_{n\to +\infty} \varphi^n(X_0)$.
	To complete the proof, we also need to prove that the word $X_0^{-1}Y_0$ is contained in $\varphi^k(a)$ for some $k\ge 1$ and $a\in A_N$.
	By definition, $Y_0 = \pi_0^{-1}(0)$ and $X_0^{-1} = \pi_1^{-1}(0)$. Observe $D(\delta_{Y_0})\cap D(\delta_{X_0})$
	(where $D$ denotes the domain) is not trivial.
	By minimality, there exists a point $x$ in $D(\delta_{Y_0})\cap D(\delta_{X_0})$ and a positive integer $n$ such that
	$S^n(Z)\in \Sd(x)$, and we deduce the word $X_0^{-1}Y_0$ is contained in $\varphi^k(Y_0)$ for some $k$.
	We conclude $Z$ is a point of $\Sv$ and $\Sd = \Sv$.
	\end{proof}

	The ability to decompose a $\delta$-automorphism by following a path in a Rauzy class is one of the key ideas of the algorithm.
	Given a positive primitive automorphism $\varphi$, the algorithm will attempt to decompose $\varphi$ into
	Dehn twists by applying a combinatorial version of the Rauzy induction on its attracting subshift $\Sv$
	using the $\varphi$-singularities. An important issue is that multiple automorphisms may have the same attracting
	subshift (and therefore the same singularities) but may not all be decomposable along a path in a Rauzy class.
	Starting with an automorphism $\varphi$, one of the first step of the algorithm will be to choose an automorphism
	$\psi$ with $\Spsi=\Sv$ which we know will be a viable candidate for the decomposition.
	To that end, we study automorphisms that share their attracting subshifts with a $\delta$-automorphism in the following
	sections.

		\subsubsection{Singularities of a $\delta$-automorphism}

		First, we relate the singularities of the CIET $\delta$ with the singularities of the $\delta$-automorphism.

		Let $\delta$ be a self-induced CIET and $\varphi$ its $\delta$-automorphism.
		Observe that for any $\delta$-singularity $x$, the set $\Sd(x)$ is a $\varphi$-singularity:
		we can easily deduce that there must exist an automorphism $i_w\circ \varphi^k$
		such that $\partial^2 (i_w\circ \varphi^k)$ fixes all the points of $\Sd(x) = \{(U, V), (U', V')\}$
		by noticing that since $U=U'$ (resp. $V=V'$) if $x$ is a forward (resp. backward) $\delta$-singularity,
		we also have $\partial^2\varphi(U, V) = (X, Y)$ and $\partial^2\varphi(U', V') = (X', Y')$ with $X=X'$ (resp. $Y=Y'$).
		In fact, this is also true for $k=1$.
		\begin{prop}\label{prop:invsing}
		For any singularity $\Omega$, there exists $w\in F_N$ such that $i_w\circ \varphi$ fixes $\Omega$.
		\end{prop}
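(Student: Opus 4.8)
The plan is to reduce the general statement to the case of singularities that come from $\delta$-singularities, and then bootstrap the exponent down to $1$ using the periodicity of the Rauzy induction. First I would recall the structure established so far: by Theorem~\ref{thm:othermainresult} we have $\Sd=\Sv$, and by Proposition~\ref{prop:recoding} the $\delta$-automorphism $\varphi=\sigma_0\circ\dots\circ\sigma_{n-1}$ recodes the subshift of $R^n(\delta)=(\pi,\eta\lambda)$ back to that of $\delta$. The key observation is that a $\varphi$-singularity $\Omega$, being a finite set of points of $\Sv=\Sd$ sharing a coordinate (by the remark at the end of section~\ref{subsec:autsing}, when $\Omega$ is fixed by $i_w\circ\varphi^k$ with $w\neq\epsilon$, all points share a coordinate; and the case $w=\epsilon$ means $\varphi$ itself works with $k\ge 1$, which we then push down), must be ``located'' at a single point $x\in[0,|\lambda|]$ of the interval in the sense that $\Omega\subseteq\Sd(x)$. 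Indeed, since $\varphi$ acts on the geometric (interval) picture as the composition of the Rauzy maps, which is an affine contraction by the factor $1/\eta$ fixing exactly one point, and the points of $\Omega$ are all fixed by a power of $\partial^2\varphi$ (or by $\partial^2(i_w\circ\varphi^k)$), their common location on the interval must be the unique fixed point of the corresponding contraction; by Proposition~\ref{prop:ckeane} and Corollary~\ref{cor:ckeane}, $\Sd(x)$ has at most two points and they share a coordinate, matching the definition of a singularity.

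Next I would run the induction-on-the-exponent argument. Suppose $\psi=i_w\circ\varphi^k$ fixes $\Omega$ with $k\ge 2$ minimal (the third bullet of the definition of $\varphi$-singularity guarantees $\Omega$ is exactly the set of $\Sv$-points fixed by a power of $\psi$, so this is well posed). The point $x\in[0,|\lambda|]$ attached to $\Omega$ as above is a $\delta$-singularity (or an endpoint, but endpoints give the one-point set $\Sd(0)$ or $\Sd(|\lambda|)$, handled separately). Now apply Proposition~\ref{prop:recoding} repeatedly: the self-inducedness gives $R^n(\delta)=(\pi,\eta\lambda)$, so the maps $R^{jn}(\delta)$ are all CIETs with the same combinatorial data $\pi$, and the composed Dehn twist taking the subshift of $R^{jn}(\delta)$ to that of $R^{(j-1)n}(\delta)$ is exactly $\varphi$ each time. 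Hence $\varphi$ itself — not just a power — sends the ($\delta$-)singularity at the rescaled copy of $x$ to the $\delta$-singularity at $x$, and since there are only finitely many $\delta$-singularities (they are in bijection with the finitely many $\varphi$-singularities via $x\mapsto\Sd(x)$), some forward iterate of this correspondence is a genuine self-map: there is a conjugacy $i_{w'}$ with $i_{w'}\circ\varphi$ fixing all the points of $\Omega$. The conjugacy appears precisely because fixing a point $(U,V)\in\partial^2 F_N$ means fixing the bi-infinite word up to the shift, i.e.\ up to conjugation, so $\partial^2\varphi(U,V)$ and $(U,V)$ have the same $S$-orbit and differ by some $i_{w'}$; the finiteness of $\Omega$ lets us pick a single $w'$ working for all its points simultaneously (pass to a common power if necessary and then descend, or observe the $S$-translation amount is forced by the geometry).

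The main obstacle I expect is the bookkeeping that lets one pass from ``$i_w\circ\varphi^k$ fixes $\Omega$'' to ``$i_{w'}\circ\varphi$ fixes $\Omega$'' with a \emph{single} conjugator valid on the whole set $\Omega$ — in other words, controlling the interaction between the conjugacy $i_w$ and the two distinct points of $\Omega$, which may sit at different shift-positions relative to the interval point $x$. I would handle this by working on the interval side: each point $(U,V)\in\Omega=\Sd(x)$ corresponds to a pair of sequences of partial isometries whose composition, read along the period of length $n$, realizes $\delta_{V_{n-1}}\circ\dots\circ\delta_{V_0}$ (resp.\ on the $U$-side) as an affine contraction fixing $x$; the recoding of Proposition~\ref{prop:recoding} identifies this with $\partial^2\varphi$ up to the explicit word given by the Dehn twist composition. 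Tracking that explicit word through one period gives the conjugator $w'$, and since $x$ is common to both points of $\Omega$, the same $w'$ works for both. The case distinctions (type $0$ vs.\ type $1$, forward vs.\ backward singularity, the special role of the endpoints $0$ and $|\lambda'|$ noted after Proposition~\ref{prop:recoding}) are routine but must be checked; none of them should obstruct the conclusion, since in every case Proposition~\ref{prop:newsing} and the discussion following it already show the $\delta$-singularity structure is preserved step by step.
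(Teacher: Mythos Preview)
Your proposal circles around the right geometric picture but never lands on the one combinatorial fact that makes the exponent equal to $1$. The paper's proof is direct: write $R^n(\delta)=\delta'=(\pi',\lambda')$ with $\pi'=\pi$ and $\lambda'=\eta\lambda$, and use Proposition~\ref{prop:recoding} together with Proposition~\ref{prop:newsing} to show that if $x$ is the $i$-th (from the left) forward (resp.\ backward) $\delta'$-singularity, then there is a word $u$ with letters in $A_N$ (resp.\ $A_N^{-1}$) such that $\delta_{u_p}\circ\dots\circ\delta_{u_0}(x)$ is the $i$-th forward (resp.\ backward) $\delta$-singularity. The crucial point is \emph{same $i$}: because $\pi'=\pi$, the combinatorial label of each singularity is preserved by one full period of Rauzy inductions, so $\partial^2(i_w\circ\varphi)$ already sends $\Omega$ to itself as a set, and using $\pi=\pi'$ once more shows the two points of $\Omega$ are not swapped.

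Your argument, by contrast, sets up an ``induction on the exponent'' and then asserts that ``some forward iterate of this correspondence is a genuine self-map: there is a conjugacy $i_{w'}$ with $i_{w'}\circ\varphi$ fixing all the points of $\Omega$.'' This is a non-sequitur: knowing that $\varphi$ permutes the finite set of $\delta$-singularities (or their $S$-orbits) only tells you that some \emph{power} $i_{w'}\circ\varphi^m$ fixes $\Omega$, which is the hypothesis you started from, not the conclusion. You never use the equality $\pi'=\pi$ to pin down that the permutation is the identity already at $m=1$. Your final paragraph addresses a secondary issue (why one conjugator works for both points of $\Omega$) but leaves this primary gap open. The fix is short: drop the exponent-reduction scaffolding and argue directly that, since $\delta'$ has the same permutation data as $\delta$, the $i$-th singularity of $\delta'$ is carried by the recoding-plus-shift to the $i$-th singularity of $\delta$.
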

		\begin{proof}
		Define $\delta = (\pi, \lambda)$. Let $n$ be the smallest integer such that
		$R^n(\delta) = \delta' = (\pi', \lambda')$ with $\pi'=\pi$ and $\lambda' = \eta\lambda$
		for some positive real number $\eta$. Recall
		(proposition \ref{prop:recoding}) that for any point $x\in [0, |\lambda'|)$, we have
		$\Sd(x) = \partial^2\varphi(\Sdd(x))$. We deduce from $\pi=\pi'$ together with proposition \ref{prop:newsing}
		that if $x$ is the $i$th (from the left) forward (resp. backward) $\delta'$-singularity, then there exists a
		word $u = u_0u_1\dots u_p$ with letters in $A_N$ (resp. $A_N^{-1}$) such that
		$\delta_{u_p}\circ \dots\circ \delta_{u_0}(x)$ is the $i$th (from the left) forward (resp. backward) $\delta$-singularity.
		This last property tells us any $\varphi$-singularity $\Omega$ is globally invariant by $i_w\circ \varphi$
		for some $w$: we have $\partial^2(i_w\circ \varphi) (\Omega) = \Omega$. We conclude each point of $\Omega$
		is in fact fixed by using $\pi=\pi'$ again.
		\end{proof}
		Note that this property is quite specific to $\delta$-automorphisms. In the general case, we often need to consider
		powers of a given automorphism to fix its singularities.

		\subsubsection{Different positive primitive automorphisms may have the same attracting subshift}\label{subsubsec:diffaut}

		Following from \cite{BFH}, if $\varphi$ and $\psi$ are two $A_N$-positive primitive automorphism with
		$i_u\circ \psi^k = \varphi^h$ for some $u\in F_N$ and $k,h\ge 1$, then $\Sv = \Spsi$. It is unclear in the general case if
		other automorphisms sharing the attracting subshift may be found.

		Our algorithm will attempt to decompose a given automorphism $\varphi$ along paths of Rauzy classes
		in order to determine if it comes from a CIET. The issue is that such a decomposition may not exist for $\varphi$,
		but may exist for another automorphism $\psi$ with a similar attracting subshift.
		In proposition \ref{prop:powerofvarphi}, we study automorphisms which share their attracting subshift with the $\delta$-automorphism
		of a CIET $\delta$. This will provide us with a way to determine which automorphisms are good candidates for the decomposition.

		\begin{prop}\label{prop:powerofvarphi}
		Let $\psi$ is an $A_n$-positive automorphism such that $\Spsi=\Sv$ and let $k$
		be a positive integer such that any $\varphi$-singularity $\Omega$ is fixed by $i_w\circ \psi^k$ for some $w\in F_N$.
		Then there exists $u\in F_N$ such that $\partial^2 (i_u\circ\psi^k) (Z)=Z$ and we have
		$i_u\circ\psi^k = \varphi^h$ for some positive integer $h$.
		\end{prop}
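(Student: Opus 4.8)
The plan is to combine the hypothesis that $\psi^k$ fixes every $\varphi$-singularity (up to conjugacy) with theorem \ref{thm:othermainresult} and theorem \ref{thm:mainresult}, which tell us that $\Sd=\Sv$ and that $\Sv=\Sd$ is the $S$-orbit closure of the single point $Z\in \Sd(0)$. The key observation is that $Z$ itself sits at a very rigid location: it is the unique point of $\Sd(0)$, and by the Keane condition (available since $\delta$ is self-induced) $0$ is neither a forward nor a backward $\delta$-singularity, and no iterate of $0$ under $\delta$ ever hits a singularity of the opposite type. So $Z$ is not a point of any $\varphi$-singularity, but it is ``visible'' from the singularities: it is a limit of shifts $S^n(Z)$ that approach the singularity points. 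I would exploit this by first establishing that some $i_u\circ\psi^k$ fixes $Z$, then upgrading to $i_u\circ\psi^k=\varphi^h$.

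First I would show there is $u\in F_N$ with $\partial^2(i_u\circ\psi^k)(Z)=Z$. Since $\Spsi=\Sv$, the homeomorphism $\partial^2\psi$ maps $\Sv$ to itself, hence so does $\partial^2\psi^k$; and since $S$ and $\partial^2\psi$ do not commute but rather $\partial^2\psi\circ S = S^{\ell}\circ \partial^2\psi$ in an appropriate sense on words (positivity lets us track how $\psi$ lengthens prefixes), the image $\partial^2\psi^k(Z)$ is some $S^{m}(Z)$ for an integer $m$; equivalently, after conjugating $\psi^k$ by the word $u$ realizing that shift, $i_u\circ\psi^k$ fixes $Z$. The clean way to see $\partial^2\psi^k(Z)=S^m(Z)$: write $Z=(X,Y)$; by theorem \ref{thm:mainresult} combined with theorem \ref{thm:othermainresult}, $Y=\lim \varphi^n(Y_0)$ and $X=\lim\varphi^n(X_0)$, and the analogous statement for $\psi$ shows $\partial^2\psi$ sends the orbit $\{S^n Z\}$ into itself, so by continuity and density it sends $Z$ to a point of $\Sv$ with the same ``combinatorial type'' as some orbit point; minimality and the fact that $\Sd(x)$ has at most two points (proposition \ref{prop:ckeane}) then pin down that $\partial^2\psi^k(Z)$ lies on the $S$-orbit of $Z$, not merely its closure.

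Next I would leverage the singularity hypothesis to force $i_u\circ\psi^k$ and $\varphi$ to be comparable. By proposition \ref{prop:invsing}, $i_{w'}\circ\varphi$ fixes each $\varphi$-singularity $\Omega$; by hypothesis $i_w\circ\psi^k$ also fixes $\Omega$ (and we may arrange the same $u$ works after the previous step, since $Z$ is a common limit of orbit points approaching the singularity points). The standard rigidity principle for positive automorphisms — two positive automorphisms fixing the same sufficiently rich set of boundary points and the attracting fixed point $Z$ must be powers of a common automorphism, which is exactly the regime of \cite{BFH} cited in section \ref{subsubsec:diffaut} — then gives $i_u\circ\psi^k=\varphi^h$ for some $h\ge 1$. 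Concretely: both maps fix $Z=(X,Y)$ and act on $\Sv$ as contractions toward $Z$ in the geometric representation (see \cite{GJLL}), so comparing the attracting laminations / the images $\varphi^h(a)$ versus $(i_u\circ\psi^k)(a)$ letter by letter using positivity and the coincidence on singularities forces equality for a suitable $h$ (the exponent $h$ absorbs the possibly different expansion factors of $\varphi$ and $\psi^k$).

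The main obstacle is the second step — promoting ``fixes $Z$ and fixes all singularities'' to the algebraic identity $i_u\circ\psi^k=\varphi^h$. Fixing $Z$ alone is not enough (many automorphisms can share an attracting fixed point without being powers of one another), so the argument must genuinely use that the $\varphi$-singularities are fixed: these encode the finitely many ``branch points'' of the attracting lamination, and an automorphism of $\Sv$ that fixes $Z$ together with every branch point is rigid. Making this rigidity precise — rather than merely invoking \cite{BFH} — is where the real work lies, and I expect it to rest on the index bound $N-1$ of \cite{GJLL} (finiteness of the singularity data) together with the observation that the orbit of $Z$ together with the singularity points is a generating/determining set for $\partial^2\psi$ on $\Sv$.
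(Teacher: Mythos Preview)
Your proposal has genuine gaps in both steps, and in each case the paper takes a rather different route.

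For the first step, your argument that $\partial^2\psi^k(Z)$ lies on the $S$-orbit of $Z$ (not merely its closure) is not justified. The relation you allude to, ``$\partial^2\psi\circ S = S^\ell\circ\partial^2\psi$ in an appropriate sense'', only says that $\partial^2\psi$ sends an $S$-orbit to an $S$-orbit; it does not say that the particular $S$-orbit of $Z$ is mapped into itself. Minimality gives density, not orbit-preservation. The paper avoids this entirely with a one-line observation you missed: the point $Y_0^{-1}Z=S(Z)$ is itself a point of a backward $\varphi$-singularity (because $\delta_{Y_0}(0)$ is a backward $\delta$-singularity, see the proof of theorem \ref{thm:mainresult}). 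By hypothesis this singularity is fixed by some $i_v\circ\psi^k$, so $\partial^2(i_v\circ\psi^k)(S(Z))=S(Z)$, and one conjugates back to obtain $u$ with $\partial^2(i_u\circ\psi^k)(Z)=Z$.

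For the second step you correctly identify the difficulty, but the ``rigidity principle'' you invoke is not available. The reference \cite{BFH} cited in section \ref{subsubsec:diffaut} goes in the opposite direction (a common power up to conjugacy implies equal attracting subshifts), and nothing in \cite{GJLL} or the index bound gives you that two positive automorphisms fixing $Z$ and all singularities must be powers of each other. The paper's argument is of a completely different nature: it passes to the translation surface $S_\delta$ associated to $\delta$ and invokes Veech's theorem (theorem \ref{thm:pA}) that every pseudo-Anosov homeomorphism of $S_\delta$ fixing a separatrix is obtained by Rauzy--Veech inductions. One first arranges (via the singularity hypothesis, with a short combinatorial argument on first and last letters) that a positive conjugate of $\psi^k$ fixes a $\varphi$-singularity and hence a separatrix; Veech's theorem then yields $(i_u\circ\psi^k)^i=\varphi^j$ for some $i,j\ge 1$, and proposition \ref{prop:invsing} together with the definition of the $\delta$-automorphism upgrades this to $i_u\circ\psi^k=\varphi^h$. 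The crux is thus geometric (pseudo-Anosov rigidity on the suspension surface), not the purely combinatorial rigidity you were hoping for.
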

		\begin{proof}
		First, note that the set of $\psi$-singularities is the set of $\varphi$-singularities: this is obvious since
		$\Spsi = \Sv$. Define $Z = (X, Y)$ and observe there exists a backward $\varphi$-singularity containing the point
		$(U, V) = Y_0^{-1}Z$ (where $Y_0$ is the first letter of $Y$). We know there exists $v\in F_N$ such that
		$\partial^2 (i_v\circ \psi^k) (U, V) = (U, V)$ and we deduce $\partial^2 (i_u\circ \psi^k) (Z) = Z$
		for $u = \psi^k(Y_0)vY_0^{-1}$.

		In order to prove that $i_u\circ\psi^k = \varphi^h$ for some $h>0$, we are going to use results on
		pseudo-Anosov homeomorphisms on translation surfaces.
		We refer to \cite{Vee} for the original construction of pseudo-Anosov homeomorphisms using interval
		exchange transformations and to \cite{BL} for a nice summary of the necessary definitions and results.
		General results on homeomorphisms on surfaces may also be found in \cite{FLP}.

		A homeomorphism $f$ on a surface $\mathcal{S}$ (we assume it has a boundary component) is pseudo-Anosov if there exist two transversely measured
		foliations $(\mathcal{F}_s, \mu_s)$ and $(\mathcal{F}_u, \mu_u)$, respectively called the stable
		and unstable foliations, such that $f(\mathcal{F}_s) = \eta^{-1}\mathcal{F}_s$ and
		$f(\mathcal{F}_u) = \eta\mathcal{F}_u$ for some real number $\eta>1$. The action of the map $f$ on the fundamental group of $S$
		can be seen as a free group automorphism whose attracting subshift is a combinatorial interpretation
		of the stable foliation.

		We associate (as in \cite{Vee} or \cite{BL}) a translation surface $S_\delta$ to our self-induced CIET $\delta$.
		In \cite{Vee}, Veech gives an interpretation on $S_\delta$ of the Rauzy induction; the transformation is
		referred to as the Rauzy-Veech induction.
		The following result is attributed to Veech in \cite{BL}. It should be understood up to composition by a conjugacy.
		\begin{thm}[\cite{Vee}]\label{thm:pA}
			All pseudo-Anosov homeomorphisms of $S_\delta$ that fix a separatrix are obtained by Rauzy-Veech inductions.
		\end{thm}
		A separatrix is a half leaf of the stable foliation that is attached to a singularity of $S_\delta$.
		A point $x\in S_\delta$ belonging to a leaf $L$ is a singularity if the combinatorial interpretation of $L$
		with marked point $x$ is a point $(U', V')$ of a $\varphi$-singularity; the left end point and the right end point
		of the interval are also considered singularities. Hence, a separatrix 
		is interpreted combinatorially as a point $X'\in \partial F_N$ such that there exists $Y'\in \partial F_N$
		such that $(X', Y')$ (resp. $(Y', X')$) belongs to a $\varphi$-singularity or is the orbit of the
		left end point or right end point of the interval; such a point is called a combinatorial separatrix.

		It is possible that the homeomorphism $\partial \psi^k$ (induced on $\partial F_N$ by $\psi^k$) does not fix a combinatorial separatrix.
		In that case, we prove that we can work with an $A_N$-positive automorphism $\psi' = i_v\circ \psi^k$ that fixes
		a $\varphi$-singularity ($\partial \psi'$ will then fix combinatorial separatrix).

		Recall we assumed any $\varphi$-singularity is fixed by $i_w\circ \psi^k$ for some $w$.
		Suppose $\partial \psi^k$ does not fix a combinatorial separatrix. Then $\psi^k$ does not fix a $\varphi$-singularity.
		We also deduce that there must exist a letter $a_0\in A_N$ such that $a_0$ is the first letter of $\psi^k(a_0)$ (otherwise
		there would be a $\varphi$-singularity that is fixed by some power of $\psi^k$ but not by $\psi^k$). Moreover, 
		if $i_w\circ \psi^k$ fixes the $\varphi$-singularity $\Omega = \{(X, Y), (X, Y')\}$, then the first letter of $w$
		is also the first letter of both $\psi^k(Y_0)$ and $\psi^k(Y_0')$. Applying this to all $\varphi$-singularities,
		we deduce $a_0$ is the first letter of $\psi^k(a)$ for any $a\in A_N$. Similarly, one can prove there exist
		a letter $b_0$ which is the last letter of $\psi^k(a)$ for any $a\in A_N$. We can then obtain an $A_N$-positive
		automorphism $\psi' = i_v\circ \psi^k$ that fixes a $\varphi$-singularity.

		We simply assume $\psi^k$ fixes a singularity. Both $\varphi$ and $\psi^k$ are seen as the action of pseudo-Anosov
		homeomorphisms fixing separatrix on the fundamental group of $S_\delta$, and we deduce from theorem \ref{thm:pA} that
		there exist two positive integers $i$ and $j$ such that $(i_u\circ\psi^k)^i = \varphi^j$.
		We conclude with the definition of the $\delta$-automorphism and proposition \ref{prop:invsing}.
		\end{proof}

		Effectively, this proposition tells us the decomposition should be attempted on automorphisms which fix all singularities, and which also
		fix a certain point $Z$ that will correspond to the left most point of the interval.

		\subsubsection{Rauzy inductions happen on the right}\label{subsubsec:mirror}

		Our definition of self-induction is somewhat non canonical because the Rauzy induction always happens on the right.
		As we mentioned, the algorithm will attempt a combinatorial version of the Rauzy induction using only
		the attracting subshift of a given automorphism. As the combinatorial nature of the process does
		not provide a clear definition of left and right, the very first induction will require an arbitrary
		choice for the right side. The aim of the present section is to give us an easy way to make this choice.

		Consider a self-induced CIET $\delta = (\pi, \lambda)$. Define the CIET $\delta' = (\pi', \lambda)$ such that,
		for any $a\in A_N$, we have $\pi_0'(a) = N-1-\pi_0(a)$ and $\pi_1'(a) = N-1-\pi_1(a)$.
		We will say that $\delta'$ is the \textbf{mirror} of $\delta$.
		The CIETs $\delta$ and $\delta'$ are essentially the same (if $\delta$ is the CIET on the interval $[0, |\lambda|]$,
		then $\delta'$ can be seen as the same CIET on $[|\lambda|, 0]$)
		and we have $\Sdd = \Sd$. However, $\delta'$ may not be auto-induced in our sense.

		For example, let $\eta$ be the greatest root of $\eta^2-4\eta+1$. Define
		$\lambda_a=2\eta-1$, $\lambda_b=\eta$ and $\lambda_c=2\eta$. Also define
		the permutations
		\begin{center}
			$\pi = \begin{pmatrix}a&b&c\\c&a&b\end{pmatrix}$ \hspace{1cm} and \hspace{1cm} $\pi' = \begin{pmatrix}c&b&a\\b&a&c\end{pmatrix}$
		\end{center}
		and the CIETs $\delta = (\pi, \lambda)$ and $\delta' = (\pi', \lambda)$.
		We obtain $R^5(\delta) = (\pi, \eta^{-1}\lambda)$, so $\delta$ is effectively self-induced.
		However, for any positive integer $n$, there will never be a real number $\eta'$ such that
		$R^n(\delta')$ is the CIET $(\pi', \eta'\lambda)$: a simple iteration shows that
		we obtain $R^5(\delta') = (\pi'', \lambda'')$ with
		\begin{center}
			$\pi'' = \begin{pmatrix}c&a&b\\b&a&c\end{pmatrix}$ \hspace{1cm} and \hspace{1cm} $\lambda'' = \begin{pmatrix}1\\\eta-2\\1\end{pmatrix}$
		\end{center}
		and $R^5(\pi'', \lambda'') = (\pi'', \eta^{-1}\lambda'')$.

		It is reasonable to think this is the general behavior of self-induced CIETs. Namely, if $\delta$ is a self-induced CIET and we define
		$\delta'$ as above, then the path of the Rauzy graph associated to $\delta'$ is eventually periodic.

		\begin{prop}\label{prop:mirror}
		Let $\delta=(\pi, \lambda)$ be a self-induced CIET and let $\delta'=(\pi', \lambda)$
		be its mirror. Define $\varphi$ as the $\delta$-automorphism, $\Sd(0) = \{Z\}$ and $\Sd(|\lambda|) = \Sdd(0) = \{Z'\}$.
		The CIET $\delta'$ is self-induced if and only if the automorphism $i_u\circ\varphi$ such that
		$\partial^2 (i_u\circ\varphi)(Z')=Z'$ is $A_N$-positive.
		\end{prop}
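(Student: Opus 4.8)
The plan is to prove the two implications by passing through the candidate $\delta'$-automorphism and invoking Proposition \ref{prop:powerofvarphi} and Theorem \ref{thm:pA}. Some preliminary remarks I would make once and for all: since $\Sdd=\Sd$, Theorem \ref{thm:othermainresult} gives $\Sigma_{\delta'}=\Sd=\Sv$; $\Sv$ is minimal because $\varphi$ is primitive; and $Z'$ plays for $\delta'$ the exact role $Z$ plays for $\delta$, namely it is the unique coding $\Sigma_{\delta'}(0)$ of the left end of the interval. Finally, the automorphism $i_u\circ\varphi$ with $\partial^2(i_u\circ\varphi)(Z')=Z'$ is unique when it exists: two automorphisms $i_u\circ\varphi$ and $i_{u'}\circ\varphi$ fixing $Z'$ differ by a conjugacy $i_v$ fixing $Z'$, and the (aperiodic) ray structure of $Z'$ forces $v=\epsilon$.

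For the direct implication, assume $\delta'$ is self-induced and let $\varphi'$ be its $\delta'$-automorphism. Then $\varphi'$ is $A_N$-positive, $\Sigma_{\varphi'}=\Sigma_{\delta'}=\Sv$, and running the argument of the proof of Theorem \ref{thm:othermainresult} for $\delta'$ gives $\partial^2\varphi'(Z')=Z'$ since $Z'$ codes the left end of $\delta'$. I would then apply Proposition \ref{prop:powerofvarphi} \emph{with $\varphi'$ in the role of the $\delta$-automorphism}, with $\psi=\varphi$ and $k=1$: the hypotheses hold because $\varphi$ is $A_N$-positive with $\Sigma_\varphi=\Sigma_{\varphi'}$, and by Proposition \ref{prop:invsing} every $\varphi'$-singularity (which is a $\varphi$-singularity, the subshifts being equal) is fixed by some $i_w\circ\varphi$. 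The conclusion yields $u\in F_N$ with $\partial^2(i_u\circ\varphi)(Z')=Z'$ and $i_u\circ\varphi=\varphi'^{\,h}$ for some $h\ge 1$. By the uniqueness above this $i_u\circ\varphi$ is the automorphism in the statement, and as a positive power of the $A_N$-positive automorphism $\varphi'$ it is $A_N$-positive.

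For the converse, set $\psi:=i_u\circ\varphi$ and assume $\psi$ is $A_N$-positive with $\partial^2\psi(Z')=Z'$. Arguing as in the proof of Theorem \ref{thm:othermainresult}, since $Z'\in\Sv$, since $\Sv$ is minimal, and since $\psi$ is $A_N$-positive fixing $Z'$, one gets that $\psi$ is an $A_N$-positive primitive automorphism with $\Sigma_\psi=\Sv=\Sigma_{\delta'}$. Now $\delta'$ determines the same translation surface as $\delta$ (same stable foliation $\Sdd=\Sd$, same singularity data), and $\psi$, being $A_N$-positive and primitive, is the action on $\pi_1(S_{\delta'})=F_N$ of a pseudo-Anosov homeomorphism; as $\partial\psi$ fixes the rays of $Z'$, which form a combinatorial separatrix attached to the left end of the interval, this pseudo-Anosov fixes a separatrix. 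By Theorem \ref{thm:pA} it is obtained by Rauzy--Veech inductions, so the Rauzy--Veech path of the CIET with subshift $\Sigma_\psi$ based at the separatrix coded by $Z'$ is periodic. That CIET is $\delta'$ up to rescaling the length vector: its permutation is read off from $\Sigma_\psi=\Sigma_{\delta'}$ together with the left-end coding $Z'$ (as in Theorem \ref{thm:mainresult}), and its length vector is, by unique ergodicity, the letter-frequency vector of $\Sigma_\psi$, hence proportional to $\lambda$. Therefore $\delta'$ is self-induced.

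The main obstacle is the converse direction, and within it the bookkeeping that identifies the CIET produced by Theorem \ref{thm:pA} with $\delta'$ itself up to rescaling --- equivalently, the fact that the permutation and the projectivized length vector of a self-induced CIET are determined by its subshift and the coding of the left end of the interval. The direct direction, by contrast, reduces to checking the hypotheses of Proposition \ref{prop:powerofvarphi}.
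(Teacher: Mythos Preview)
Your forward implication is essentially the paper's: both invoke Proposition~\ref{prop:powerofvarphi} (with $\varphi'$ playing the role of the $\delta$-automorphism and $\psi=\varphi$, $k=1$) to conclude $i_u\circ\varphi$ is a positive power of the $A_N$-positive $\delta'$-automorphism.

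For the converse you take a genuinely different route. The paper works concretely inside the interval: it defines, for each $a\in A_N$, the set $I_a\subset[0,|\lambda|]$ of points whose $\delta'$-coding lies in $\partial^2\psi(\{V_0=a\})$, checks (using Proposition~\ref{prop:invsing}) that these intervals assemble into a CIET $\delta''$ with the \emph{same} permutation $\pi'$ and a proportional length vector, and then shows $\delta''=R^n(\delta')$ by appealing to the first-return characterisation of Rauzy induction (\cite[proposition~6.2]{CanSie}, \cite[proposition~8.9]{Vee}, \cite[theorem~23]{Rau}). You instead lift everything to the translation surface, realise $\psi$ as a pseudo-Anosov fixing the separatrix coded by $Z'$, invoke Theorem~\ref{thm:pA} to get a periodic Rauzy--Veech orbit, and finally identify the resulting CIET with $\delta'$ via the subshift, the left-end coding, and unique ergodicity.

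Your strategy is sound, but the step you yourself flag as the main obstacle --- identifying the CIET produced by Theorem~\ref{thm:pA} with $\delta'$ up to rescaling --- is precisely where the paper's argument is more explicit. Theorem~\ref{thm:pA} as stated guarantees that the pseudo-Anosov arises from \emph{some} periodic Rauzy--Veech path, not that the path starts at $\pi'$; your claim that ``the permutation is read off from $\Sigma_\psi$ together with the left-end coding $Z'$'' and that the length vector is pinned down by unique ergodicity is correct but needs the same bookkeeping the paper carries out directly (in particular, you should say why $\Sigma_\psi$ is uniquely ergodic --- it follows from self-induction of $\delta$, hence linear recurrence). The paper's approach trades the surface machinery for an explicit interval construction and three citable lemmas; yours is shorter once Theorem~\ref{thm:pA} is available, but leans harder on that black box and on the identification you left implicit.
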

		\begin{proof}
		Note that the existence of a word $u\in F_N$ such that $\partial^2 (i_u\circ\varphi)(Z')=Z'$
		is a consequence of proposition \ref{prop:invsing}. If $\delta'$ is self-induced,
		we deduce $i_u\circ\varphi$ is $A_N$-positive from the definitions of the $\delta$-automorphism
		and $\delta'$-automorphism along with proposition \ref{prop:powerofvarphi}.

		Assume $\psi = i_u\circ\varphi$ is $A_N$-positive and consider the CIET $\delta'$ on the interval $[0, |\lambda|]$.
		For any $a\in A_N$, define $I_a\subset [0, |\lambda|]$ (resp. $I_{a^{-1}}\subset [0, |\lambda|]$) as the set of points $x$ such that
		there exists $(U, V)\in \Sdd$ with $V_0=a$ (resp. $U_0=a^{-1}$) such that $\partial^2\psi(U, V)\in \Sdd(x)$.
		Define the system of partial isometries $\delta'' = (\delta_a'')_{a\in A_N}$ where for any $a\in A_N$, the map $\delta_a'':I_a\to I_{a^{-1}}$
		is a translation. We want to show that $\delta'' = R^n(\delta')$ for some positive integer $n$.
		First, we prove $\delta''$ is effectively a CIET. For any $a\in A_N$, the set $I_a$ (resp. $I_{a^{-1}}$) is a closed interval;
		if it were not, the domain of $\delta_a'$ (resp. $\delta_{a^{-1}}'$) would not be a closed interval either.
		Define $Z'=(X', Y')$. Since $\partial^2\psi (Z')=Z'$, both $I_{Y_0}$ and $I_{X_0}$ contain $0$. We deduce from
		proposition \ref{prop:invsing} that for any $a,b\in A_N$, $I_a\cap I_b$ (resp. $I_{a^{-1}}\cap I_{b^{-1}}$) contains (exactly) one point
		if and only if $|\pi_0(a)-\pi_0(b)|=1$ (resp. $|\pi_1(a)-\pi_1(b)|=1$) and is empty otherwise. Hence, $\delta''$
		is a CIET and we have $\delta'' = (\pi'', \lambda'')$ with $\pi''=\pi'$. Moreover, we have
		$\lambda''=\eta\lambda'$ for some positive real number $\eta$; if this were not the case, we would easily deduce $\Sigma_{\delta''}(0)\ne \{Z'\}$
		from minimality. Also, as a consequence of \cite[proposition 6.2]{CanSie}, $\delta''$ is a first return system (with respect to remark \ref{rmk:firstreturn}).
		We conclude by using \cite[proposition 8.9]{Vee} and \cite[theorem 23]{Rau} which state a first return system on $[0, |\lambda''|]$ has (exactly) $N-1$
		singularities if and only if it can be obtained by Rauzy inductions.
		\end{proof}

\section{The algorithm}

Theorem \ref{thm:othermainresult} is especially useful because it allows us to translate obvious geometric
properties into combinatorial properties. The important point here is, given a CIET $\delta$ satisfying the
Keane condition, the attracting subshift of the $\delta$-automorphism must contain pairs of points
representing the coding of the orbits of the $\delta$-singularities. This is the main idea
of the algorithm below.

Starting with a self-induced CIET $\delta$, we have constructed in the previous section the $\delta$-automorphism
using Rauzy inductions. The algorithm will attempt to execute this process in reverse by decomposing
a positive primitive automorphism into Dehn twists, using a combinatorial interpretation of the Rauzy induction.

If an automorphism $\varphi$ is a $\delta$-automorphism for some CIET $\delta$, then the $\varphi$-singularities
will be exactly the $\delta$-singularities, and in this case, we can perform (combinatorial) Rauzy inductions
using only the singularities. In the general case however, the set of singularities of a positive primitive
automorphism can be quite different from the set of singularities of a CIET, and
Rauzy inductions may not be possible. In section \ref{subsec:nececond}, we give a list
of conditions on the attracting subshift (and in particular the singularities) of a positive primitive
automorphism that are necessary for Rauzy inductions to be possible. The final algorithm is detailed
in section \ref{subsec:finalalg}; it consists in checking the necessary conditions of section \ref{subsec:nececond}
and applying combinatorial Rauzy inductions until the induction fails (in which case we conclude the automorphism
does not come from a CIET) or the automorphism is successfuly decomposed (in which case we can easily deduce the
underlying CIET).

	\subsection{The necessary conditions}\label{subsec:nececond}

	Let $\delta=(\pi, \lambda)$ be a self-induced CIET and let $\varphi$ be its $\delta$-automorphism. We explicit
	a list of conditions that are satisfied by both $\varphi$ and $\Sv$. As the algorithm
	attempts to decompose an automorphism into Dehn twists, we will use these conditions to determine
	if a Rauzy induction is possible; failing one of them will immediately stop the algorithm.
	Some of these conditions are deliberatly overly detailed to prepare for the algorithm.
	\begin{enumerate}[(C 1)]
		\item Following from proposition \ref{prop:ckeane}, there is exactly $2N-2$ distincts
		$\varphi$-singularities $\{\Omega_0, \dots, \Omega_{2N-3}\}$, each containing exactly $2$ points.
		\item From corollary \ref{cor:ckeane}, we can order the
		$\varphi$-singularities so that for any $0\le i\le N-2$,
		\begin{center}
			$\Omega_i = \{(U_{(i)}, V_{(i)}), (U_{(i)}', V_{(i)}')\}$ with $U_{(i)} = U_{(i)}'$
		\end{center}
		and for any $N-1\le j\le 2N-3$,
		\begin{center}
			$\Omega_j = \{(U_{(j)}, V_{(j)}), (U_{(j)}', V_{(j)}')\}$ with $V_{(j)} = V_{(j)}'$.
		\end{center}
		\item Define the \textbf{forward graph} $G_+$ (resp. \textbf{backward graph} $G_-$) as the graph whose nodes are the elements
		of $A_N$ and there is an (unoriented) edge from $a$ to $b$ if there exists $0\le i\le N-2$ (resp. $N-1\le j\le 2N-3$)
		such that $a$ and $b$ (resp. $a^{-1}$ and $b^{-1}$) are the first letters of $V_{(i)}$ and $V_{(i)}'$ (resp. $U_{(j)}$ and $U_{(j)}'$);
		such an edge is labeled by $a_0$ if $a_0^{-1}$ (resp. $a_0$) is the first letter of $U_{(i)}$ (resp. $V_{(j)}$)
		(see the example in appendix \ref{appendix:example}). Define the \textbf{distance} between
		two nodes (resp. a node and an edge) as the number of edges contained in the path joining
		one to the other (note: conventionnally, the path joining a node $a$ to an edge $e$ does not contain $e$).
		We get the following conditions from theorem \ref{thm:othermainresult}.
		\begin{enumerate}[(C 3.1)]
			\item Both graphs are connected and each one contains two nodes of degree
			(the number of adjacent edges) $1$ while all the others have degree $2$.
			\item Observe that $\pi_0^{-1}(0) = \alpha$ (resp. $\pi_1^{-1}(0) = \beta$) is a node of $G_+$ (resp. $G_-$)
			with degree $1$. Moreover, for any node $a$ of $G_+$ (resp. $G_-$), $\pi_0(a)$ (resp. $\pi_1(a)$)
			is given by the distance between $a$ and $\alpha$ (resp. $a$ and $\beta$).
			\item Let $e_a$ and $e_b$ be two edges of $G_+$ (resp. $G_-$) labeled $a$ and $b$ respectively and suppose $a\ne b$.
			Then $\pi_1(a) < \pi_1(b)$ (resp. $\pi_0(a) < \pi_0(b)$) if and only if $e_a$ is closer to $\alpha$ (resp. $\beta$)
			than $e_b$.

			It may happen that all the edges of $G_+$ have a common label $\beta_0$ and all the edges of $G_-$ have
			a common label $\beta_1$. In that case, we prove the following proposition.
			\begin{prop}\label{prop:determinepi}
			We have $\beta_0 = \beta_1$ and $\{\pi_0(\beta_0), \pi_1(\beta_0)\} = \{0, N-1\}$.
			\end{prop}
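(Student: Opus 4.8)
The plan is to exploit the structure of the graphs $G_+$ and $G_-$ established in conditions (C 3.1)--(C 3.3) together with the matching of singularities coming from theorem \ref{thm:othermainresult}. The hypothesis is that every edge of $G_+$ carries the same label $\beta_0$ and every edge of $G_-$ the same label $\beta_1$. Recall an edge of $G_+$ joining the first letters $a,b$ of $V_{(i)},V_{(i)}'$ is labelled $\beta_0$ precisely when $\beta_0^{-1}$ is the first letter of the common word $U_{(i)}=U_{(i)}'$; thus the hypothesis says that for \emph{all} $i$ with $0\le i\le N-2$ the common word $U_{(i)}$ starts with $\beta_0^{-1}$, and symmetrically for all $j$ with $N-1\le j\le 2N-3$ the common word $V_{(j)}$ starts with $\beta_1$. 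In CIET terms (via $\Sd=\Sv$), every forward $\delta$-singularity has, as the backward part of its coding, a point whose first letter is $\beta_0^{-1}$, i.e. it is $\delta_{\beta_0}$-image of a single point; and every backward $\delta$-singularity is in the forward $\delta_{\beta_1}$-orbit.

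First I would translate ``all forward $\delta$-singularities are images under $\delta_{\beta_0}$ of a common interior arc'' into a statement about the pre-images: the points $\Lambda_0(a)$, $a\neq\pi_0^{-1}(0)$, which are exactly the left endpoints of the domains $D(\delta_a)$, all have their $\delta$-pre-image lying inside the single domain $D(\delta_{\beta_0})$. Since the domains $D(\delta_a)$ partition $[0,|\lambda|]$ (up to endpoints) and the $\delta$-pre-image map sends $[0,|\lambda|]$ bijectively onto itself by translating each $D(\delta_a)$ to $[\Lambda_1(a),\Lambda_1(a)+\lambda_a]$, the set of forward $\delta$-singularities lying in the \emph{image} $\delta_{\beta_0}(D(\delta_{\beta_0}))=[\Lambda_1(\beta_0),\Lambda_1(\beta_0)+\lambda_{\beta_0}]$ must contain all of $\Lambda_0(a)$ for $a\neq\pi_0^{-1}(0)$ except possibly $\Lambda_0(\beta_0)$ itself (which could be an endpoint of that image). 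There are $N-1$ forward $\delta$-singularities and $N$ subintervals $D(\delta_a)$; the only way a single one of them, $D(\delta_{\beta_0})$, can contain in its interior the pre-images of all (or all but one) of these $N-1$ points is if $D(\delta_{\beta_0})$ is, in the ``bottom'' ($\pi_1$) ordering, the last interval — i.e. $\pi_1(\beta_0)=N-1$ — or by the symmetric mirror argument the first, $\pi_1(\beta_0)=0$. Running the identical argument on $G_-$ gives $\pi_0(\beta_1)\in\{0,N-1\}$.

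Next I would pin down $\beta_0=\beta_1$ and the pairing of the two indices. Condition (C 3.1) says $G_+$ has exactly two degree-one nodes; one of them is $\alpha=\pi_0^{-1}(0)$ by (C 3.2). Since $G_+$ is connected with all other nodes of degree $2$ it is a simple path $\alpha = c_0 - c_1 - \cdots - c_{N-1}$, and $\pi_0(c_m)=m$. If every edge is labelled $\beta_0$, then in particular the edge incident to $\alpha$ is labelled $\beta_0$, and the edge incident to the other degree-one node $c_{N-1}$ is labelled $\beta_0$; but $\beta_0$ as a \emph{node} of $G_+$ must itself appear, and an edge labelled $\beta_0$ records $\beta_0^{-1}$ as first letter of the backward coordinate $U_{(i)}$, so $\beta_0$ cannot be the first letter of any such backward word — one checks this forces $\beta_0$ to be one of the two endpoints of the path, hence $\pi_0(\beta_0)\in\{0,N-1\}$. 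Combined with $\pi_1(\beta_0)\in\{0,N-1\}$ from the previous paragraph and irreducibility of $\pi$ (which forbids $\pi_0(\beta_0)=\pi_1(\beta_0)=0$ and forbids $=N-1$ for both, since that would make $\{\beta_0\}$ or its complement $\pi_1\pi_0^{-1}$-invariant), we get $\{\pi_0(\beta_0),\pi_1(\beta_0)\}=\{0,N-1\}$. Finally, the same analysis applied to $G_-$ produces a letter playing this role there, and since the letter with $\pi_0$-value $N-1$ and $\pi_1$-value $0$ (or vice versa) is unique, it must coincide with $\beta_1$, giving $\beta_0=\beta_1$.

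The main obstacle I anticipate is the bookkeeping in the second paragraph: making precise, without circularity, the counting argument that ``one domain absorbs the pre-images of all $N-1$ forward singularities forces that domain to be extremal in the $\pi_1$-order.'' The cleanest route is probably to avoid the pre-image picture entirely and argue directly on the path structure of $G_+$: the labels on the $N-1$ edges of the path $\alpha=c_0-\cdots-c_{N-1}$ are, by (C 3.3), a linear refinement of $\pi_1$ restricted to the label set, so if they are all equal to a single letter $\beta_0$ the label set is a singleton, meaning only one letter other than $\pi_1^{-1}(0)$ ever occurs as a first letter $U_{(i)}{}^{-1}$; chasing what this says about the bi-infinite words in $\Sv$ via theorem \ref{thm:othermainresult} then forces $\beta_0$ into an extremal position in both permutations. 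I would present whichever of the two formulations (pre-image count vs. path-label count) turns out to need fewer auxiliary lemmas.
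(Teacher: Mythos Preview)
Your second-paragraph argument has a real gap: you try to deduce $\pi_1(\beta_0)\in\{0,N-1\}$ from the hypothesis on $G_+$ alone, but that hypothesis does not force the image interval $[\Lambda_1(\beta_0),\Lambda_1(\beta_0)+\lambda_{\beta_0}]$ to be extremal. Take $N=3$, $\pi_0(a,b,c)=(0,1,2)$, $\pi_1(c,a,b)=(0,1,2)$ and any lengths with $\lambda_b<\lambda_c<\lambda_a$: both forward singularities $\lambda_a$ and $\lambda_a+\lambda_b$ lie in the image of $\delta_a$, so every edge of $G_+$ is labelled $a$, yet $\pi_1(a)=1$. What actually excludes this is the \emph{other} hypothesis, on $G_-$, and this is precisely what the paper uses: if $\pi_1(\beta_0)\notin\{0,N-1\}$ then both endpoints of the image of $\delta_{\beta_0}$ are backward $\delta$-singularities; since all $N-1$ forward singularities lie between them, the two endpoints sit in distinct domains $D(\delta_a)$, $D(\delta_b)$ with $a\ne b$, hence they contribute edges of $G_-$ with distinct labels $a$ and $b$, contradicting the $G_-$ hypothesis. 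The two hypotheses must be played against each other.

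Your third paragraph also breaks down in two places. First, the step ``$\beta_0^{-1}$ is the first letter of every $U_{(i)}$, hence $\beta_0$ is an endpoint of the $G_+$ path'' is a non sequitur: edge labels of $G_+$ record the letter $U_0^{-1}$, whereas the position of $\beta_0$ as a \emph{node} depends on whether $\beta_0$ occurs as some $V_0$ or $V_0'$; these are unrelated, and nothing in your argument prevents $\beta_0$ from being an interior node. Second, even if you had $\{\pi_0(\beta_0),\pi_1(\beta_0)\}=\{0,N-1\}$ and likewise for $\beta_1$, your uniqueness claim fails: there can perfectly well be two distinct letters, one with $(\pi_0,\pi_1)=(0,N-1)$ and another with $(N-1,0)$. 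The paper closes this with a length argument instead: the image of $\delta_{\beta_0}$ containing all forward singularities yields $\lambda_{\beta_0}>\sum_{a\ne\beta_0}\lambda_a$, and symmetrically for $\beta_1$; two distinct letters cannot each carry more than half the total length, so $\beta_0=\beta_1$. The final inequality $\pi_0(\beta_0)\ne\pi_1(\beta_0)$ then follows from the Keane condition (your irreducibility argument would also work here).
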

			\begin{proof}
			Since all the forward $\delta$-singularities
			are contained in the domain $[y_{\beta_0}, y_{\beta_0}']$ of $\delta_{\beta_0^{-1}}$, there exist $a, b\in A_N$ with
			$a\ne b$ such that $y_{\beta_0}$ (resp. $y_{\beta_0}'$) is in the domain of
			$\delta_{a}$ (resp. $\delta_{b}$). If $\pi_1(\beta_0)$ is neither $0$ nor $N-1$,
			then both $y_{\beta_0}$ and $y_{\beta_0}'$ are backward $\delta$-singularities and we have a contradiction.
			Obviously, the same reasoning tells us $\pi_0(\beta_1)$ must also be either $0$ or $N-1$.
			We deduce $\lambda_{\beta_0} > \sum\limits_{a\ne \beta_0} \lambda_a$ and
			$\lambda_{\beta_1} > \sum\limits_{a\ne \beta_1} \lambda_a$ and conclude $\beta_0=\beta_1$.
			Finally, we observe the equality $\pi_0(\beta_0)=\pi_1(\beta_0)$ contradicts the Keane condition.
			\end{proof}
		\end{enumerate}
		\item Define $\alpha_0 = \pi_0^{-1}(N-1)$ and $\alpha_1 = \pi_1^{-1}(N-1)$.
		There is exactly one point $(U, V)$ of one $\varphi$-singularity $\Omega$
		such that $U_0^{-1} = \alpha_1$ and $V_0 = \alpha_0$. The $\varphi$-singularity $\Omega$
		contains a point $(U', V)$ with $U'\ne U$ if and only if $\delta$ has type $0$.
		\item If $\Sd(0) = \{Z\}$, then $\partial^2 \varphi(Z)=Z$.
	\end{enumerate}

	We conjecture that condition (C $1$) is a (necessary and) sufficient condition for the
	attracting subshift of an $A_N$-positive primitive automorphism to be the subshift of a CIET.
	\begin{con}
	If $\psi$ is an $A_N$-positive primitive automorphism satisfying condition (C $1$),
	then there exists a self-induced CIET $\delta$ such that $\Sd=\Spsi$.
	In particular, $\psi$ also satisfies (C $2$) and (C $3$) and if $\varphi$ is the
	$\delta$-automorphism, then $i_u\circ \psi^k=\varphi^h$ for some $h,k\ge 1$ and $u\in F_N$.
	\end{con}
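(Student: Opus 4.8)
Since the statement is posed as a \emph{conjecture}, what follows is a strategy rather than a complete argument, with an indication of where the genuine difficulty lies. The plan is to realize $\psi$ geometrically and then invoke Veech's theorem \ref{thm:pA}. First I would feed $\psi$ into the index machinery of \cite{Jul}, \cite{GJLL}, \cite{CH}: condition (C 1) says the full outer index of $\psi$ is carried by exactly $2N-2$ singularities each of cardinality $2$, so by \cite{CH} the geometric representation of $\Spsi$ is a system of partial isometries on a \emph{finite} union of \emph{finite} trees, say $\delta=\{\delta_a;\,a\in A_N\}$ acting on a forest $T$, with the $2N-2$ singularities corresponding to the forward/backward non-injectivity points of $\delta$.

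The crucial step is to upgrade ``finite union of finite trees'' to ``a single interval''. Here the cardinality-$2$ hypothesis should be decisive: a singularity with exactly two points corresponds to a $3$-pronged leaf of the dual lamination, i.e.\ to a point of $T$ of degree at most $2$ at which $\delta$ fails to be single-valued in the simplest possible way; conversely a branch point of $T$ (degree $\ge 3$), or a higher-order singularity of the foliation, would force some $\Omega_i$ to contain at least three points, contradicting (C 1). Ruling out branch points this way makes each component of $T$ an arc. Primitivity of $\psi$ gives minimality of $(\Spsi,S)$, which I would use to show there is a single component: on a disconnected forest the incidence data of a minimal invariant system would decompose, contradicting the Perron--Frobenius simplicity coming from primitivity. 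Thus $T=I$ is an interval and $\delta$ is a CIET $(\pi,\lambda)$, with $\lambda$ the (unique, by unique ergodicity) letter-frequency vector of $\Spsi$ and $\pi$ read off from the forward and backward graphs $G_+,G_-$ as in (C 3.2)--(C 3.3); that $G_\pm$ are paths with two degree-$1$ vertices is exactly the arc statement just proved, which also yields (C 2) and (C 3), using proposition \ref{prop:determinepi} in the degenerate label case.

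Next I would check that $\delta$ satisfies the Keane condition: a connection between two forward $\delta$-singularities would, by the analysis in proposition \ref{prop:ckeane} and the discussion around proposition \ref{prop:newsing}, either merge two of the $\Omega_i$ or create a point whose fibre under $\Sd(\cdot)$ has more than two points, again contradicting the exact count in (C 1). With Keane in hand, theorem \ref{thm:mainresult} gives $\overline{\{S^n(Z);\,n\in\mathds{Z}\}}=\Sd$ where $\Sd(0)=\{Z\}$, and by construction $\Sd=\Spsi$. For the self-induced claim and the relation $i_u\circ\psi^k=\varphi^h$, I would argue as in proposition \ref{prop:powerofvarphi}: form the suspension surface $S_\delta$; the positive primitive automorphism $\psi$ acts on $\pi_1(S_\delta)$ with invariant foliation dual to $\Spsi$ and with Perron--Frobenius dilatation, hence (after passing to a conjugate $i_v\circ\psi^k$ fixing a separatrix, exactly as in that proof) represents a pseudo-Anosov homeomorphism fixing a separatrix. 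Veech's theorem \ref{thm:pA} then expresses this map as a composition of Rauzy--Veech inductions, so the Rauzy path of $\delta$ is eventually periodic; using proposition \ref{prop:mirror} (or passing to the mirror if necessary) one gets an honest self-induced CIET whose $\delta$-automorphism $\varphi$ satisfies $(i_u\circ\psi^k)^i=\varphi^j$, and proposition \ref{prop:invsing} together with the definition of the $\delta$-automorphism lets one absorb the extra power to reach $i_u\circ\psi^k=\varphi^h$.

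The main obstacle is the second step: proving, purely from ``every singularity has exactly two points'', that the geometric realization has no branch points and a single arc component. This needs a precise dictionary between the cardinality of a $\varphi$-singularity and the local prong structure of the dual lamination (equivalently, the degree in the $\mathds{R}$-tree), valid for positive primitive (not necessarily fully irreducible) automorphisms, together with the connectedness argument from primitivity; this is presumably why the statement is left as a conjecture. A secondary difficulty is guaranteeing that the CIET produced is the one whose $\delta$-automorphism is commensurable with $\psi$, rather than merely some CIET with the same subshift — this is where pseudo-Anosov rigidity (uniqueness of the dilatation, and the fact that commuting pseudo-Anosov powers share a common root) is genuinely needed.
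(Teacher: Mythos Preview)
The paper does not prove this statement: it is explicitly labeled a \emph{conjecture} and no argument is given for it anywhere in the text. There is therefore no ``paper's own proof'' to compare your proposal against. You correctly recognize this and present a strategy rather than a proof, which is the appropriate response.

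Your outline is coherent and uses the right ingredients from the paper (the index theory of \cite{Jul,GJLL,CH}, proposition \ref{prop:ckeane}, theorem \ref{thm:pA}, propositions \ref{prop:powerofvarphi}, \ref{prop:mirror}, \ref{prop:invsing}). You also correctly isolate the heart of the difficulty: translating ``each singularity has exactly two points'' into ``the supporting $\mathds{R}$-tree is a single arc''. Two refinements are worth flagging. First, the classification in \cite{CH} that you invoke in the opening step is stated for \emph{fully irreducible} automorphisms, whereas the conjecture concerns $A_N$-positive \emph{primitive} automorphisms, a strictly weaker hypothesis (the paper itself notes this distinction in the introduction, citing \cite{ABHS}); so already the first step needs either an extension of \cite{CH} or an independent argument to land on a finite forest. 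Second, your connectedness argument (``primitivity plus Perron--Frobenius forces a single component'') is too quick: primitivity of the incidence matrix does not by itself preclude the geometric support from being a finite tree with a branch point rather than an arc, nor does it obviously rule out several arcs glued by the dynamics; this is really part of the same ``dictionary'' gap you identify, not a separate easy step.

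In short: your proposal is a reasonable sketch of how one might attack the conjecture, with the main obstruction correctly named, but it is not a proof and the paper does not supply one either.
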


	\subsection{The algorithm}\label{subsec:finalalg}

	The algorithm is based on the ability to identify the singularities of an $A_N$-positive primitive
	automorphism. The reader is referred to \cite{Jul} for a complete approach of the problem;
	the relevant results of \cite{Jul} are summarized in appendix \ref{appendix:thebase}.

	The algorithm does not pretend to be optimal. It may surely be improved in many ways, as there are many more necessary
	conditions than the ones we have listed. In fact, the aim here is to present a minimal list of such conditions. Another
	good point is that the approach we use is fully combinatorial.

	Let $\psi$ be an $A_N$-positive primitive automorphism. The following algorithm will
	determine in a finite time if its attracting subshift $\Spsi$ is equal to $\Sd$ for some
	CIET $\delta$. An example is detailed in appendix \ref{appendix:example}.
	\begin{enumerate}[(1)]
		\item List the $\psi$-singularities using the algorithm of \cite{Jul}. Stop if conditions
		(C $1$) and (C $2$) are not both satisfied.
		\item Define the forward graph $G_+$ and the backward graph $G_-$ as in condition (C $3$), and stop
		if condition (C $3.1$) is not satisfied. We now define a pair $\pi = (\pi_0, \pi_1)$
		with $\pi_0,\pi_1:A_N\to \{0, \dots, N-1\}$ that will agree with condition (C $3.2$).
		Choose a node $\alpha$ with degree $1$ in $G_+$. For any $a\in A_N$, define $\pi_0(a)$
		as the distance between $a$ and $\alpha$. If $G_+$ contains two edges with distinct labels,
		apply $(2.1)$. If all the edges of $G_+$ have the same label and $G_-$ contains two edges
		with distinct labels, apply $(2.2)$. Apply $(2.3)$ otherwise.
		\begin{enumerate}[(2.1)]
			\item Choose two edges $e_a$ and $e_b$ of $G_+$ labeled $a$ and $b$
			with $a\ne b$ and such that $e_a$ is closer to $\alpha$ than $e_b$.
			Choose the node $\beta$ of degree $1$ of $G_-$
			such that $a$ is closer to $\beta$ than $b$.
			For any $a_0\in A_N$, define $\pi_1(a_0)$ as the distance between the nodes
			$a_0$ and $\beta$.
			\item Choose two edges $e_a$ and $e_b$ of $G_-$ labeled $a$ and $b$
			with $a\ne b$ and such that $a$ is closer to $\alpha$ than $b$ in $G_+$.
			Choose the node $\beta$ of degree $1$ of $G_-$ such that $e_a$ is closer to $\beta$ than $e_b$.
			For any $a_0\in A_N$, define $\pi_1(a_0)$ as the distance between the nodes
			$a_0$ and $\beta$.
			\item Suppose all the edges of $G_+$ have the same label $\beta_0$ and all the edges
			of $G_-$ have the same label $\beta_1$. Stop if $\beta_0\ne \beta_1$.
			Also stop if $\alpha$ does not have degree $1$ in $G_-$.
			For any $a_0\in A_N$ define $\pi_1(a_0) = N-1-k$ where $k$
			is the distance between $a_0$ and $\alpha$ (in $G_-$).
		\end{enumerate}
		In any case, stop if conditions (C $3.2$), (C $3.3$) and (C $4$) are not all satisfied. We also
		check if $\pi$ is reducible and stop if it is.
		\item We now deal with condition (C $5$). Choose the smallest positive integer $k$ such that
		any $\psi$-singularity is fixed by $i_w\circ \psi^k$ for some $w\in F_N$.
		Let $\Omega = \{(U', V), (U, V)\}$ be the backward $\psi$-singularity
		such that $U_0 = (\pi_0^{-1}(0))^{-1}$ and $\pi_1(U_0'^{-1}) = \pi_1(U_0^{-1})-1$.
		Define $Z = U_0^{-1}(U, V)$ and $u\in F_N$ such that $\partial^2 (i_u\circ \psi^k)(Z)=Z$; also
		define $\varphi' = i_u\circ \psi^k$.

		Following from the discussion in section \ref{subsubsec:mirror}, we may also want to consider
		the pair $\pi'$ defined for any $a\in A_N$ by $\pi_0'(a) = N-1-\pi_0(a)$ and $\pi_1'(a) = N-1-\pi_1(a)$.
		Let $\Omega' = \{(X', Y), (X, Y)\}$ be the backward $\psi$-singularity
		such that $X_0 = (\pi_0'^{-1}(0))^{-1}$ and $\pi_1'(X_0'^{-1}) = \pi_1'(X_0^{-1})-1$.
		Define $Z' = X_0^{-1}(X, Y)$ and $v\in F_N$ such that $\partial^2 (i_v\circ \psi^k)(Z')=Z'$; also
		define $\varphi'' = i_v\circ \psi^k$.

		According to proposition \ref{prop:powerofvarphi}, we can stop if neither $\varphi'$ nor $\varphi''$ is $A_N$-positive.
		If $\varphi'$ (resp. $\varphi''$) is $A_N$-positive, we define $\varphi = \varphi'$ (resp. $\varphi = \varphi''$).
		Thanks to proposition \ref{prop:mirror}, we may define $\varphi$ to be either one of them if they are both $A_N$-positive.

		\item Set up a counter $i\leftarrow 0$. Define $\{\Omega_j^{(i)}, 0\le j\le 2N-3\}$ as the set of $\psi$-singularities,
		$(\pi_0^{(i)}, \pi_1^{(i)}) = (\pi_0, \pi_1)$ and $G_+^{(i)} = G_+$.
		\item Define $\alpha_0^{(i)} = (\pi_0^{(i)})^{-1}(N-1)$ and $\alpha_1^{(i)} = (\pi_1^{(i)})^{-1}(N-1)$ and let $e$ be the (only) edge
		of $G_+^{(i)}$ adjacent to $\alpha_0^{(i)}$.
		\begin{enumerate}[(5.1)]
			\item If $e$ is not labeled $\alpha_1^{(i)}$ define the automorphism $\sigma_i$ such that $\sigma_i(\alpha_1^{(i)}) = \alpha_1^{(i)}\alpha_0^{(i)}$
			and $\sigma_i(a)=a$ for any $a\ne \alpha_1^{(i)}$.
			\item If $e$ is labeled $\alpha_1^{(i)}$, define the automorphism
			$\sigma_i$ such that $\sigma_i(\alpha_0^{(i)})=\alpha_1^{(i)}\alpha_0^{(i)}$ and $\sigma_i(a)=a$ for any $a\ne \alpha_0^{(i)}$.
		\end{enumerate}
		\item
		\begin{itemize}
			\item If the automorphism $\sigma_i^{-1}\circ \dots\circ \sigma_0^{-1}\circ \varphi$ is the identity, the
			algorithm is a success. Define the incidence matrix $M$ of $\psi^k$ as the matrix $N\times N$ where
			$M_{a,b}$ is defined for any $a, b\in A_N$ as the number of occurences of the letter $a$ in $\psi^k(b)$.
			The matrix $M$ is primitive (one of its power only has positive entries) since $\psi^k$ is $A_N$-primitive.
			Perron-Frobenius theorem tells us the eigenvalue with greatest modulus is simple, has modulus (strictly)
			greater than the modulus of the other eigenvalues, is a positive real number and has a positive eigenvector
			$\lambda$. Define $\delta = (\pi, \lambda)$ where $\pi$ is as above.
			We have $\Spsi = \Sd$. The smallest period of the sequence $(\sigma_j)_{0\le j\le i}$ will give the $\delta$-automorphism.
			\item If the automorphism $\sigma_i^{-1}\circ \dots\circ \sigma_0^{-1}\circ \varphi$ is not the identity but is $A_N$-positive,
			we continue on to step $(7)$. Observe any letter of $A_N$ appears in $\sigma_{i-1}^{-1}\circ \dots\circ \sigma_0^{-1}\circ \varphi(a)$
			for at least one letter $a\in A_N$ (otherwise $\sigma_{i-1}^{-1}\circ \dots\circ \sigma_0^{-1}\circ \varphi$ would not be an automorphism)
			and deduce that the automorphism $\sigma_i^{-1}\circ \dots\circ \sigma_0^{-1}\circ \varphi$
			is (strictly) shorter (with respect to the lengths of the images of elements of $A_N$) than
			$\sigma_{i-1}^{-1}\circ \dots\circ \sigma_0^{-1}\circ \varphi$, ensuring the algorithm will effectively
			end after a finite, easily bounded, number of steps.
			\item According to proposition \ref{prop:powerofvarphi}, we can stop if the automorphism
			$\sigma_i^{-1}\circ \dots\circ \sigma_0^{-1}\circ \varphi$ is not $A_N$-positive.
		\end{itemize}
		\item We define a new set of singularities. We assume $\Omega_k^{(i)}$ contains the (only) point
		$(U, V)$ with $U_0^{-1} = \alpha_1^{(i)}$ and $V_0 = \alpha_0^{(i)}$. For any $1\le j\le 2N-3$, $j\ne k$,
		we simply define $\Omega_j^{(i+1)} = \partial^2\sigma_i^{-1} (\Omega_j^{(i)})$.
		Observe that if $\sigma_i$ is defined as in step $(5.1)$ (resp. $(5.2)$), then $\Omega_k^{(i)}$ contains
		a point $(U', V)$ with $U'\ne U$ (resp. $(U, V')$ with $V\ne V'$) and apply step $(7.1)$ (resp. $(7.2)$).
		\begin{enumerate}[(7.1)]
			\item Define $\Omega_k^{(i+1)} = \partial^2\sigma_i^{-1} (S (\Omega_k^{(i)}))$. Define $(\pi_0^{(i+1)}, \pi_1^{(i+1)})$
			as the result of a type $0$ induction on $(\pi_0^{(i)}, \pi_1^{(i)})$. Namely, $\pi_0^{(i+1)} = \pi_0^{(i)}$ and
			\begin{itemize}
				\item $\forall a\in A_N;~\pi_1^{(i)}(a)\le \pi_1^{(i)}(\alpha_0^{(i)}),~\pi_1^{(i+1)}(a) = \pi_1^{(i)}(a)$,
				\item $\pi_1^{(i+1)}(\alpha_1^{(i)}) = \pi_1^{(i)}(\alpha_0^{(i)})+1$,
				\item $\forall a\in A_N;~\pi_1^{(i)}(\alpha_0^{(i)}) < \pi_1^{(i)}(a) < \pi_1^{(i)}(\alpha_1^{(i)}),~\pi_1^{(i+1)}(a)=\pi_1^{(i)}(a)+1$.
			\end{itemize}
			\item Define $\Omega_k^{(i+1)} = \partial^2\sigma_i^{-1} (S^{-1} (\Omega_k^{(i)}))$. Define $(\pi_0^{(i+1)}, \pi_1^{(i+1)})$
			as the result of a type $1$ induction on $(\pi_0^{(i)}, \pi_1^{(i)})$. Namely, $\pi_1^{(i+1)} = \pi_1^{(i)}$ and
			\begin{itemize}
				\item $\forall a\in A_N;~\pi_0^{(i)}(a)\le \pi_0^{(i)}(\alpha_1^{(i)}),~\pi_0^{(i+1)}(a) = \pi_0^{(i)}(a)$,
				\item $\pi_0^{(i+1)}(\alpha_0^{(i)}) = \pi_0^{(i)}(\alpha_1^{(i)})+1$,
				\item $\forall a\in A_N;~\pi_0^{(i)}(\alpha_1^{(i)}) < \pi_0^{(i)}(a) < \pi_0^{(i)}(\alpha_0^{(i)}),~\pi_0^{(i+1)}(a)=\pi_0^{(i)}(a)+1$.
			\end{itemize}
		\end{enumerate}
		Define the new graphs $G_+^{(i+1)}$ and $G_-^{(i+1)}$ and note that conditions (C $1$), (C $2$), (C $3.1$) and (C $3.2$) are automatically satisfied.
		Stop if conditions (C $3.3$) and (C $4$) are not both satisfied. Set $i\leftarrow i+1$ and go back to step $(5)$.
	\end{enumerate}

\appendix
\section{Identifying the singularities of an $A_N$-positive primitive automorphism}\label{appendix:thebase}

The article \cite{Jul} gives an algorithm to find the singularities of an $A_N$-positive primitive automorphism.
The main tool of the algorithm is a careful study of the prefix-suffix representation of an attracting subshift.
The reader is referred fo \cite{CanSie} for results on the prefix-suffix representation. We briefly recall here the
slightly adapted version of \cite{Jul}.

Let $\varphi$ be an $A_N$-positive primitive automorphism and $\Sv$ its attracting subshift.
The \textbf{prefix-suffix automaton} of $\varphi$ is defined as follows:
\begin{itemize}
	\item $A_N$ is its set of vertices,
	\item $P = \{(p, a, s)\in A_N^*\times A_N\times A_N^*; \exists b\in A_N; \varphi(b) = pas\}$ (where $A_N^*$
	is the set of words of $F_N$ with letters in $A_N$ and $\epsilon\in A_N^*$) is the set of labels,
	\item there is an edge labeled $(p, a, s)$ from $a$ to $b$ if and only if $\varphi(b) = pas$.
\end{itemize}
An example is given on figure \ref{fig:apsex}.
\begin{figure}[h!]
\begin{center}
	\scalebox{0.5}{\includegraphics{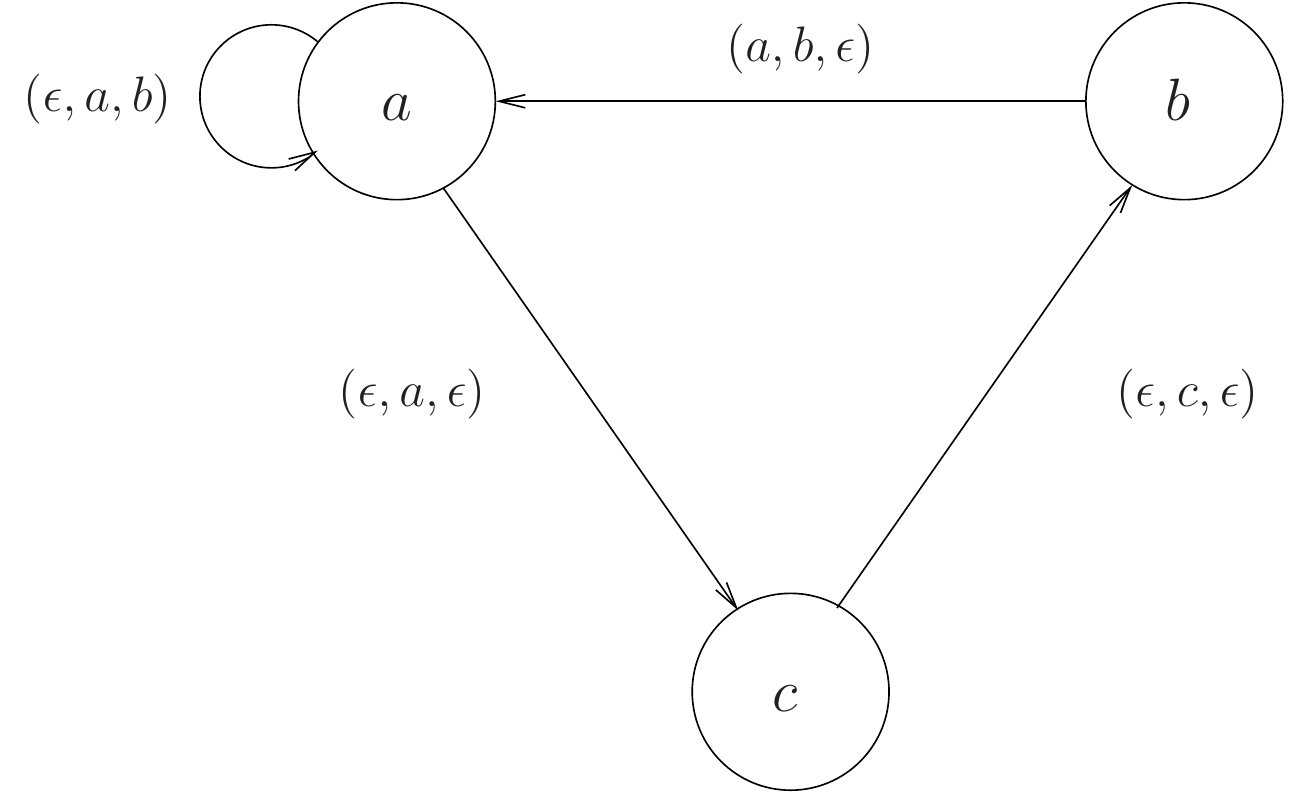}}
\end{center}
\caption{Prefix-suffix automaton associated to $\phi : a\mapsto ab, b\mapsto c, c\mapsto a$.}
\label{fig:apsex}
\end{figure}

The set of sequences of labels of infinite walks in this automaton is denoted $D$; it is the set of admissible developments.
\begin{prop}\label{prop:ref}
If $(p_i, a_i, s_i)_{i\ge 0}$ is in $D$, then for all $n\in \mathds{N}$, $\varphi(a_{n+1}) = p_na_ns_n$.
\end{prop}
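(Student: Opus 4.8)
The plan is to obtain the identity by directly unwinding the definition of an admissible development in terms of walks in the prefix-suffix automaton. Recall that a sequence $(p_i,a_i,s_i)_{i\ge 0}$ belongs to $D$ exactly when there is an infinite walk $v_0\xrightarrow{e_0}v_1\xrightarrow{e_1}v_2\to\cdots$ in the automaton whose $i$-th edge $e_i$ carries the label $(p_i,a_i,s_i)$. So the whole argument amounts to reading off what this walk structure imposes on the triples.

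First I would analyze a single edge. By the definition of the automaton, an edge labeled $(p,a,s)$ runs from the vertex $a$ (the middle coordinate of the label) to the unique vertex $b$ satisfying $\varphi(b)=pas$. Applied to $e_i$, this says the tail of $e_i$ is $v_i=a_i$, and that its head $v_{i+1}$ satisfies $\varphi(v_{i+1})=p_ia_is_i$, for every $i\ge 0$. Then I would invoke the only remaining constraint, namely that in a walk the head of $e_i$ coincides with the tail of $e_{i+1}$; since the tail of $e_{i+1}$ is $a_{i+1}$, this forces $v_{i+1}=a_{i+1}$, whence $\varphi(a_{i+1})=\varphi(v_{i+1})=p_ia_is_i$. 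Setting $i=n$ gives the claim.

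There is essentially no obstacle beyond bookkeeping: the one thing to get right is the indexing convention, that is, that the middle coordinate of a label names the tail (source) of the corresponding edge rather than its head, and that the apparent shift in $\varphi(a_{n+1})=p_na_ns_n$ is precisely the discrepancy between ``$e_n$ leaves vertex $a_n$'' and ``$e_n$ arrives at vertex $a_{n+1}$''. No use is made of positivity, primitivity, or minimality; the statement is a pure consequence of the way the automaton is defined.
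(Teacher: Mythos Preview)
Your argument is correct and is exactly the intended one: the paper does not supply a separate proof for this proposition, treating it as an immediate consequence of the definition of the prefix-suffix automaton. Your unwinding of that definition (edge labeled $(p,a,s)$ has source $a$ and target $b$ with $\varphi(b)=pas$, and consecutive edges in a walk share a vertex) is precisely what is required.
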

In \cite{CanSie}, the authors define a map
$\rho_{\varphi}:\Sv\to D$ which gives a representation of the attracting subshift's structure.
If $W = (U, V)$ is a point of $\Sv$, then $\rho_{\varphi}(W) = (p_i, a_i, s_i)_{i\ge 0}$ is called
the \textbf{prefix-suffix development} of $W$, and it is such that
\begin{itemize}
	\item if $(s_i)_{i\in \mathds{N}}$ is not eventually constant equal to $\epsilon$,
	then $V = \lim\limits_{n\to +\infty} a_0s_0\varphi(s_1)\dots \varphi^n(s_n)$,
	\item if $(p_i)_{i\in \mathds{N}}$ is not eventually constant equal to $\epsilon$,
	then $U = \lim\limits_{n\to +\infty} p_0^{-1}\varphi(p_1^{-1})\dots \varphi^n(p_n^{-1})$.
\end{itemize}
Those developments whose prefix or suffix sequence end up being constant equal to $\epsilon$ are identified in \cite{CanSie}.
\begin{thm}[\cite{CanSie}]
The map $\rho_{\varphi}$ is continuous and onto. Any development $d\in D$ has at most $N$ pre-images.
\end{thm}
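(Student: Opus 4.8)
This is the theorem of \cite{CanSie}; I would recover it as follows. Everything rests on recognizability of primitive aperiodic substitutions (Moss\'e's theorem, in the form used in \cite{CanSie} for the action on $F_N$): there is a constant $L$, and for each $n$ a constant $L_n$, such that the first $n$ triples $\rho_{\varphi}(W)_0,\dots,\rho_{\varphi}(W)_{n-1}$ of a point $W\in\Sv$ are already determined by the restriction of $W$ to the window of radius $L_n$ around the origin. Concretely, $\rho_{\varphi}$ intertwines the desubstitution map on $\Sv$ with the edge shift on $D$, and recognizability says the level-$0$ triple is a locally constant function of $W$; one then iterates. Continuity is then immediate: if two points of $\Sv$ agree on the window of radius $L_n$, their developments agree on the first $n$ triples, hence are close in the product topology of $D$.

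For surjectivity, fix $d=(p_i,a_i,s_i)_{i\ge0}\in D$. By Proposition~\ref{prop:ref} one has $\varphi^n(a_n)=P_n\,a_0\,S_n$ with $P_n=\varphi^{n-1}(p_{n-1})\cdots\varphi(p_1)p_0$ and $S_n=s_0\varphi(s_1)\cdots\varphi^{n-1}(s_{n-1})$, and moreover $P_{n+1}=\varphi^n(p_n)P_n$ and $S_{n+1}=S_n\varphi^n(s_n)$, so these words grow monotonically to the left, resp. to the right. By primitivity the block $\varphi^n(a_n)$ occurs in $\varphi^{m}(c)$ for every $c\in A_N$ and every sufficiently large $m$, so there is a point $W^{(n)}\in\Sv$ containing $P_n a_0 S_n$ with its origin placed at the marked occurrence of $a_0$. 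Any accumulation point $W$ of $(W^{(n)})_n$ (compactness of $\Sv$) contains $P_n a_0 S_n$ around the origin for every $n$; using recognizability of $\varphi^n$ one checks that the first $n$ triples of $\rho_{\varphi}(W)$ are exactly $(p_0,a_0,s_0),\dots,(p_{n-1},a_{n-1},s_{n-1})$, whence $\rho_{\varphi}(W)=d$.

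The fibre bound is the crux. By primitivity the sequences $(p_i)_i$ and $(s_i)_i$ cannot both be eventually $\epsilon$ (that would force $\varphi$ to send a letter to a single letter, impossible for a primitive automorphism with $N\ge2$). If neither is eventually $\epsilon$, both limits defining $U$ and $V$ converge, so $W=(U,V)$ is uniquely recovered from $d$ and the fibre is a singleton. Otherwise, say $(s_i)_i$ is eventually $\epsilon$; a short consistency check (Proposition~\ref{prop:ref} together with finiteness of $A_N$) shows $d$ is then eventually equal to a single constant triple $(\bar p,b,\epsilon)$ with $\varphi(b)=\bar p\,b$, so the desubstituted points realizing $d$ all share the same, $\varphi$-fixed, left half $\cdots\varphi^2(\bar p)\varphi(\bar p)\bar p\,b$. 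In particular every pre-image of $d$ is, up to the shift bookkeeping, a point of $\Sv$ fixed by $\partial^2\varphi$ with a prescribed first coordinate, and its free (right) half is a right-infinite word $R$ that must be infinitely $\varphi$-desubstitutable with the origin always at the left end of a $\varphi$-block. The plan is to show such an $R$ is determined by its first letter, so that the fibre injects into $A_N$ and has at most $N$ elements; the hard part is exactly this --- excluding, for a fixed admissible first letter, two distinct compatible ``prefix-chains'' $R=\lim_j\varphi^j(c_j)$ inside $\Sv$. (Alternatively one appeals to the finiteness of the set of $\partial^2\varphi$-fixed points of $\Sv$ --- controlled by the index bound $\le N-1$ of \cite{GJLL} --- and checks directly that the relevant count never exceeds $N$.)
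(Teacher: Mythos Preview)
The paper does not prove this statement; it is quoted from \cite{CanSie} without argument, so there is no in-paper proof to compare your sketch against. I can only assess the sketch on its own merits.

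Your arguments for continuity (via Moss\'e recognizability) and for surjectivity (nested blocks plus compactness) are the standard ones and are sound.

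The fibre bound, however, has a genuine error and an admitted gap. The error: your claim that an eventually-$\epsilon$ suffix sequence forces $d$ to be eventually \emph{constant} at a triple $(\bar p,b,\epsilon)$ with $\varphi(b)=\bar p b$ is false. Take the Fibonacci automorphism $\varphi(a)=ab$, $\varphi(b)=a$, which is $A_2$-positive and primitive. The edges of its prefix--suffix automaton with empty suffix are $(a,b,\epsilon)$ (from $b$ to $a$) and $(\epsilon,a,\epsilon)$ (from $a$ to $b$); any infinite path with $s_i=\epsilon$ for all large $i$ therefore alternates between these two triples and is eventually \emph{periodic} of period $2$, not eventually constant. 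The correct statement is eventual periodicity, after which one either replaces $\varphi$ by a power (to reduce to the constant case) or argues directly with the period.

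The gap is the one you yourself flag: even after reducing to the eventually-periodic tail, you still owe the injection of the fibre into a set of size at most $N$. In \cite{CanSie} this is done by showing that, in the degenerate case, the undetermined half of $W$ is a one-sided $\partial\varphi$-periodic point whose initial letter must lie in a prescribed subset of $A_N$, and that this initial letter determines the whole half; this is where the bound $N$ comes from. Your parenthetical alternative via the index bound of \cite{GJLL} does not close the gap either: that bound concerns a different count (equivalence classes of singular directions, yielding $N-1$), not the cardinality of a single $\rho_\varphi$-fibre, and invoking it here would in any case be heavier machinery than the elementary combinatorics actually needed.
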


We now recall the relevant results of \cite{Jul}. We still consider $\varphi$ is an $A_N$-positive primitive automorphism
and $\Sv$ is its attracting subshift. Observe that for any positive integer $k$, the automorphism $\varphi^k$
is also $A_N$-positive and primitive and its attracting subshift $\Sigma_{\varphi^k}$ verifies
$\Sigma_{\varphi^k} = \Sv$. For any $k\ge 1$,
let $\gmk$ and $\gpk$ be the $F_N$ to $F_N$ maps defined, for every $u=u_0u_1\dots u_p$ in $F_N$, by
\begin{itemize}
	\item $\gmk(u) = \varphi^k(u_p)u_0u_1\dots u_{p-1}$,
	\item $\gpk(u) = u_1\dots u_{p-1}u_p\varphi^k(u_0)$.
\end{itemize}
The search for $\varphi$-singularities is based on the following result.
\begin{thm}[\cite{Jul}]\label{thm:mainreference}
Let $W$ and $W'$ be two distinct points of $\Sv$ with $\rho_{\varphi^k}(W) = (p, a, s)*$
and $\rho_{\varphi^k}(W') = (q, b, r)*$ (where $\rho_{\varphi^k}$ is the prefix-suffix development
map of $\varphi^k$ and the symbol $*$ indicates the triplet is repeated indefinitely).
\begin{itemize}
	\item If for any $i,j\in \mathds{N}$, we have $\gmk^i(p)\ne \gmk^j(q)$ (resp. $\gpk^i(s)\ne \gpk^j(r)$),
	then for any $i,j\in \mathds{N}$, the points $S^{-i}(W)$ and $S^{-j}(W')$ (resp. $S^{i+1}(W)$ and $S^{j+1}(W')$)
	do not belong to a common singularity.
	\item If $i$ and $j$ are the smallest integers such that $\gmk^i(p) = \gmk^j(q)$ (resp. $\gpk^{i}(s) = \gpk^{j}(r)$),
	then $S^{-i}(W)$ and $S^{-j}(W')$ (resp. $S^{i+1}(W)$ and $S^{j+1}(W')$) belong to the same singularity $\Omega$.
	Moreover, the singularity $\Omega$ is fixed by $(i_w\circ \varphi^k)^h$
	for some integer $h\ge 1$ and $w = \gmk^i(p)$ (resp. $w^{-1} = \gpk^{i}(s)$).
\end{itemize}
Conversely, if $W_{(0)}$ is a point of $\Sv$ belonging to a singularity $\Omega$, then
there exists $W_{(1)}\in \Omega$, an integer $k\le 4N-4$ and two points $W$ and $W'$ of $\Sv$ such that
\begin{itemize}
	\item $\rho_{\varphi^k}(W) = (p, a, s)*$, $\rho_{\varphi^k}(W') = (q, b, r)*$,
	\item $\gmk^i(p) = \gmk^j(q)$ (resp. $\gpk^i(s) = \gpk^j(r)$) for some integers $i,j\ge 0$,
	\item $W_{(0)} = S^{-i}(W)$ and $W_{(1)} = S^{-j}(W')$ (resp. $W_{(0)} = S^{i+1}(W)$ and $W_{(1)} = S^{j+1}(W')$).
\end{itemize}
\end{thm}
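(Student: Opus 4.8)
The plan is to reduce the statement to a single computation. For a point $W=(U,V)\in\Sv$ whose prefix-suffix development for $\varphi^k$ is the constant sequence $(p,a,s)*$, the left half $U$ is \emph{exactly} the infinite word determined by the word $p$, shifting $W$ once to the left replaces $p$ by $\gmk(p)$, and the mirror statement holds on the right with $s$ and $\gpk$; the theorem then becomes a bookkeeping statement about when two $\gmk$-orbits, or two $\gpk$-orbits, collide. First one records that $\rho_{\varphi^k}(W)=(p,a,s)*$ forces $\varphi^k(a)=pas$ by Proposition~\ref{prop:ref}, whence $V=\lim_n as\varphi^k(s)\cdots\varphi^{nk}(s)$ and $U=\lim_n p^{-1}\varphi^k(p^{-1})\cdots\varphi^{nk}(p^{-1})$; a direct reduced-word computation then gives $\varphi^k(U)=pU$ and $\varphi^k(V)=pV$, i.e. $\partial^2(i_p\circ\varphi^k)(W)=W$. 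Since $U$ determines $p$ through $\varphi^k(U)=pU$ and conversely, two such points share their first coordinate if and only if their first prefixes coincide, and likewise on the right.

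Next one computes the effect of the shift. From the definitions, $S(U,V)=(V_0^{-1}U,V_0^{-1}V)$, while $S^{-1}(U,V)=(U',V')$ with $U'$ equal to $U$ minus its first letter and $V'=U_0^{-1}V$. Substituting $U=p^{-1}\varphi^k(p^{-1})\cdots$ and using $\varphi^k(a)=pas$, one checks that $S^{-1}(W)$ is fixed by $\partial^2(i_{\gmk(p)}\circ\varphi^k)$ and that its first coordinate is again the word determined by the prefix $\gmk(p)$; by induction $S^{-i}(W)$ is fixed by $\partial^2(i_{w_i}\circ\varphi^k)$ with $w_i=\gmk^i(p)$, and its first coordinate depends on $w_i$ alone. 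The mirror computation ---now the letter $a$ is consumed by the first forward shift--- shows that $S^{i+1}(W)$ is fixed by $\partial^2(i_w\circ\varphi^k)$ with $w^{-1}=\gpk^i(s)$ and that its second coordinate depends only on $\gpk^i(s)$.

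The first two bullets follow from this. A point that lies in a singularity shares one of its coordinates with the other points of that singularity (this is built into the definition; for $w\neq\epsilon$ it is the result of \cite{Jul} recalled above), and for backward shifts of periodic-development points the only coordinate that can be shared is the first one ---the coordinate that the shift progressively lengthens. By the previous paragraph the first coordinates of $S^{-i}(W)$ and $S^{-j}(W')$ coincide precisely when $\gmk^i(p)=\gmk^j(q)$; so if the two $\gmk$-orbits never meet, these points are never in a common singularity. If $i,j$ are minimal with $\gmk^i(p)=\gmk^j(q)=:w$, then $S^{-i}(W)$ and $S^{-j}(W')$ have the same first coordinate $U^{\ast}$, and minimality forces their second coordinates to begin with different letters (otherwise one further forward shift would produce an earlier collision of the $\gmk$-orbits). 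Hence the set of points of $\Sv$ with first coordinate $U^{\ast}$ fixed by a power of $i_w\circ\varphi^k$ is a singularity $\Omega$ containing both, and it is fixed by $(i_w\circ\varphi^k)^h$ where $h$ is the order of the (finite) permutation induced on $\Omega$. The $\gpk$ statement is symmetric, with forward shifts and second coordinates.

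For the converse, take $W_{(0)}\in\Omega$ and suppose the common coordinate of $\Omega$ is the first one, $U^{\ast}$ (the other case is symmetric). As $\Omega$ is fixed by some $i_v\circ\varphi^{k'}$ we have $\varphi^{k'}(U^{\ast})=vU^{\ast}$, so the prefix sequence of $\rho_{\varphi^{k'}}(W_{(0)})$ is eventually periodic; passing to a power of $\varphi$ and invoking the finiteness and explicit bounds of \cite{GJLL} on the index (hence on the number of singularities and of points in them), a counting argument bounds the pre-period and the period and yields an integer $k\le 4N-4$, an integer $i\ge 0$, and a point $W=S^i(W_{(0)})$ whose $\varphi^k$-development is purely periodic. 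Choosing a second point $W_{(1)}\in\Omega$ whose second coordinate begins with a different letter and running the same procedure gives $W'$ and $j\ge 0$ with $W_{(1)}=S^{-j}(W')$, and the equality of the first coordinates of $W_{(0)}$ and $W_{(1)}$ becomes $\gmk^i(p)=\gmk^j(q)$ by the shift computation. I expect this last step to be the main obstacle: showing that the desubstitution of a singular point stabilises into a period after a bounded number of iterations, and pinning the constant to $4N-4$, is where the finiteness from \cite{GJLL} must be played against the combinatorics of the prefix-suffix automaton; the earlier steps are, by contrast, essentially forced once the $\gmk$ and $\gpk$ formulas are set up.
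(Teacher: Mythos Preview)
This theorem is not proved in the paper. It is quoted from \cite{Jul} in the appendix as background for the singularity-finding procedure, and the paper gives no argument for it. There is therefore nothing in the paper to compare your proof against.

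That said, a few comments on your sketch. The computational backbone is correct and is indeed the heart of the matter: if $\rho_{\varphi^k}(W)=(p,a,s)*$ with $p\neq\epsilon$, then $W$ is fixed by $\partial^2(i_p\circ\varphi^k)$, and one checks directly that $S^{-1}(W)$ is fixed by $\partial^2(i_{\gmk(p)}\circ\varphi^k)$, so by induction $S^{-i}(W)$ is fixed by $\partial^2(i_{\gmk^i(p)}\circ\varphi^k)$ and its first coordinate is determined by $\gmk^i(p)$ alone. The forward/$\gpk$ side is symmetric.

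However, the deduction ``common singularity $\Rightarrow$ shared first coordinate'' is where your argument is thin. You invoke the fact (from \cite{Jul}, recalled in section~\ref{subsec:autsing}) that points of a singularity fixed by $i_w\circ\varphi^k$ with $w\neq\epsilon$ share a coordinate; but (i) the definition of singularity in this paper does \emph{not} force a shared coordinate in general---the example $\phi:a\mapsto abcad,\dots$ in section~\ref{subsec:autsing} is given precisely to show this can fail when $w=\epsilon$; (ii) even granting $w\neq\epsilon$, you still need to argue that the shared coordinate is the \emph{first} one for backward shifts (your parenthetical ``the coordinate that the shift progressively lengthens'' is suggestive but not a proof---one must rule out that $S^{-i}(W)$ and $S^{-j}(W')$ share their second coordinate, which amounts to a tail-equality statement for the $V$-parts); and (iii) the degenerate case $p=\epsilon$ or $q=\epsilon$, where the development has several preimages and the limit formula for $U$ breaks down, is not addressed.

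For the converse you correctly identify the crux: showing that the prefix-suffix development of a singular point becomes purely periodic after boundedly many desubstitutions, with the explicit bound $k\le 4N-4$. Your appeal to the index bound of \cite{GJLL} points in the right direction, but turning that finiteness into the precise constant $4N-4$ is exactly the content of the cited paper \cite{Jul} and is not something one can reconstruct from the material in the present paper.
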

Keeping the notation of the theorem above, note that it is possible for $S^{-i}(W)$ and $S^{-j}(W')$
(resp. $S^{i+1}(W)$ and $S^{j+1}(W')$) to
belong to the same singularity even if $i$ and $j$ are not the smallest integers for which
$\gmk^i(p) = \gmk^j(q)$ (resp. $\gpk^{i}(s) = \gpk^{j}(r)$).
This is typical of singularities containing two distinct points $(U, V)$ and $(U', V')$ with both $U_0 = U_0'$ and $V_0=V_0'$
(an example is given in \cite{Jul}).

Recall that different points may have the same prefix-suffix development.
As theorem \ref{thm:mainreference} only take the prefix-suffix development
into consideration, if using theorem \ref{thm:mainreference} tells us $S^m(W)$ and $S^n(W')$ belong to $\Omega$, then
we also know that for any point $W_{(0)}$
(resp. $W_{(0)}'$) such that $\rho_{\varphi^k}(W) = \rho_{\varphi^k}(W_{(0)})$ (resp. $\rho_{\varphi^k}(W') = \rho_{\varphi^k}(W_{(0)}')$),
the point $S^m(W_{(0)})$ (resp. $S^n(W_{(0)}')$) belongs to $\Omega$.\\

The algorithm to find $\varphi$-singularities simply consists, for each $1\le k\le 4N-4$,
in running pairs $((p, a, s)*, (q, b, r)*)$ of constant (with respect to $\rho_{\varphi^k}$) prefix-suffix developments
through theorem \ref{thm:mainreference}. It is obvious from proposition \ref{prop:ref}
that there is a finite number of such developments. Moreover, it is explained in \cite{Jul}
how properties of $\varphi^{-k}$ can be used to bound the minimal integers $i,j$ such that
$\gmk^i(p) = \gmk^j(q)$ (resp. $\gpk^{i}(s) = \gpk^{j}(r)$), ensuring the algorithm
will end after a finite number of steps.

Note that it may not be necessary to sweep through all constant prefix-suffix developments pairs,
as the overall number of points contained in singularities is bounded (see \cite{GJLL}, \cite{Jul}).
This bound is reached for CIETs.

\begin{rmk} One may be interested in finding the $\varphi$-singularities when
$\varphi$ is the $\delta$-automorphism of some CIET $\delta$. From proposition \ref{prop:invsing},
we only need to study developments that are constant for $\rho_\varphi$.
\end{rmk}

\section{An example}\label{appendix:example}

Let $\psi$ be the $\{a, b, c, d\}$-positive primitive automorphism defined by
\begin{center}
	\begin{tabular}{ccccl}
		$\psi$ & $:$ & $a$ & $\mapsto$ & $bdacda$\\
		&& $b$ & $\mapsto$ & $bdbda$\\
		&& $c$ & $\mapsto$ & $ccda$\\
		&& $d$ & $\mapsto$ & $cda$
	\end{tabular}
\end{center}
There are a lot of obvious pairs yielding $\psi$-singularities. We get:
\begin{itemize}
	\item $\Omega_0 = \{W_{(0)}, W_{(0)}'\}$ with $\rho_\psi(W_{(0)}) = (\epsilon, b, dbda)*$ and $\rho_\psi(W_{(0)}') = (\epsilon, c, cda)*$,
	\item $\Omega_1 = \{W_{(1)}, W_{(1)}'\}$ with $\rho_\psi(W_{(1)}) = (bd, a, cda)*$ and $\rho_\psi(W_{(1)}') = (bd, b, da)*$,
	\item $\Omega_2 = \{W_{(2)}, W_{(2)}'\}$ with $\rho_\psi(W_{(2)}) = (c, c, da)*$ and $\rho_\psi(W_{(2)}') = (c, d, a)*$,
	\item $\Omega_3 = \{W_{(3)}, W_{(3)}'\}$ with $\rho_\psi(S^{-1}(W_{(3)})) = (bd, a, cda)*$ and $\rho_\psi(S^{-1}(W_{(3)}')) = (\epsilon, c, cda)*$,
	\item $\Omega_4 = \{W_{(4)}, W_{(4)}'\}$ with $\rho_\psi(S^{-1}(W_{(4)})) = (bd, b, da)*$ and $\rho_\psi(S^{-1}(W_{(4)}')) = (c, c, da)*$.
\end{itemize}
Also, observe $\gamma_{\psi_+}(a) = \gamma_{\psi_+}(dbda)$, and deduce there is a singularity $\Omega_5 = \{W_{(5)}, W_{(5)}'\}$ with
$\rho_\psi(S^{-2}(W_{(5)})) = (c, d, a)*$ and $\rho_\psi(S^{-2}(W_{(5)}')) = (\epsilon, b, dbda)*$.

\begin{rmk}
Define $U = \lim\limits_{n\to +\infty} \psi^n(a^{-1})$. The point $U$ is the first coordinate of the points $W_{(0)}$, $W_{(0)}'$, $S^{-1}(W_{(3)}')$ and $S^{-2}(W_{(5)}')$.
All the other coordinates are explicitly given by the prefix-suffix developments.
\end{rmk}

The forward and backward graphs are given on figures \ref{fig:forward1} and \ref{fig:backward1}.
\begin{figure}[h!]
\begin{center}
	\scalebox{0.5}{\includegraphics{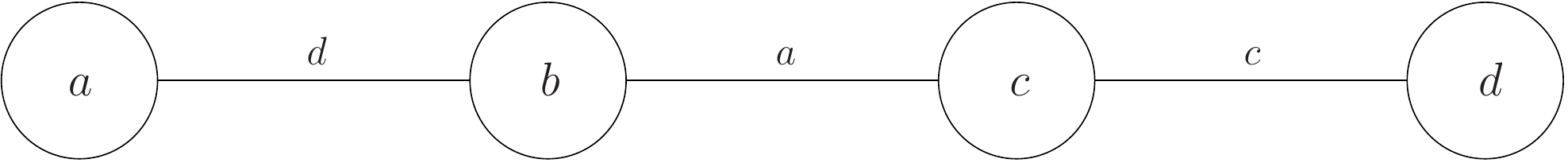}}
\end{center}
\caption{First forward graph.}
\label{fig:forward1}
\end{figure}
\begin{figure}[h!]
\begin{center}
	\scalebox{0.5}{\includegraphics{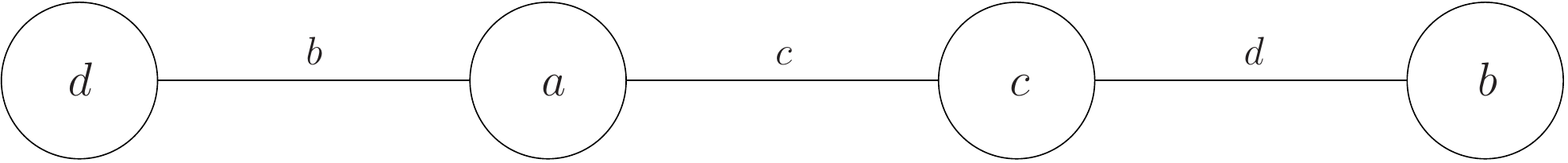}}
\end{center}
\caption{First backward graph.}
\label{fig:backward1}
\end{figure}

Using step $(2.1)$, we can define the pair $\pi = (\pi_0, \pi_1)$:
\begin{center}
	\begin{tabular}{ccccccccc}
		$\pi_0:$ & $a$ & $\mapsto$ & $0$ & \hspace{2cm} & $\pi_1:$ & $a$ & $\mapsto$ & $1$\\
		& $b$ & $\mapsto$ & $1$ &&& $b$ & $\mapsto$ & $3$\\
		& $c$ & $\mapsto$ & $2$ &&& $c$ & $\mapsto$ & $2$\\
		& $d$ & $\mapsto$ & $3$ &&& $d$ & $\mapsto$ & $0$
	\end{tabular}
\end{center}
Let $(U, V)$ be the point of $\Omega_5$ such that $U_0 = a^{-1}$; define $Z = a(U, V)$. The singularity $\Omega_5$ is fixed by $i_{(bdacda)^{-1}}\circ \psi$,
and we deduce $Z$ is fixed by $\partial^2 (i_{a^{-1}}\circ i_{(bdacda)^{-1}}\circ \psi\circ i_a)$. Define
$\varphi = i_{a^{-1}}\circ i_{(bdacda)^{-1}}\circ \psi\circ i_a = i_{\psi(a)(abdacda)^{-1}}\circ \psi = i_{a^{-1}}\circ \psi$. We obtain the
$\{a, b, c, d\}$-positive automorphism
\begin{center}
	\begin{tabular}{ccccl}
		$\varphi$ & $:$ & $a$ & $\mapsto$ & $abdacd$\\
		&& $b$ & $\mapsto$ & $abdbd$\\
		&& $c$ & $\mapsto$ & $accd$\\
		&& $d$ & $\mapsto$ & $acd$
	\end{tabular}
\end{center}
Note that, in this case, working with $\pi' = (\pi_0', \pi_1')$ defined by $\pi_0' = 3-\pi_0(a_0)$ and $\pi_1' = 3-\pi_1(a_0)$
for any $a_0\in \{a, b, c, d\}$ would not have provided us with an $\{a, b, c, d\}$-positive automorphism.

From step $(5)$, we define $\sigma_0(b)=bd$ and $\sigma_0(a_0)=a_0$ if $a_0\ne b$.
We obtain
\begin{center}
	\begin{tabular}{ccccl}
		$\sigma_0^{-1}\circ \varphi$ & $:$ & $a$ & $\mapsto$ & $abacd$\\
		&& $b$ & $\mapsto$ & $abb$\\
		&& $c$ & $\mapsto$ & $accd$\\
		&& $d$ & $\mapsto$ & $acd$
	\end{tabular}
\end{center}
which is still $A_N$-positive, and continue on to step $(7)$. Obviously, we only need the first few letters
of the coordinates of the points contained in the singularities to move on. Following from step $(7)$
(in this case $(7.1)$), we obtain the new graphs of figures \ref{fig:forward2} and \ref{fig:backward2}.
\begin{figure}[h!]
\begin{center}
	\scalebox{0.5}{\includegraphics{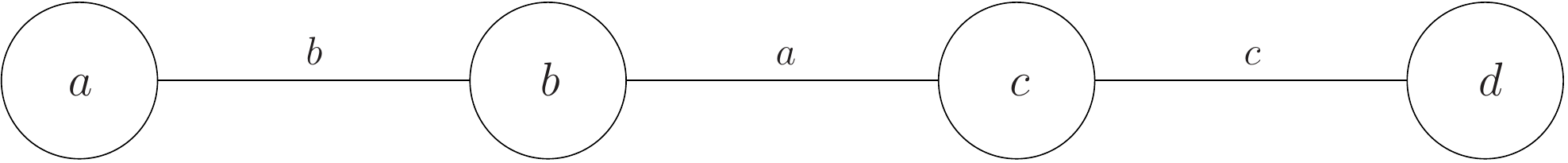}}
\end{center}
\caption{Second forward graph.}
\label{fig:forward2}
\end{figure}
\begin{figure}[h!]
\begin{center}
	\scalebox{0.5}{\includegraphics{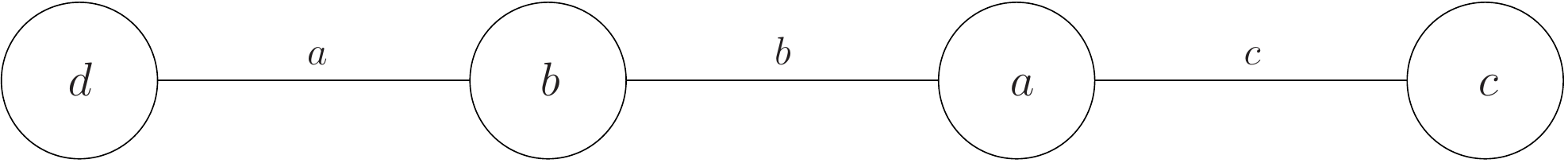}}
\end{center}
\caption{Second backward graph.}
\label{fig:backward2}
\end{figure}

This allows us (step $(5)$) to define $\sigma_1(d)=cd$ and $\sigma_1(a_0)=a_0$ if $a_0\ne d$,
and we obtain
\begin{center}
	\begin{tabular}{ccccl}
		$\sigma_1^{-1}\circ \sigma_0^{-1}\circ \varphi$ & $:$ & $a$ & $\mapsto$ & $abad$\\
		&& $b$ & $\mapsto$ & $abb$\\
		&& $c$ & $\mapsto$ & $acd$\\
		&& $d$ & $\mapsto$ & $ad$
	\end{tabular}
\end{center}
which is again $A_N$-positive.

The algorithm cycles another six times. One may check that we obtain
\begin{center}
	\begin{tabular}{clcclccl}
		$\sigma_2:$ & $a\mapsto a$ & \hspace{6mm} & $\sigma_3:$ & $a\mapsto a$ & \hspace{6mm} & $\sigma_4:$ & $a\mapsto a$\\
		& $b\mapsto b$ & & & $b\mapsto b$ & & & $b\mapsto b$\\
		& $c\mapsto cd$ & & & $c\mapsto c$ & & & $c\mapsto ac$\\
		& $d\mapsto d$ & & & $d\mapsto ad$ & & & $d\mapsto d$\\ \\
		$\sigma_5:$ & $a\mapsto ab$ & & $\sigma_6:$ & $a\mapsto a$ & & $\sigma_7:$ & $a\mapsto ad$\\
		& $b\mapsto b$ & & & $b\mapsto ab$ & & & $b\mapsto b$\\
		& $c\mapsto c$ & & & $c\mapsto c$ & & & $c\mapsto c$\\
		& $d\mapsto d$ & & & $d\mapsto d$ & & & $d\mapsto d$
	\end{tabular}
\end{center}
and $\varphi = \sigma_0\circ \sigma_1\circ \sigma_2\circ \sigma_3\circ \sigma_4\circ \sigma_5\circ \sigma_6\circ \sigma_7$.
Define the incidence matrix
\begin{center}
	$M = \begin{bmatrix}2&1&1&1\\1&2&0&0\\1&0&2&1\\2&2&1&1\end{bmatrix}$
\end{center}
of $\psi$, its dominant eigenvalue $\eta$ and choose a positive eigenvector $\lambda$ associated to $\eta$.
The CIET $\delta = (\pi, \lambda)$ satisfies $\Sd = \Spsi$, and $\varphi$ is the $\delta$-automorphism.

\bibliography{bibli}{}
\bibliographystyle{alpha}

\end{document}